\definecolor{green}{RGB}{0,127,0}
\definecolor{red}{RGB}{191,0,0}
\tikzset{
	arrow/.pic={\draw[every arrow/.try,->,>=#1] (0,0) -- +(.1pt,0);},
	pics/arrow/.default={triangle 90}
}
\theoremstyle{plain}
\newtheorem{lemma}{Lemma}[section]
\newtheorem{theorem}[lemma]{Theorem}
\newtheorem{corollary}[lemma]{Corollary}
\newtheorem{proposition}[lemma]{Proposition}
\newtheorem{problem}[lemma]{Problem}
\newtheorem{conjecture}[lemma]{Conjecture}
\theoremstyle{remark}
\newtheorem{remark}[lemma]{Remark}
\newtheorem*{example}{Example}
\newtheorem{definition}[lemma]{Definition}
\newcommand{\svdots}{%
	\vbox{
		\scriptsize \baselineskip 2pt \lineskiplimit 0pt
		\hbox {.}\hbox {.}\hbox {.}\kern-0.75pt
	}%
}
\newcommand{\shdots}{%
	\hbox{
		\scriptsize \baselineskip 20pt \lineskiplimit 2pt
		\hbox {.}\hbox {.}\hbox {.}\kern2.95pt
	}%
}
\DeclareMathOperator{\LLT}{LLT}
\newcommand{\A}{\mathcal{A}}
\newcommand{\N}{\mathbb{N}}
\newcommand{\I}{\mathcal{I}}
\newcommand{\Z}{\mathbb{Z}}
\newcommand{\QQ}{\mathbb{Q}}
\newcommand{\xx}{\bm{x}}
\newcommand{\kumu}{\kappa}
\newcommand{\PPP}{\mathcal{P}}
\DeclareMathOperator{\RSST}{RSST}
\DeclareMathOperator{\PF}{PF}
\def\la{\lambda}
\def\ka{\kappa}
\DeclareMathOperator{\Tu}{Tutte}
\DeclareMathOperator{\cc}{cc}
\newcommand{\lla}{\bm{\la}}
\newcommand{\nnu}{\bm{\nu}}
\DeclareMathOperator{\SYT}{SYT}
\DeclareMathOperator{\SSYT}{SSYT}
\DeclareMathOperator{\id}{id}
\DeclareMathOperator{\inv}{inv}
\DeclareMathOperator{\Inv}{Inv}
\DeclareMathOperator{\maj}{maj}
\DeclareMathOperator{\sh}{sh}
\DeclareMathOperator{\Des}{Des}
\DeclareMathOperator{\cospin}{cospin}
\DeclareMathOperator{\Mac}{Mac}
\DeclareMathOperator{\QSym}{QSym}
\def\uu{\bm{u}}
\author[M.~Dołęga]{Maciej Dołęga}
\address{
Institute of Mathematics, 
Polish Academy of Sciences, 
ul. Śniadeckich 8, 
00-956 Warszawa, Poland}
\email{mdolega@impan.pl}
\author[M.~Kowalski]{Maciej Kowalski}
\address{
Institute of Mathematics, 
Polish Academy of Sciences, 
ul. Śniadeckich 8, 
00-956 Warszawa, Poland}
\email{mkowalski@impan.pl}
 \thanks{
The authors are supported by {\it Narodowe Centrum Nauki}, grant UMO-2017/26/D/ST1/00186.}
\title [LLT cumulants and graph coloring]{LLT cumulants and graph coloring}
\begin{document}

\maketitle

\begin{abstract}
	The purpose of this note is to introduce a new family of
	quasi-symmetric functions called \emph{LLT
	cumulants} and discuss its properties. We define LLT cumulants using the algebraic
	framework for conditional cumulants and we prove that the Macdonald
	cumulant has an explicit positive expansion in terms of LLT cumulants of ribbon shapes,
	generalizing the classical decomposition of Macdonald polynomials. We also find a natural combinatorial interpretation of
	the LLT cumulant of a given directed graph as a weighted generating
	function of colorings of its subgraphs.
	
	We use this graph theoretical framework to prove
	various positivity results. This includes monomial positivity,
	positivity in fundamental quasisymmetric functions and related
	positivity of the coefficients of Schur polynomials indexed by hook
	shapes. We also prove $e$-positivity for vertical-shape LLT cumulants, after the shift of
	variable $q \to q+1$, which refines a recent result of Alexandersson
	and Sulzgruber. All these results give evidence towards
	Schur-positivity of LLT cumulants, which we conjecture here. We prove that this conjecture implies
	Schur-positivity of Macdonald cumulants, and we give more evidence by
	proving the conjecture for LLT cumulants of melting lollipops that
        refines a recent result of Huh, Nam and Yoo.
\end{abstract}

\ytableausetup{smalltableaux}

\section{Introduction}

In 1988, Macdonald~\cite{Macdonald1988} introduced his celebrated
two-parameter symmetric functions and conjectured that when expanded in the
basis of Schur symmetric functions, their coefficients have a remarkable
property: they seem to be polynomials in two deformation parameters $q,t$
with nonnegative integer coefficients. Since then, a broad community
working on the symmetric functions theory devoted themselves to prove
Macdonald's conjecture, which resulted in a huge development of the
field.

In 1995, Lapointe and Vinet~\cite{LapointeVinet1995} proved that the coefficients of Jack
symmetric functions expanded in the monomial basis are polynomials in
the deformation parameter $\alpha$ with integer coefficient. Two
years later, Knop and Sahi~\cite{KnopSahi1997} found an explicit positive formula for
this expansion. Since Jack symmetric functions are
a limit case of Macdonald
symmetric functions, these results inspired further research
and shortly afterwards, the polynomiality
of the coefficients of Macdonald
polynomials was proved
independently and almost simultaneously (using different
approaches) in five different
papers~\cite{Sahi1996,GarsiaTesler1996,LapointeVinet1997,Knop1997,KirillovNoumi1998}. An
affirmative answer to Macdonald's original conjecture was finally
released in a beautiful and difficult paper of
Haiman~\cite{Haiman2001}, who was able to relate Macdonald's question
to a question about the geometry of Hilbert schemes of points in the
complex plane, to which he gave an affirmative answer. Even though this result built new bridges between various fields of mathematics, it did not provide an explicit combinatorial formula explaining
Schur-positivity. Regardless, it generated new research directions related with the structure of Macdonald and related symmetric functions.

In 2005, Haglund, Haiman and Loehr~\cite{HaglundHaimanLoehr2005}
found an explicit combinatorial formula for Macdonald
polynomials, lifting Knop and Sahi's
formula to the two-parameter world of Macdonald, and relating
Macdonald polynomials with another family of symmetric functions
introduced by Leclerc, Lascoux and Thibon in
1997~\cite{LascouxLeclercThibon1997}, and later conveniently named
\emph{LLT polynomials}. Haglund, Haiman and Loehr noticed~\cite{HaglundHaimanLoehr2005} that
Macdonald polynomials can be naturally decomposed as a positive
combination of LLT polynomials, so proving Schur-positivity for
LLT polynomials would give yet another proof of the famous
conjecture of Macdonald. This was done by Grojnowski and
Haiman~\cite{GrojnowskiHaiman2007}, who related LLT polynomials with
the Kazhdan-Lusztig theory in a much more general setting than what was
done before~\cite{LeclercThibon2000}, and therefore proved the Schur-positivity of LLT polynomials
indexed by arbitrary skew-shapes (see \cref{sec:LLTMacpolynomials} for the
details and all the necessary definitions).

\vspace{10pt}

In 2017, the first author together with F\'eray~\cite{DolegaFeray2017}
introduced \emph{Jack cumulants} as a tool to approach a fascinating
open problem in the theory of symmetric functions known as \emph{the
$b$-conjecture} (posed by Goulden and
Jackson~\cite{Gouldenjackson1996}), which relates Jack symmetric
functions with a weighted generting function of graphs embedded into
surfaces (and which, despite some recent progress~\cite{ChapuyDolega2022}, is still wide open). The notion of Jack cumulants naturally extends to
\emph{Macdonald cumulants} the same way as Jack polynomials can be seen
as the limit case of Macdonald polynomials. The first author with F\'eray observed
conjecturally that the coefficients of Macdonald cumulants seem to be
polynomials, which was later proved in~\cite{Dolega2017} and further
improved in~\cite{Dolega2019}, where an explicit positive
combinatorial formula for the Macdonald cumulants was proved. This rich combinatorial structure of
Macdonald cumulants naturally calls for investigating the expansion in
the Schur basis: extensive computer simulations performed by the first author~\cite{Dolega2017}
have led him to believe that a more general version of the original
question of Macdonald is true: the coefficients of the Schur expansion
of Macdonald cumulants are polynomials in $q,t$ with nonnegative
integer coefficients. We were recently informed that the
logarithm of a partition function for Macdonald polynomials was
considered by Hausel, Letellier and Rodriguez Villegas
~\cite{HauselLetellierRodriguezVillegas2011}, who conjectured its
monomial positivity interpreted as the mixed Hodge polynomials of character varieties. The notion of Macdonald cumulants
appears naturally in the decomposition of the logarithm of the
partition function, and the recent
work~\cite{AbdelgadirMellitRodriguezVillegas2019} (following~\cite{HauselSturmfels2002,Crawley-BoeveyVandenBergh2004}) exhibits that the
Poincar\'e polynomial of Nakajima quiver variaties (which can be seen
as a special case of the aforementioned conjecture) is given by the specialization of the Tutte polynomial $\Tu_G(1,q)$,
which is the same phenomenon as in our combinatorial interpretation of
Macdonald cumulants~\cite{Dolega2019}. All this gives yet additional
motivation for studying the combinatorial structure of Macdonald cumulants.

\vspace{10pt}

The main purpose of this note is to take a step in this direction
by introducing the notion of LLT cumulants. There are two natural
motivations for introducing them:
\begin{itemize}
\item
  by analogy with the decomposition of Macdonald
  polynomials into LLT polynomials, we show that the same phenomenon occurs at the
  level of higher cumulants: \emph{Macdonald cumulants} can be naturally
  expressed as a positive linear combination of \emph{LLT cumulants}
  -- see \cref{theo:MacCumuIntoLLTCumu};
\item
  in contrast to the purely algebraic definition of Macdonald
  cumulants inspired by the theory of conditional cumulants, we show that LLT cumulants (a priori defined using the same abstract framework)
  can be equivalently defined purely combinatorially as
  graph colorings -- see \cref{thm:cumasconnected}. In particular, it is
  natural to study a general class of graph
  colorings which contains LLT
  polynomials and LLT cumulants, and that allows to treat certain LLT-specific phenomena
  in a more general graph-theoretical sense -- see~\cref{sec:GraphCol}.
\end{itemize}

There are several applications of the aforementioned results. We start
by developing the theory of $q$-partial cumulants, which generalize
the $G$-inversion polynomials or, equivalently, the generating series of
$G$-parking
functions, which is also equal to the evaluation of the Tutte
polynomial $\Tu_G(1,q)$ (see~\cref{subsec:cumu}), and we prove a positivity result for these
cumulants (see~\cref{theo:G-cumu}). This result is crucial for
proving~\cref{theo:LLT->Mac} which says that Schur positivity of the
cospin LLT cumulant (that we state as~\cref{conj:LLTSchurPos}) implies
Schur positivity of Macdonald cumulants conjectured
in~\cite{Dolega2017}. In \cref{sec:GraphCol}, we introduce certain
digraphs that we call \emph{LLT graphs}, and we show that every LLT
polynomial is a weighted generating function of LLT graph
colorings. We describe the ring generated by these LLT graphs and we
prove that the LLT cumulant of an $r$-colored LLT graph
$(G,f)$ has a
natural interpretation as a weighted generating function of colorings
of all \emph{$f$-connected} subgraphs of $G$
(see~\cref{def:kaconnected} and the preceding paragraph for
the precise definition of $f$-connectedness and LLT cumulants of $r$-colored LLT graphs). We obtain this interpretation by studying
certain relations between colorings of various LLT graphs.

It is worth mentioning that recently, various authors have already proven many interesting results concerning positivity of LLT
polynomials and they heavily relied on some relations between
them~\cite{Lee2021,HuhNamYoo2020,AbreuNigro2021,AlexanderssonSulzgruber2022,Tom2021}. Our interpretation of LLT polynomials and LLT cumulants proves that the graph-theoretical point of
view is very natural and a characterization of all possible relations might
potentially be achieved pushing these studies further in the future
(see~\cref{rem:LLTRel}). We use our framework to refine some of the previous
positivity results, which gives evidence towards~\cref{conj:LLTSchurPos}:
\begin{itemize}
\item we prove that the coefficients of LLT cumulants of an $r$-colored LLT graph $(G,f)$
  in the quasi-symmetric monomial basis are polynomials in $q$ with
  nonnegative integer coefficients and we provide their explicit combinatorial
  interpretation (see~\cref{thm:kumuMonoPos}). This result is a
  refinement of the combinatorial formula for Macdonald polynomials~\cite{Dolega2019};
\item
  we deduce an analogous result for the fundamental quasisymmetric
  basis and using standard procedures, we deduce positivity of the
  coefficients of Schur basis indexed by hooks (see~\cref{theo:LLTFundamental,theo:LLTShurHooks});
\item
  we prove that LLT polynomials
  considered after the shift $q \to q+1$ naturally decompose as a sum
  of products of LLT cumulants. In the special case of vertical-strips,
  we deduce from the recent result of Alexandersson and
  Sulzgruber~\cite{AlexanderssonSulzgruber2022} a positive
  combinatorial formula for LLT cumulants in the basis of elementary
  functions (see~\cref{theo:e-positivity});
  \item
  we prove Schur positivity of LLT cumulants of $r$-colored lollipop
  graphs, generalizing previous result of Huh, Nam and Yoo~\cite{HuhNamYoo2020} (see~\cref{subsec:UniLLT} for the
  definitions and~\cref{theo:Lollipop} for the result).
\end{itemize}

Our paper is organized as follows: in~\cref{sec:LLTCum}, we review the
necessary background on Macdonald and LLT polynomials and on
cumulants. Then we introduce $q$-partial cumulants, we state our
main~\cref{conj:LLTSchurPos}, and we prove that it implies Schur
positivity of Macdonald cumulants. \cref{sec:GraphCol} is devoted to the
study of LLT graphs and weighted generating functions
of their colorings that we introduce. In~\cref{subsec:LLTGraphs}, we give a combinatorial
interpretation of LLT cumulants in the graph-theoretical framework and
in~\cref{subsec:Positivity}, we prove various positivity results
supporting~\cref{conj:LLTSchurPos}. In~\cref{sec:Coclude},
we conclude with comments and questions related with~\cref{conj:LLTSchurPos} and, in
particular, further partial results including Schur positivity for LLT
cumulants of $r$-colored melting lollipops.

\section{Macdonald cumulants and expansion in LLT cumulants}
\label{sec:LLTCum}

We use French convention for drawing Young diagrams, i.e.~ the largest
row is at the bottom and the largest column is on the left hand side.

\subsection{LLT and Macdonald polynomials} \label{sec:LLTMacpolynomials}

Let $\nnu = (\la^1/\mu^1,\dots,\la^\ell/\mu^\ell)$ be an $\ell$-tuple of
skew Young diagrams (and denote $\ell(\nnu) := \ell$). For each box $\square = (x,y) \in \la^i/\mu^i$, we define
its content $c(\square) = x-y$ and its shifted content as
$\tilde{c}(\square) = \ell c(\square)+i-1$. We say that a box $\square \in \nnu$
\emph{attacks} a box $\square' \in \nnu$ if $0<
\tilde{c}(\square')-\tilde{c}(\square) < \ell$. Let $T$ be a filling
of cells of the diagrams in $\nnu$. If for each $i \in [1..\ell]$ the
entries in $\la^i/\mu^i$ are weakly increasing in rows (from left to
right) and strictly increasing in columns (from bottom to top), we say
that $T$ is a \emph{semistandard} filling, and we denote it by $T \in
\SSYT(\nnu)$. Finally, we call a pair of boxes $\square,\square'
\in \nnu$ an \emph{inversion} of $T$ if $T(\square) > T(\square')$ and $\square$ attacks
$\square'$. We denote the set of inversions of $T$ by $\Inv(T)$ and
its cardinality by $\inv(T)$.

\vspace{5pt}

\emph{LLT polynomial} $\LLT(\nnu)$ is the weighted generating series of $\SSYT(\nnu)$:

\begin{equation}
  \label{eq:LLT-def}
  \LLT(\nnu) = \sum_{T \in \SSYT(\nnu)}q^{\inv(T)}\xx^T,
  \end{equation}
  where $\xx^T := \prod_{\square \in \nnu} x_{T(\square)}$.

  \vspace{5pt}

  This definition was introduced in \cite{HaglundHaimanLoehr2005} and
  it is related to the original definition of Lascoux, Leclerc and
  Thibon \cite[Equation (26)]{LascouxLeclercThibon1997} by:

  \begin{equation}
  \label{eq:LLT-cospin-def}
  \LLT^{\cospin}(\nnu) = q^{-\min_{T \in \SSYT(\nnu)}\inv(T)}\sum_{T \in \SSYT(\nnu)}q^{\inv(T)}\xx^T,
\end{equation}
where $\LLT^{\cospin}(\nnu) =
        \tilde{G}^{(r)}_\rho(X;q)$ using notation from
        \cite[Equation (26)]{LascouxLeclercThibon1997}.

        \begin{remark} The shape $\rho$ is obtained from $\nnu$ via the Stanton--White algorithm \cite{StantonWhite1985}, and since we do not use the original version of $r$-ribbon tableaux in this article, we treat \cref{eq:LLT-def} as the definition and refer to
	\cite{StantonWhite1985,LascouxLeclercThibon1997} for those who are
	interested in the equivalent framework of $r$-ribbon tableaux.
\end{remark}

The statistic $\inv(T)-\min_{T \in \SSYT(\nnu)}\inv(T)$ can be
  realized as the cardinality of a subset $\Inv_{\cospin}(T)$ of $\Inv(T)$ due to
  \cite{SchillingShimozonoWhite2003} (in particular, $\min_{T \in
    \SSYT(\nnu)}\inv(T) = |\Inv(T)\setminus\Inv_{\cospin}(T)|$ for
  any $T \in
    \SSYT(\nnu)$). For a box $\square \in \nnu$, we denote
  by $\square_{\leftarrow}, \square_{\rightarrow}, \square_{\uparrow},
  \square_{\downarrow}$ the boxes which are lying directly to the
  left, right, up and down of the box $\square$, respectively. Define $\Inv_{\cospin}(T)$ as follows:
  \begin{align*}
    \Inv_{\cospin}(T) = \{&(\square,\square') \in
  \Inv(T): (\square'_{\uparrow},\square) \in
  \Inv(T) \text{ and the row coordinate} \\
    &\text{of $\square$ is weakly smaller
    than the row coordinate of $\square'$} \}.
    \end{align*}
  Here, the convention is
    that for $\square'_{\uparrow}\notin \nnu$ the pair
    $(\square'_{\uparrow},\square)$ is automatically an inversion.
    Then 
  \begin{equation}
  \label{eq:LLT-cospin-def2}
  \LLT^{\cospin}(\nnu) = \sum_{T \in \SSYT(\nnu)}q^{|\Inv_{\cospin}(T)|}\xx^T.
\end{equation} 

In the special case when $\nnu =
(\la^1/\mu^1,\dots,\la^\ell/\mu^\ell)$ is a sequence of
ribbon shapes, i.e.,~ connected skew shapes which do not contain a
shape of size $2\times 2$, we define a normalization
  \begin{equation}
  \label{eq:LLT-Mac-def}
  \LLT^{\Mac}(\nnu) = q^{-a(\nnu)}\sum_{T \in \SSYT(\nnu)}q^{\inv(T)}\xx^T,
\end{equation}
where
\[ a(\nnu) = \sum_{\square \in \Des(\nnu)}|\{\square':
c(\square')=c(\square), \tilde{c}(\square')>\tilde{c}(\square)\}|,\]
and an element of $\Des(\nnu)$ is a box in $\nnu$, which is lying
directly above another box in $\nnu$.

This particular choice of normalization is motivated by the
combinatorial formula of Haglund, Haiman and Loehr, for
Macdonald polynomials $\tilde{H}_{\lambda}^{(q,t)}$. It expresses
a Macdonald polynomial $\tilde{H}_{\lambda}^{(q,t)}$ as a sum of LLT
polynomials indexed by $\la_1$-tuples of shapes of sizes
$\la'_j$, $1 \leq j \leq \la_1$ where $\lambda'$ denotes the \emph{transpose} of $\lambda$, i.e., the diagram with $\lambda_1$ boxes in the first column, $\lambda_2$ boxes in the second column, etc. For our purposes, we treat the following
formula as the definition of Macdonald polynomials:

\begin{theorem}{\cite{HaglundHaimanLoehr2005}}
  \label{theo:HHL}
   For any partition $\lambda$ the following expansion holds true
    \begin{equation}
      \label{eq:MacIntoLLT}
      \tilde{H}_{\lambda}^{(q,t)} =
      \sum_{\nnu}t^{\maj(\nnu)}\LLT^{\Mac}_{\nnu},
    \end{equation}
    where we sum over all tuples of skew-partitions such that $\nnu_j$ is a ribbon of length $\la'_j$
    whose bottom, far-right cell has content
    $0$. 
  \end{theorem}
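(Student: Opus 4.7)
The plan is to derive the expansion from the original combinatorial formula of Haglund, Haiman and Loehr, which represents $\tilde{H}_\lambda^{(q,t)}$ as a weighted sum
\[
\tilde{H}_\lambda^{(q,t)}(\xx) = \sum_{T:\lambda \to \Z_{>0}} q^{\inv(T)}\, t^{\maj(T)}\, \xx^T
\]
ranging over \emph{all} fillings of $\lambda$ by positive integers, with $\maj(T)$ counting (with multiplicity given by the arm+1) the cells whose entry is strictly greater than that of the cell directly below, and $\inv(T)$ counting attacking pairs in reading order. The natural strategy, essentially already present in \cite{HaglundHaimanLoehr2005}, is to reorganize this sum by grouping fillings according to their descent set.

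The key step is the following: the statistic $\maj(T)$ depends on $T$ only through its descent set $D(T) \subseteq \lambda$. Fix $D$ and look at the $j$-th column of $\lambda$ separately. The descents inside column $j$ cut this column into maximal vertical runs; gluing consecutive runs horizontally on the right produces a ribbon $\nnu_j$ of length $\lambda'_j$ whose bottom, far-right cell is placed at content $0$. The column-by-column association $D \mapsto \nnu = (\nnu_1,\ldots,\nnu_{\lambda_1})$ gives precisely the ribbon tuples appearing on the right-hand side of \eqref{eq:MacIntoLLT}. Under this reorganization, fillings of $\lambda$ with descent set exactly $D$ are in bijection with elements of $\SSYT(\nnu)$: weak increase along rows of $\nnu_j$ corresponds to the absence of a descent between two cells lying in the same column and same run, while strict increase up columns of $\nnu_j$ corresponds to the cells in $D$. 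Under the same bijection, $\maj(T)$ becomes the statistic $\maj(\nnu)$ appearing in \eqref{eq:MacIntoLLT}.

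It remains to compare the $q$-statistics. After shifting contents by $\tilde{c}(\square) = \ell\, c(\square) + i - 1$, two cells in $\lambda$ are attacking in the HHL sense if and only if the corresponding cells in the ribbon tuple $\nnu$ are attacking in the LLT sense. Among these attacking pairs, those coming from a cell placed directly above a cell in the same column of $\lambda$ are automatically inversions once the descent pattern is prescribed; the total number of such forced inversions is, by a direct count, exactly
\[
a(\nnu) = \sum_{\square \in \Des(\nnu)} \bigl|\{\square' : c(\square')=c(\square),\ \tilde{c}(\square')>\tilde{c}(\square)\}\bigr|,
\]
so that $\inv(T) = a(\nnu) + \inv(T_\nnu)$, where $T_\nnu \in \SSYT(\nnu)$ is the image of $T$. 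This is precisely what the normalization $q^{-a(\nnu)}$ in the definition \eqref{eq:LLT-Mac-def} of $\LLT^{\Mac}_\nnu$ absorbs, yielding the claimed identity.

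The delicate part of this argument, and the main obstacle, is the bookkeeping in the last paragraph: verifying that the HHL attacking relation on $\lambda$ transports correctly to the LLT attacking relation on $\nnu$ across the different contents/columns, and that the count of forced inversions matches $a(\nnu)$ exactly. Both boil down to inspecting how the ribbon gluing procedure interacts with the shifted content, a routine but careful case analysis that was carried out in \cite{HaglundHaimanLoehr2005}; no additional algebraic input beyond the HHL formula itself is needed.
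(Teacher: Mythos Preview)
The paper does not give its own proof of this statement: it is quoted from \cite{HaglundHaimanLoehr2005} and explicitly adopted as the \emph{definition} of $\tilde H_\lambda^{(q,t)}$ (``For our purposes, we treat the following formula as the definition of Macdonald polynomials''). So there is nothing to compare your argument against in the paper itself; your sketch is essentially a recap of the original HHL argument, which is the appropriate thing to point to.

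One point of imprecision worth flagging: your identity ``$\inv(T)=a(\bm\nu)+\inv(T_{\bm\nu})$'' has the correction going in the wrong direction relative to the normalization in \eqref{eq:LLT-Mac-def}. In HHL the filling statistic is $\inv_{\mathrm{HHL}}(\sigma)=|\Inv(\sigma)|-\sum_{u\in\Des(\sigma)}\mathrm{arm}(u)$; once the descent set is fixed, $|\Inv(\sigma)|$ becomes the LLT inversion count on the associated ribbon tableau and the arm sum becomes $a(\bm\nu)$, so one gets $\inv_{\mathrm{HHL}}(T)=\inv_{\mathrm{LLT}}(T_{\bm\nu})-a(\bm\nu)$, not $+a(\bm\nu)$. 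This is exactly what makes $q^{\inv_{\mathrm{HHL}}(T)}=q^{-a(\bm\nu)}q^{\inv_{\mathrm{LLT}}(T_{\bm\nu})}$ match the factor in $\LLT^{\Mac}(\bm\nu)$. Relatedly, your description of the ``forced inversions'' as pairs of cells stacked in the same column of $\lambda$ is off: such pairs lie in the same ribbon and are never LLT-attacking; the pairs counted by $a(\bm\nu)$ are a descent cell in $\bm\nu_j$ together with a cell of the same content in some $\bm\nu_{j'}$ with $j'>j$, which under the bijection corresponds to the arm cells to the right of a descent in $\lambda$. With these corrections the bookkeeping goes through as in \cite{HaglundHaimanLoehr2005}.
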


  The statistic $\maj$, which appears in \eqref{eq:MacIntoLLT}, is defined as follows:
  \begin{equation}
    \label{eq:maj}
    \maj(\nnu) := \sum_{i=1}^{\ell(\nnu)}\maj(\nnu_i) = \sum_{i=1}^{\ell(\nnu)}\sum_{\square \in
      \Des(\nnu_i)}|\{\square' \in \nnu_i: c(\square')<c(\square)|.
    \end{equation}

    \subsection{Cumulants}
    \label{subsec:cumu}

    The notion of cumulants was originally studied by Leonov \sloppy and
    Shiryaev~\cite{LeonovShiryaev1959} in the context of probability
    theory. Cumulants appear now in a wide variety of contexts, see~\cite[Chapter
    6]{JansonLuczakRucinski2000} for their role in studying random graphs and~\cite{NovakSniady2011} for a
    concise introduction to noncommutative probability
    theory and various types of cumulants. In what follows, we will be
    interested in the $q$-deformation of partial cumulants
    that appeared in~\cite{Dolega2017} and was inspired by the
    classical definition of conditional cumulants (see \cref{def:Cumu}).

    \begin{definition}
      \label{def:QCumu}
    Suppose
that $\A$ is an algebra over the fraction field $\QQ(q)$. Let $\uu := (u_I)_{I \subseteq V}$
    be a family of elements in $\A$,
    indexed by subsets of a finite set $V$.
    Then its {\em $q$-partial cumulants} are defined as follows.
        For any non-empty subset $I$ of $V$, set
        \begin{equation}
            \ka^{(q)}_I(\uu) = (q-1)^{1-|I|}\sum_{\substack{\pi \in \PPP(I) } } 
            (-1)^{|\pi|-1}(|\pi|-1)!  \prod_{B \in \pi} u_B.
        \label{EqDefCumulants}
    \end{equation}
  \end{definition}
  The sum runs over elements of the family $\PPP(I)$ of
  \emph{set-partitions} of $I$: a set-partition $\pi \in \PPP(I)$ is a
  set of disjoint subsets of $I$ whose union is equal to $I$ (so
  one can think that an element $\pi \in \PPP(I)$ is grouping elements
  of $I$ into disjoint subsets) and the number of elements of $\pi$ is
  denoted by $|\pi|$.

  \begin{definition}
    \label{def:Cumu}
Let $\A$ be a vector space with two different
commutative multiplicative
structures $\cdot$
and $\oplus$, which define two (different) algebra structures
on $\A$. For any $X_1,\dots,X_r \in \A$, we define the \emph{conditional
cumulant} $\ka(X_1,\dots,X_r) \in \A$ as the coefficient of $t_1\cdots t_r$ in the following formal power
series in $t_1,\dots,t_r$:
\begin{equation} 
\label{eq:cumu1}
\ka(X_1,\dots,X_r) := [t_1\cdots t_r] \log_{\cdot} \left(
  \exp_{\oplus}(t_1X_1+\cdots + t_rX_r)\right),
\end{equation}
where $\log_{\cdot}$ and $\exp_{\oplus}$ are defined in a standard way
with respect to multiplication given by $\cdot$ and $\oplus$,
respectively.
\end{definition}

With the above in mind, we get
\[ \log_{\cdot}(1+A) = \sum_{n \geq 1}\frac{(-1)^{n-1}A^{\cdot
      n}}{n}, \qquad \exp_{\oplus}(A) = \sum_{n \geq 0}\frac{A^{\oplus n}}{n!}.\]
Then, one can show that setting
\[ u_B := \bigoplus_{b \in B}X_b,\]
the $q$-partial cumulant $\ka^{(q)}_{[1..r]}(\uu)$ evaluated at $q=0$
coincides with the conditional cumulant $\ka(X_1,\dots,X_r)$ up to a
sign:
\[ \ka^{(0)}_{[1..r]}(\uu) = (-1)^{r-1}\ka(X_1,\dots,X_r).\]

Although the cumulants originate from the probability theory, the
$q$-deformation introduced here is also relevant in the context of certain graph
invariants, called \emph{inversion polynomials}. Let $G = (V,E)$ be a multigraph
(i.e. a graph with multiple loops and multiple edges allowed) and for any subset of vertices
$I
\subset V$ we denote by $e_I$ the number of edges in $G$ connecting
vertices in $I$. It was shown in \cite{Dolega2019} that for the family
$\uu$ defined by
\[ u_I := q^{e_I}\]
the asociated $q$-partial cumulant $\ka^{q}_{V}(\uu)$ is equal to the
$G$-inversion polynomial $\mathcal{I}_G(q)$ (which is also equal to the
evaluation of the Tutte polynomial
$\Tu_G(1,q)$ and to the generating series of $G$-parking function;
a fact that will not be used in this paper). In particular, it is a polynomial
in $q$ with nonnegative integer coefficients and it was used to prove
positivity results for the $q$-partial cumulants of Macdonald
polynomials; we postpone its precise definition to
\cref{subsubsec:Tutte}, where we use it to provide certain explicit
combinatorial formulae. In the following, we show another positivity property of cumulants constructed by using
multigraphs. This positivity property will be crucial for our
first applications.

Suppose that $\uu$ is a family as in \cref{def:QCumu}, and let $G$ be a multigraph with the vertex set $V$. Define the family $\uu^G$ by setting
  \[ u_I^G := q^{e_I}u_I\]
  for any subset $I \subset V$. Finally, for any set-partition $\pi
  \in \PPP(I)$, define a family $\uu(\pi) := (\tilde{u}_B)_{B \subset
    \pi}$ by setting $\tilde{u}_B := u_{\cup B}$ (note that for $B \subset\pi
\in \PPP(I)$, one has $\bigcup B \subset I$ so that $\uu(\pi)$ is well defined).

  \begin{theorem}
    \label{theo:G-cumu}
  The $q$-partial cumulant $\ka^{(q)}_I(\uu
  ^G)$ is a
  $q$-positive combination of the $q$-partial cumulants
  $\ka^{(q)}_{\pi}(\uu(\pi))$, where $\pi \in \PPP(I)$.
    \end{theorem}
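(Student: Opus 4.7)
The plan is to expand both sides of the desired identity as explicit sums over the set-partition lattice $\PPP(I)$, treat the $u_I$'s as formally independent so that matching coefficients reduces the statement to a scalar identity in $q$, and then apply Möbius inversion on $\PPP(I)$ together with the interpretation of $\mathcal{I}_G(q)$ as a $q$-cumulant recalled just before the theorem to obtain closed-form coefficients.

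First, I would write out from the definition
\[\ka^{(q)}_I(\uu^G) = (q-1)^{1-|I|}\sum_{\tau \in \PPP(I)}(-1)^{|\tau|-1}(|\tau|-1)!\,q^{e_\tau}\prod_{B \in \tau}u_B,\]
with $e_\tau := \sum_{B \in \tau}e_B$, and use the natural bijection $\PPP(\rho) \to \{\tau \in \PPP(I) : \tau \geq \rho\}$ that sends a set-partition $\sigma$ of $\rho$ to the coarsening obtained by merging blocks within each part of $\sigma$ to rewrite
\[\ka^{(q)}_\rho(\uu(\rho)) = (q-1)^{1-|\rho|}\sum_{\tau \geq \rho}(-1)^{|\tau|-1}(|\tau|-1)!\prod_{B \in \tau}u_B.\]
Treating the $u_B$'s as formal indeterminates and matching the coefficient of $\prod_{B \in \tau}u_B$ in the sought identity $\ka^{(q)}_I(\uu^G) = \sum_\rho c_\rho(q)\,\ka^{(q)}_\rho(\uu(\rho))$ then reduces the theorem to exhibiting $c_\rho(q) \in \N[q]$ satisfying
\[q^{e_\tau} = \sum_{\rho \leq \tau} c_\rho(q)\,(q-1)^{|I|-|\rho|}\]
for every $\tau \in \PPP(I)$, where $\leq$ denotes the refinement order.

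Next, Möbius inversion on $\PPP(I)$ would uniquely determine
\[c_\rho(q)\,(q-1)^{|I|-|\rho|} = \sum_{\tau \leq \rho}\mu(\tau,\rho)\,q^{e_\tau}.\]
The crucial step is to observe that the interval $[\hat 0,\rho]$ splits as the direct product of lattices $\prod_{R \in \rho}\PPP(R)$, and that both $e_\tau = \sum_R e_{\tau|_R}$ and $\mu(\tau,\rho) = \prod_R\mu(\tau|_R,\hat 1_R)$ factor blockwise, so the sum factors as
\[c_\rho(q)\,(q-1)^{|I|-|\rho|} = \prod_{R \in \rho}\left(\sum_{\tau_R \in \PPP(R)}(-1)^{|\tau_R|-1}(|\tau_R|-1)!\,q^{e_{\tau_R}}\right).\]
By the interpretation of the inversion polynomial recalled just before the theorem applied to the induced submultigraph $G|_R$, each bracketed factor equals $(q-1)^{|R|-1}\mathcal{I}_{G|_R}(q)$. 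Since $\sum_{R \in \rho}(|R|-1) = |I|-|\rho|$, the $(q-1)$-powers cancel and I expect to obtain the closed form
\[c_\rho(q) = \prod_{R \in \rho}\mathcal{I}_{G|_R}(q),\]
which lies in $\N[q]$ by the known nonnegativity of each $\mathcal{I}_{G|_R}(q) = \Tu_{G|_R}(1,q)$. The main conceptual hurdle is the blockwise factorization after Möbius inversion; once it is in place, $q$-positivity reduces to the established positivity of the $G$-inversion polynomial.
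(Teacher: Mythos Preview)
Your argument is correct. Working formally in the polynomial ring over $\QQ(q)$ generated by the symbols $u_B$, the products $\prod_{B\in\tau}u_B$ indexed by $\tau\in\PPP(I)$ are distinct monomials, so matching their coefficients is legitimate; the scalar system you obtain is then equivalent, via Möbius inversion on $(\PPP(I),\le)$, to a closed-form definition of $c_\rho(q)$, and the blockwise factorization together with the identity $\ka^{(q)}_R(q^{e_\bullet})=\mathcal{I}_{G|_R}(q)$ recalled in Section~\ref{subsec:cumu} yields $c_\rho(q)=\prod_{R\in\rho}\mathcal{I}_{G|_R}(q)\in\N[q]$.

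This is a genuinely different route from the paper's proof. The paper argues by a double induction: an outer induction on $|I|$ reduces to expressing the difference $\ka^{(q)}_I(\uu^{G'})-\ka^{(q)}_I(\uu)$ (with loops stripped) as a $q$-positive sum of cumulants $\ka^{(q)}_{\sigma_k}(\uu(\sigma_k)^{G_k})$ over certain two-block partitions $\sigma_k$ and auxiliary multigraphs $G_k$, one per edge of $G'$; an inner induction on the number of edges then establishes this decomposition constructively. The paper's approach never writes down the coefficient of $\ka^{(q)}_\rho(\uu(\rho))$ in closed form, whereas you obtain it immediately as $\prod_{R\in\rho}\mathcal{I}_{G|_R}(q)$. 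Your argument is shorter and yields sharper information; the paper's construction, on the other hand, is self-contained in that it does not invoke the identification of $\mathcal{I}_G(q)$ with a $q$-cumulant (it only needs induction and elementary manipulations), and it exhibits an explicit recursive ``edge-by-edge'' mechanism that may be useful in other contexts.
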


    \begin{proof}
      We will prove the theorem by induction on $|I|$. For $|I|=1$, the statement
      is obvious so suppose that $|I| >1$. Strictly from the definition
      of the $q$-partial cumulant~\eqref{EqDefCumulants}, $\ka^{(q)}_I(\uu
  ^G)$ can be expressed as $\ka^{(q)}_I(\uu
  ^G)  = q^{\sum_{i \in I}e_{\{i\}}}\cdot\ka^{(q)}_I(\uu
  ^{G'})$, where $G'$ is the graph $G$ restricted to the vertices from
  $I$ and with all the loops removed. Indeed, for every set-partition
  $\pi \in \PPP(I)$, the summand $(-1)^{|\pi|-1}(|\pi|-1)!  \prod_{B \in \pi} q^{e_B}u_B$
  appearing in~\eqref{EqDefCumulants} can be rewritten as $q^{\sum_{i
      \in I}e_{\{i\}}}(-1)^{|\pi|-1}(|\pi|-1)!  \prod_{B \in \pi}
q^{e_B(G')}u_B,$
where $e_B(G')$ denotes the number of edges in $G'$ connecting
vertices in $B$ (which is the same as the number of edges in $G$ connecting
distinct vertices in $B$). We further decompose
$\kumu_I^{(q)}(\uu^G)$ as
\[ \kumu_I^{(q)}(\uu^G) = q^{\sum_{i \in I} e_i}\bigg
  (\kumu^{(q)}_I(\uu) + \Big(\ka^{(q)}_I(\uu
  ^{G'}) - \kumu^{(q)}_I(\uu) \Big)\bigg),\]
which is relevant for using the inductive hypothesis. Indeed, the
second term in this decomposition can be expressed as:
\begin{align*}
  \ka^{(q)}_I(\uu
  ^{G'}) - \kumu^{(q)}_I(\uu) &= (q-1)^{1-|I|}\sum_{\substack{\pi \in \PPP(I) } } 
            (-1)^{|\pi|-1}(|\pi|-1)!  \big(q^{\sum_{B \in
                \pi}e_B(G')}-1\big)\prod_{B \in \pi} u_B\\
      &=(q-1)^{2-|I|}\sum_{\substack{\pi
          \in\PPP(I),\\ |\pi| < |I|}} (-1)^{|\pi|-1}(|\pi|-1)! \left[\sum_{B \in
            \pi}e_B(G') \right]_q\prod_{B\in \pi}u_B,      
            \end{align*}
      where $[n]_q := \frac{q^n-1}{q-1} =
      \sum_{i=0}^{n-1}q^i$ is the standard numerical factor. Let $e(G')
      := e_{I}(G')$
      denote the number of edges in $G'$. In the
      following, we are
      going to construct set-partitions $\sigma_1,\dots, \sigma_{e(G')}
      \in \PPP(I)$ each consisting of precisely $|I|-1$ elements, and graphs $G_1,\dots, G_{e(G')}$ with $|I|-1$ vertices such that
      \begin{align}
        \label{eq:GraphEq}
        (q-1)^{2-|I|}\sum_{\substack{\pi
          \in\PPP(I),\\ |\pi| < |I|}} (-1)^{|\pi|-1}(|\pi|-1)! \left[\sum_{B \in
            \pi}e_B(G') \right]_q\prod_{B\in \pi}u_B =
       \sum_{k=1}^{e(G')} \kumu^{(q)}_{\sigma_k}(\uu(\sigma_k)^{G_k}),
      \end{align}
      which allows to conclude the proof using the inductive hypothesis.
      
      We arbitrarily order edges of $G'$ and for any $1 \leq i \leq
      e(G')$ we denote by $E_i(G')$ the
      set of the first $i$ edges in $G'$. Let $\{m,n\}$ be the set of
      endpoints of the $i$-th edge in $G'$. Define the graph $G_i$ as
      follows:
      \begin{itemize}
        \item its set of vertices is equal to the set partition
      $\sigma_i := \{\{m,n\},\{k\}\colon k\in I\setminus\{m,n\}\}$,
      which is the unique set partition of $I$ with
      $|I|-1$ elements, whose element of size two is equal to
      $\{m,n\}$. In other terms, there is precisely one vertex of
      $G_i$ equal to the set $\{m,n\}$, and every other
      vertex of
      $G_i$ is equal to the singleton $\{k\}$, where $k \in
      I\setminus\{m,n\}$. In particular, $G_i$ has precisely $|I|-1$
      vertices.
      \item For
      any elements $k,l \in I\setminus\{m,n\}$, the number of edges
      linking vertices $\{k\}$ and $\{l\}$ in $G_i$ is given by the number of
      edges in $E_i(G')$ with endpoints $\{k,l\}$.
      \item For each $k \in
     I\setminus\{m,n\}$, the number of edges linking vertices $\{k\}$
      and $\{m,n\}$ in $G_i$ is given by the number of
      edges in $E_i(G')$ with endpoints $\{k,m\}$ or
      $\{k,n\}$.
      \item Finally, the number of loops attached to vertex
      $\{m,n\}$ is given by the number of
      edges in $E_{i-1}(G')$ with endpoints $\{m,n\}$.
      \end{itemize}

      Let us prove by
      induction on $e(G')$ that the constructed graphs satisfy
      \cref{eq:GraphEq}. Clearly, when $G'$ has no edges, both
      hand sides of \cref{eq:GraphEq} are equal to $0$. Suppose that
      $e(G') > 0$ and let $G''$ denote the graph obtained from
      $G'$ by removing its largest edge $\{m,n\}$. Then
            \begin{align*}
        &(q-1)^{2-|I|}\sum_{\substack{\pi
          \in\PPP(I),\\ |\pi| < |I|}} (-1)^{|\pi|-1}(|\pi|-1)! \left[\sum_{B \in
            \pi}e_B(G') \right]_q\prod_{B\in \pi}u_B = \\ &(q-1)^{2-|I|}\sum_{\substack{\pi
          \in\PPP(I),\\ |\pi| < |I|}} (-1)^{|\pi|-1}(|\pi|-1)! \left[\sum_{B \in
            \pi}e_B(G'') \right]_q\prod_{B\in \pi}u_B + \\ &(q-1)^{2-|I|}\sum_{\pi
          \in\PPP(\sigma_{e(G')})} (-1)^{|\pi|-1}(|\pi|-1)! \prod_{B\in
                                                             \pi}q^{e_{\cup B} (G'')}
                                                             u_{\cup B}.
            \end{align*}
            By the inductive hypothesis, we have that
                        \begin{align*}
        (q-1)^{2-|I|}\sum_{\substack{\pi
          \in\PPP(I),\\ |\pi| < |I|}} (-1)^{|\pi|-1}(|\pi|-1)! \left[\sum_{B \in
            \pi}e_B(G'') \right]_q\prod_{B\in \pi}u_B = \sum_{k=1}^{e(G'')} \kumu^{(q)}_{\sigma_k}(\uu(\sigma_k)^{G_k}).
                        \end{align*}
                        Moreover, strictly from the construction of
                        $G_{e(G')}$, we have that $e_B(G_{e(G')}) =
                        e_{\cup B} (G')-1 = e_{\cup B} (G'')$ for any
                        $\{\{m,n\}\}\subset B \subset \sigma_{e(G')}$ and
                        $e_B(G_{e(G')}) = e_{\cup B} (G')=e_{\cup B} (G'')$ for any
                        $B \subset \sigma_{e(G')}\setminus \{\{m,n\}\}$. Therefore,
                                                \begin{align*}
        (q-1)^{2-|I|}\sum_{\pi
          \in\PPP(\sigma_{e(G')})} (-1)^{|\pi|-1}(|\pi|-1)! \prod_{B\in
                                                             \pi}q^{e_{\cup B} (G'')}
                                                             u_{\cup B} = \kumu^{(q)}_{\sigma_{e(G')}}(\uu(\sigma_{e(G')})^{G_{e(G')}}),
                                                \end{align*}
                                                which finishes the proof.
                                              \end{proof}

                                              \subsection{Macdonald and LLT cumulants}

Let $\nnu = (\la^1/\mu^1,\dots,\la^\ell/\mu^\ell)$ be an $\ell$-tuple of
skew Young diagrams. For any surjective function $f: [1..\ell] \to
[1..r]$, we say that a pair $(\nnu,f)$ is an \emph{$r$-colored
  tuple} of skew Young diagrams and we will think of it as an
$\ell$-tuple colored by $r$ colors, so that $i$-th element
$\la^i/\mu^i$ has color $f(i)$. For an $r$-colored
  tuple of skew Young diagrams $(\nnu,f)$ and for a subset $B
\subset [1..r]$, we define a tuple of
skew Young diagrams $(\nnu,f)^B$ as the sub-tuple of $\nnu$
colored by colors from $B$. More formally, $(\nnu,f)^B
:=(\la^{i_1}/\mu^{i_1},\dots,\la^{i_k}/\mu^{i_k})$, where $f^{-1}(B) =
\{i_1,\dots,i_k\}$ and $i_1 < \cdots < i_k$.

For a given $r$-colored
  tuple of skew Young diagrams $(\nnu,f)$, we define \emph{LLT cumulants} (with respect to different normalizations)
by the following formulae:
\begin{align}
  \label{eq:defLLTkumu}
	\ka_{\LLT^{\cospin}}(\nnu,f) := \kumu_{[1..r]}^{(q)}(\uu(\LLT^{\cospin})), \hspace{.2cm} &\text{where } \hspace{.2cm} u(\LLT^{\cospin})_B := \LLT^{\cospin}(\nnu,f)^B,\\
  	\ka_{\LLT^{\Mac}}(\nnu,f) := \kumu_{[1..r]}^{(q)}(\uu(\LLT^{\Mac})), \hspace{.2cm} &\text{where } \hspace{.2cm} u(\LLT^{\Mac})_B := \LLT^{\Mac}(\nnu,f)^B,\\
  	\ka_{\LLT}(\nnu,f) := \kumu_{[1..r]}^{(q)}(\uu(\LLT)), \hspace{.2cm} &\text{where } \hspace{.2cm} u(\LLT)_B := \LLT(\nnu,f)^B.
  \end{align}

Note that for any $\ell$-tuple of skew Young diagrams $\nnu$
there exists a unique $1$-colored tuple of skew Young diagrams
$(\nnu,\pi^{[1..\ell]}_{[1]})$, where $\pi^{[1..\ell]}_{[1]}$ is the unique
surjection of $[1..\ell]$ onto $\{1\}$. In this case, the cumulants
  $\ka_{\LLT^{\cospin}}(\nnu,\id_{[1..\ell]}),
  \ka_{\LLT^{\Mac}}(\nnu,\id_{[1..\ell]})$, and
  $\ka_{\LLT}(\nnu,\id_{[1..\ell]})$ coincide with the associated
  LLT functions $\LLT^{\cospin}(\nnu),
  \LLT^{\Mac}(\nnu)$, and
  $\LLT(\nnu)$, respectively. In general, LLT-cumulants can
  be interpreted as an $r$-colored generalization of LLT
  polynomials.

  The concept of $r$-colored
  tuples of skew shapes arose from the definition of cumulants of
  the symmetric functions naturally indexed by partitions. This definition was
  introduced in~\cite{DolegaFeray2017} (in the context of Jack
  and Macdonald symmetric functions) as follows: let
  $\{f_\lambda\}$ be a class of symmetric functions indexed by
  partitions. For partitions $\la^1,\dots,\la^r$,
we define the family $(\uu)$ indexed by subsets of $[1..r]$ as $u_B :=
f_{\lambda^B}$, where the partition $\lambda^B:=\bigoplus_{i
  \in B}\la^i$ is obtained from partitions $\la^i\colon i \in B$ by
summing their coordinates: $\la^B_j := \sum_{i \in B}\la^i_j$.
We observe that the data of partitions $\lambda^1,\dots,\lambda^r$ can
be alternatively encoded as an $r$-colored partition
$(\lambda = \lambda^{[1..r]},f)$ as follows: let $\lambda$ be a partition
and let $f\colon [1..\ell(\lambda)] \to [1..r]$ be a surjective function (that we interpret as the coloring of columns of the Young diagram
$\lambda$ by $r$ colors) such that the Young diagram formed by columns colored by $i$
is equal to $\lambda^i$. Then, it is clear that for every $B \subset [1..r]$, the Young
diagram formed by columns colored by colors in $B$ is equal to
$\lambda^B$. Of course, there are many colorings $f\colon [1..\ell(\lambda)]
\to [1..r]$ which encode partitions $\la^1,\dots,\la^r$ as an
$r$-colored partition $(\lambda,f)$, but among them there is a canonical
choice, which we call the \emph{canonical coloring} $(\lambda,f_{\cc}\colon [1..\ell(\lambda)]
\to [1..r])$. It is uniquely determined by the following property:
for any $i < j$ such that $\lambda'_i = \lambda'_j$ (we recall that $\lambda'$ denotes the \emph{transpose} of $\lambda$, i.e., the diagram with $\lambda_1$ boxes in the first column, $\lambda_2$ boxes in the second column, etc.), one has
$f_{\cc}(i) \leq f_{\cc}(j)$. This property simply means that the
Young diagram $\lambda^{[r]}$ can be obtained by sorting the columns
of $\lambda^1,\dots,\lambda^r$ such that all the columns of the same
height are ordered with respect to the natural order $1 < \cdots <r$,
see \cref{fig:ColoredPartitons}.

\begin{figure}
  \includegraphics[]{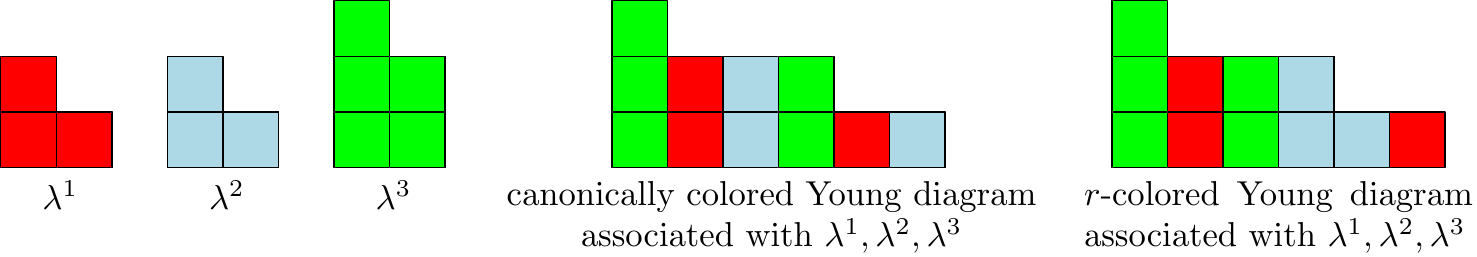}
  \caption{$r$-colored and canonically colored partitions.}
\label{fig:ColoredPartitons}
\end{figure}

When
$f_\lambda = \tilde{H}_\lambda^{(q,t)}$ is the
transformed version of the Macdonald polynomial indexed by a partition
$\lambda$, the corresponding $q$-partial cumulant
$\ka_{[1..r]}^{(q)}(\uu)$ is called the \emph{Macdonald cumulant}
and denoted $\ka(\la^1,\dots,\la^r)(\xx;q,t)$:
\begin{equation}
\label{eq:MacdoCumu}
\ka(\la^1,\dots,\la^r)(\xx;q,t) := \ka_{[1..r]}^{(q)}(\uu).
\end{equation}
It was studied in \cite{Dolega2017,Dolega2019}, where its
polynomiality and combinatorial interpretation was obtained,
generalizing the celebrated HHL formula~\eqref{eq:MacIntoLLT}. Furthermore,
it was conjectured in \cite{Dolega2017} that Macdonald cumulants are
Schur-positive:

  \begin{conjecture}[\cite{Dolega2017}]
    \label{conj:MacSchurPos}
Let $\lambda^1,\dots,\lambda^r$ be partitions. Then for any partition
$\nu$ the coefficient
$[s_\nu]\ka(\la^1,\dots,\la^r)$ in
the Schur expansion of the Macdonald cumulant is a polynomial in $q,t$ with
nonnegative coefficients.
\end{conjecture}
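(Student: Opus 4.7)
The natural approach mirrors the historical route to Macdonald's original conjecture: reduce Schur positivity of the Macdonald cumulant $\ka(\la^1,\dots,\la^r)$ to Schur positivity of LLT cumulants via a $\N[q,t]$-positive decomposition, in direct analogy with the HHL decomposition of Macdonald polynomials into LLT polynomials (\cref{theo:HHL}). This is exactly the strategy announced as \cref{theo:LLT->Mac} in the introduction, and it makes the target conjecture conditional on the still-open \cref{conj:LLTSchurPos}.

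First, I would establish the cumulant-level analogue of the HHL formula (the stated \cref{theo:MacCumuIntoLLTCumu}), expressing $\ka(\la^1,\dots,\la^r)$ as a $t$-positive sum of LLT cumulants of $r$-colored tuples of ribbon shapes. The input is the expansion \eqref{eq:MacIntoLLT} applied to each $\tilde H_{\la^B}^{(q,t)}$ appearing in the defining formula \eqref{eq:MacdoCumu}: since $\la^B=\bigoplus_{i\in B}\la^i$, the columns of $\la^B$ inherit a natural coloring by $f$, and so the $\la'_j$-ribbons produced by HHL acquire an $r$-colored structure in which the color of each ribbon records the original partition that contributed its column. Using multilinearity of the $q$-partial cumulant over the choice of ribbon configurations and commuting the cumulant with the summation, one recovers $\ka(\la^1,\dots,\la^r)$ as a weighted sum of LLT cumulants indexed precisely by these $r$-colored tuples of ribbons, with weights $t^{\maj(\nnu)}$.

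Second, I would have to pass from the $\LLT^{\Mac}$ normalization of \eqref{eq:LLT-Mac-def} to the cospin normalization of \eqref{eq:LLT-cospin-def} without losing $q$-positivity, since \cref{conj:LLTSchurPos} is stated for the cospin variant. The crucial tool here is \cref{theo:G-cumu}: the $q$-shifts arising from the $q^{-a(\nnu)}$ factors that distinguish the two normalizations can be encoded as edge-data on a multigraph with vertex set $[1..r]$, and \cref{theo:G-cumu} then rewrites the cumulant of the $q$-shifted family as an $\N[q]$-positive combination of $q$-partial cumulants of sub-families indexed by set-partitions $\pi\in\PPP([1..r])$. Each such sub-family is itself (up to sign bookkeeping) an LLT cumulant in cospin normalization of an $r'$-colored tuple of ribbons with $r'=|\pi|\le r$, and assuming \cref{conj:LLTSchurPos} for each of them, additivity of Schur positivity yields the desired conclusion for $\ka(\la^1,\dots,\la^r)$.

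The main obstacle is clearly \cref{conj:LLTSchurPos} itself. Even in the uncolored case $r=1$, Schur positivity of LLT polynomials required the Kazhdan--Lusztig-theoretic proof of Grojnowski and Haiman, which yields no combinatorial formula; extending this to the $r$-colored setting, where the graph-theoretic interpretation of \cref{thm:cumasconnected} sums over colorings of all $f$-connected subgraphs, would most likely require either a categorification of the $q$-partial cumulant operator through some suitable Hecke-module subquotient, or a genuinely new combinatorial insight exploiting $f$-connectedness. The partial positivity results proved in the paper (monomial and fundamental quasisymmetric positivity via \cref{thm:kumuMonoPos}, Schur positivity for hooks and for $r$-colored melting lollipops, and $e$-positivity of vertical-shape cumulants after $q\to q+1$) provide strong evidence for \cref{conj:LLTSchurPos}, but a complete, unconditional proof of \cref{conj:MacSchurPos} appears to be out of reach of currently available techniques.
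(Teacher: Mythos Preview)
Your proposal is correct and mirrors exactly the paper's treatment: since \cref{conj:MacSchurPos} is stated as a conjecture, the paper does not prove it unconditionally but establishes precisely the conditional reduction you outline, namely \cref{theo:MacCumuIntoLLTCumu} (the cumulant-level HHL decomposition into $\LLT^{\Mac}$-cumulants) followed by \cref{theo:LLT->Mac} (the passage from the $\Mac$ to the $\cospin$ normalization via the multigraph encoding and \cref{theo:G-cumu}). Your assessment that the remaining obstacle is the open \cref{conj:LLTSchurPos} is accurate and matches the paper's own conclusion.
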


The first motivation for introducing LLT cumulants is to attack
\cref{conj:MacSchurPos}. Since Macdonald polynomials can be naturally
decomposed into LLT polynomials, it is natural to ask whether a similar
decomposition occurs for Macdonald cumulants. Moreover, it was
proved by Grojnowski and Haiman
\cite{GrojnowskiHaiman2007} that LLT polynomials are
Schur-positive, which gives an alternative proof of the Schur positivity of
Macdonald polynomials. Extensive
computer simulations performed by the authors using the SageMath computer algebra system
\cite{sagemath} tend us to believe that the result of Grojnowski
and Haiman might be a special case of Schur-positivity
that holds for the more general class of LLT cumulants. Therefore, we propose
the following conjecture:

  \begin{conjecture}
    \label{conj:LLTSchurPos}
For any $r$-colored tuple of skew shapes $(\nnu,f)$ and for any partition
$\la$ the coefficient
$[s_\la]\ka_{\LLT^{\cospin}}(\nnu,f)$ in
the Schur expansion of the LLT cumulant is a polynomial in $q$ with
nonnegative integer coefficients.
\end{conjecture}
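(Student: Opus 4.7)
The plan is to leverage the graph-theoretical interpretation of LLT cumulants (announced as \cref{thm:cumasconnected} in the excerpt) to eliminate the alternating signs built into the $q$-partial cumulant formula \eqref{EqDefCumulants}. First I would rewrite $\ka_{\LLT^{\cospin}}(\nnu,f)$ as a manifestly nonnegative weighted generating function over colorings of $f$-connected subgraphs of the LLT graph associated with $(\nnu,f)$, with weights that are monomials in $q$ and $\xx$. Without such a sign-free rewriting, extracting Schur coefficients from~\eqref{EqDefCumulants} directly seems hopeless because enormous cancellations occur between the terms indexed by different set-partitions.

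Given a positive combinatorial model, the natural next step is to extend the crystal-theoretic framework that Grojnowski and Haiman~\cite{GrojnowskiHaiman2007} used for ordinary LLT polynomials. Concretely, I would attempt to endow the set of $f$-connected colorings with a type-$A$ crystal structure that is compatible with the cospin statistic, so that each crystal component contributes one Schur function times a polynomial in $q$ with nonnegative integer coefficients. An equivalent route is to construct a dual equivalence graph on these colorings whose connected components are each Schur-positive. The partial results already established in the paper---monomial positivity \cref{thm:kumuMonoPos}, fundamental positivity \cref{theo:LLTFundamental}, and Schur positivity for hook shapes \cref{theo:LLTShurHooks}---serve as rigorous consistency checks, and in fact the fundamental positivity is the natural input for any dual-equivalence argument since it provides the correct refinement of the Schur basis.

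A complementary strategy, suitable for inductive attacks, is induction on the number of colors $r$: one would express $\ka_{\LLT^{\cospin}}(\nnu,f)$ as a telescoping combination of $\LLT^{\cospin}$'s for coarsenings of $f$, then exploit the graph-theoretical LLT relations mentioned in \cref{rem:LLTRel} to produce sign-reversing involutions cancelling the negative contributions in the Schur expansion. The proved cases \cref{theo:Lollipop} and \cref{theo:e-positivity} would supply base cases and narrow the form that such an involution must take. Composed with \cref{theo:LLT->Mac}, even a partial induction would immediately strengthen \cref{conj:MacSchurPos}.

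The principal obstacle in either approach is that the cumulant's inclusion-exclusion structure is fundamentally global, whereas all known proofs of Schur positivity for plain LLT polynomials (Kazhdan--Lusztig theory, affine Hecke algebras, crystals, dual equivalence graphs) are inherently local and have not been adapted to signed alternating sums. In particular, the natural candidates for a crystal operator acting on $f$-connected subgraphs must respect both the $\xx$-weight and the cospin statistic \emph{simultaneously}, and also interact correctly with the $f$-connectedness constraint; engineering such an operator---or, alternatively, a genuinely new Schur-like basis adapted to cumulant decomposition---is the hard part, and is presumably the reason the authors leave \cref{conj:LLTSchurPos} as a conjecture and only verify it for the distinguished families mentioned in the introduction.
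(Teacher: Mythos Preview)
The statement you are addressing is \cref{conj:LLTSchurPos}, which the paper explicitly leaves open as a conjecture; there is no proof in the paper to compare your proposal against. Your write-up is accordingly not a proof either but a survey of plausible attack lines, and you acknowledge as much in your final paragraph. That is an honest assessment of the situation, but it means the proposal cannot be evaluated as a proof: none of the three strategies you outline (crystal/dual-equivalence structure on $f$-connected colorings, sign-reversing involutions via induction on $r$, or a new Schur-adapted basis) is carried far enough to constitute an argument, and each would require substantial new ideas beyond what is in the paper.

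One specific caution: you appeal to \cite{GrojnowskiHaiman2007} as a ``crystal-theoretic framework,'' but that proof is geometric (via parity sheaves and Kazhdan--Lusztig theory), not crystal-theoretic, and there is still no purely combinatorial proof of Schur positivity for general LLT polynomials. So the step ``extend the crystal-theoretic framework of Grojnowski--Haiman'' presupposes a tool that does not exist even in the $r=1$ case. Any combinatorial approach to the cumulant conjecture would therefore also yield the first combinatorial proof of ordinary LLT Schur positivity, which is a well-known hard open problem in its own right.
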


\begin{example}
	Let $\nnu =
        ((2,2)/(1),(2),(1,1))$ be a tuple of three skew
        shapes. Consider two colorings $f\colon [1..3] \to [1..3]$ and $f'\colon
        [1..3] \to [1..2]$ defined by $f(i)=i$ for $1 \leq i \leq 3$
        and $f'(1) = f'(3) = 1, f'(2) = 2$. The corresponding cumulants
        $\ka_{\LLT^{\cospin}}(\nnu,f)$ and
        $\ka_{\LLT^{\cospin}}(\nnu,f')$ are equal to
        \begin{align*}
          \ka_{\LLT^{\cospin}}(\nnu,f) &= (q-1)^{-2}
                                         \bigg(\LLT^{\cospin}((2,2)/(1),(2),(1,1))-
                                         \LLT^{\cospin}((2,2)/(1))\cdot\\
                                         &\cdot\LLT^{\cospin}((2),(1,1))  - \LLT^{\cospin}((2))
                                         \LLT^{\cospin}((2,2)/(1),(1,1))
          \\ &-\LLT^{\cospin}((1,1))
                                         \LLT^{\cospin}((2,2)/(1),(2))\\
                                       &+2
          \LLT^{\cospin}((2,2)/(1))
          \LLT^{\cospin}((2))\LLT^{\cospin}((1,1))\bigg),\\
                    \ka_{\LLT^{\cospin}}(\nnu,f') &= (q-1)^{-1}
                                         \bigg(\LLT^{\cospin}((2,2)/(1),(2),(1,1))\\
                                       &-\LLT^{\cospin}((2))
                                         \LLT^{\cospin}((2,2)/(1),(1,1))\bigg).
          \end{align*}
        Expanding them in the basis of Schur functions we have
	\begin{align*}
		\ka_{\LLT^{\cospin}}(\nnu,f) &= (q^2+2q+2)s_{(2, 2, 1, 1, 1)} + (q+2)s_{(2, 2, 2, 1)} \\
		&+ (q^2+2q+2)s_{(3, 1,	1, 1, 1)} + (2q+4)s_{(3, 2, 1, 1)} \\
		&+ 2s_{(3, 2, 2)} + s_{(3, 3, 1)} +	(q+2)s_{(4, 1, 1, 1)} + s_{(4, 2, 1)},\\ \\
          \ka_{\LLT^{\cospin}}(\nnu,f') &= (q^3+q^2)s_{(2, 2, 1, 1, 1)} + (q^2+q)s_{(2, 2, 2, 1)} \\
          &+ (q^3+q^2)s_{(3, 1, 1, 1, 1)} + (2q^2+2q)s_{(3, 2, 1, 1)} + (2q+1)s_{(3, 2, 2)} \\
          &+ (q+1)s_{(3, 3, 1)} + (q^2+q)s_{(4, 1, 1, 1)} + (q+1)s_{(4, 2, 1)}.
	\end{align*}
\end{example}

In the following, we prove that \cref{conj:LLTSchurPos} implies
\cref{conj:MacSchurPos}. In order to do this, we express Macdonald
cumulants as a positive linear combination of LLT cumulants,
generalizing the classical decomposition from \cref{theo:HHL} to
cumulants, and we show that Schur-positivity of LLT
cumulants can be put into the following hierarchy: Schur-positivity of
$\ka_{\LLT^{\cospin}}(\nnu,f)$ implies Schur-positivity of
$\ka_{\LLT^{\Mac }}(\nnu,f)$, which further implies Schur-positivity of
$\ka_{\LLT}(\nnu,f)$.

\begin{remark}
	In fact, the chain of implications mentioned above is valid
        only when we restrict $\nnu$ to be a sequence of ribbon shapes due to the definition of the normalization $\LLT^{\Mac}(\nnu)$ (see \eqref{eq:LLT-Mac-def}). However, Schur-positivity of
	$\ka_{\LLT^{\cospin}}(\nnu,f)$ for all $r$-colored tuples of skew-shapes $(\nnu,f)$ implies Schur-positivity of
	$\ka_{\LLT}(\nnu,f)$ for all $r$-colored tuples of
        skew-shapes $(\nnu,f)$, which will be clear from the proof of \cref{theo:LLT->Mac}.
\end{remark}

\subsubsection{Decomposition of Macdonald cumulants}

\begin{theorem}
  \label{theo:MacCumuIntoLLTCumu}
    Let $\lambda^1,\dots,\lambda^r$ be partitions. Then, the following
    identity holds true:
    \begin{equation}
      \label{eq:MacCumuIntoLLT}
      \ka(\la^1,\dots,\la^r)(\xx;q,t) =
      \sum_{\nnu}t^{\maj(\nnu)}\ka_{\LLT^{\Mac}}(\nnu,f_{\cc}),
    \end{equation}
    where we sum over all tuples of ribbons whose bottom, far-right cell has content
    $0$ and such that $|\nnu_j| = (\la^{[1..r]})'_j$ for $1 \leq
    j \leq \ell(\la^{[1..r]})$ (i.e. the size
    of the $j$-th ribbon is equal to the length of the $j$-th column of
    $\lambda^{[1..r]}$) and $f_{\cc}$ is the canonical coloring
    associated with $\lambda^1,\dots,\lambda^r$.
  \end{theorem}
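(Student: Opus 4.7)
The plan is to substitute the HHL formula from \cref{theo:HHL} into the defining expression of the Macdonald cumulant and then reorganize the resulting sum. From \eqref{eq:MacdoCumu} and \eqref{EqDefCumulants}, one has
\begin{equation*}
\ka(\lambda^1,\dots,\lambda^r) = (q-1)^{1-r}\sum_{\pi\in\PPP([1..r])}(-1)^{|\pi|-1}(|\pi|-1)!\prod_{B\in\pi}\tilde{H}_{\lambda^B}^{(q,t)},
\end{equation*}
and expanding each $\tilde{H}_{\lambda^B}^{(q,t)}$ via \cref{theo:HHL} yields a $t$-weighted sum of $\LLT^{\Mac}(\bnu^B)$ over ribbon tuples $\bnu^B$ with $|\bnu^B_j|=(\lambda^B)'_j$.

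The key combinatorial step is a bijection: for each $\pi\in\PPP([1..r])$, the restriction map
\[
\nnu\longmapsto\bigl\{(\nnu,f_{\cc})^B\bigr\}_{B\in\pi}
\]
is a bijection between global tuples $\nnu$ of ribbons with $|\nnu_j|=(\lambda^{[1..r]})'_j$ and collections $\{\bnu^B\}_{B\in\pi}$ of tuples of the HHL type for the factors $\tilde{H}_{\lambda^B}$. The defining property of the canonical coloring (columns of equal height are ordered by color) is what makes this work: it ensures that $(\nnu,f_{\cc})^B$, inheriting its component order from $\lambda^{[1..r]}$, has sizes in precisely the decreasing order $(\lambda^B)'_1,(\lambda^B)'_2,\dots$ prescribed by the HHL formula for $\lambda^B$. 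The content-zero normalization of the bottom-far-right cells is preserved componentwise by restriction.

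The weight $t^{\maj}$ factors compatibly: since $\maj$ is additive over components by \eqref{eq:maj}, one has $\maj(\nnu)=\sum_{B\in\pi}\maj((\nnu,f_{\cc})^B)$ for every $\pi$, and hence $t^{\maj(\nnu)}=\prod_{B\in\pi}t^{\maj((\nnu,f_{\cc})^B)}$. Applying the bijection and interchanging the summations over $\nnu$ and $\pi$ gives
\begin{equation*}
\ka(\lambda^1,\dots,\lambda^r)=\sum_{\nnu}t^{\maj(\nnu)}(q-1)^{1-r}\sum_{\pi\in\PPP([1..r])}(-1)^{|\pi|-1}(|\pi|-1)!\prod_{B\in\pi}\LLT^{\Mac}((\nnu,f_{\cc})^B),
\end{equation*}
and the inner expression is, by definition \eqref{eq:defLLTkumu}, equal to $\ka_{\LLT^{\Mac}}(\nnu,f_{\cc})$.

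The main obstacle is the careful verification of the bijection, in particular that the canonical coloring is exactly what aligns the restriction $(\nnu,f_{\cc})^B$ with the HHL-prescribed order of sizes for $\lambda^B$, and that the inverse assembly of a global $\nnu$ from a collection of restrictions is well-defined and unique. Once these combinatorial compatibilities are in place, the remainder of the proof is the formal substitution and reindexing outlined above.
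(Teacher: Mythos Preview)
Your proposal is correct and follows essentially the same approach as the paper: expand each $\tilde{H}_{\lambda^B}$ via \cref{theo:HHL}, use the restriction bijection $\nnu\mapsto\{(\nnu,f_{\cc})^B\}_{B\in\pi}$ so that all products are indexed by the same global set of tuples $\nnu$, invoke the additivity of $\maj$ to factor out $t^{\maj(\nnu)}$, and then recognize the inner $\pi$-sum as $\ka_{\LLT^{\Mac}}(\nnu,f_{\cc})$. The paper's proof is terser---it simply asserts that the product expansion ``runs over the same set'' and that $\maj$ is additive---whereas you spell out explicitly why the canonical coloring is what makes the bijection well-defined; this is a helpful elaboration but not a different argument.
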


    \begin{proof}
      It is a direct consequence of the
      interpretation of the Macdonald cumulant as the $q$-partial cumulant of the
      canonically $r$-colored partition and of \cref{theo:HHL}. Indeed, note that for any subset $B
      \subset [1..r]$, \cref{theo:HHL} applied to $\lambda=\lambda^B$ gives
  \[          \tilde{H}_{\lambda^B}^{(q,t)} =
    \sum_{\nnu}t^{\maj(\nnu)}\LLT^{\Mac}(\nnu),\]
where we sum over skew-partitions whose $j$-th
    element is a ribbon of length $(\la^B)'_j$
    whose bottom, far-right cell has content
    $0$. In particular, for any set-partition $\pi \in \PPP([1..r])$,
    one has
      \[          \prod_{B \in \pi}\tilde{H}_{\lambda^B}^{(q,t)} =
    \sum_{\nnu}\prod_{B \in \pi}t^{\maj((\nnu,f_{\cc})^B)}\LLT^{\Mac}((\nnu,f_{\cc})^B),\]
    where the sum runs over the same set as the summation in \eqref{eq:MacIntoLLT}.

	Strictly from the definition~\eqref{eq:maj} of $\maj$, one
      has $\sum_{B \in \pi}\maj((\nnu,f_{\cc})^B) =
      \maj(\nnu)$ for any set-partition $\pi \in \PPP([1..r])$, and formula \eqref{eq:MacCumuIntoLLT} follows.
  \end{proof}

  \begin{theorem}
    \label{theo:LLT->Mac}
    Suppose that \cref{conj:LLTSchurPos} holds true. Then, for any
    $r$-colored tuple of skew shapes $(\nnu,f)$ and for any partition
$\nu$, the coefficients
\[ [s_\nu]\ka_{\LLT^{\Mac}}(\nnu,f)\in
  \Z_{\geq 0}[q], \qquad [s_\nu]\ka_{\LLT}(\nnu,f)\in
  \Z_{\geq 0}[q]\]
are polynomials
in $q$ with nonnegative integer coefficients. In particular,
\cref{conj:MacSchurPos} holds true.
\end{theorem}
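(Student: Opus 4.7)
The plan is to propagate Schur-positivity from $\ka_{\LLT^{\cospin}}$ to $\ka_{\LLT^{\Mac}}$ and $\ka_{\LLT}$ by means of \cref{theo:G-cumu}, and then to deduce Schur-positivity of the Macdonald cumulants using the decomposition in \cref{theo:MacCumuIntoLLTCumu}.

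The key observation driving the first reduction is that both exponents appearing in the normalization changes, namely $\min\inv(\nnu)$ (through the identity $\LLT(\nnu)=q^{\min\inv(\nnu)}\LLT^{\cospin}(\nnu)$) and $a(\nnu)$ (in the ribbon case, through $\LLT(\nnu)=q^{a(\nnu)}\LLT^{\Mac}(\nnu)$), are \emph{two-local} in the tuple: each decomposes as a sum $\sum_{\{i,j\}}m_{ij}$ of nonnegative integers depending only on the pair of shapes $(\nu^i,\nu^j)$. For $a$ this is immediate from the definition. For $\min\inv$ one invokes the Schilling--Shimozono--White identity $\min\inv(\nnu)=|\Inv(T)\setminus\Inv_{\cospin}(T)|$ (valid for any $T\in\SSYT(\nnu)$) together with two facts that follow from inspecting the condition $0<\tilde c(\square')-\tilde c(\square)<\ell$: all attacks lie between distinct shapes, and whether a pair $(\square,\square')$ belongs to $\Inv_{\cospin}$ depends only on the restriction of $T$ to the two shapes containing $\square$ and $\square'$. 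Applying SSW to each $2$-tuple $(\nu^i,\nu^j)$ separately then produces the desired constant $m_{ij}$ satisfying
\[\min\inv\bigl((\nnu,f)^B\bigr)\;=\;\sum_{\{i,j\}\subseteq f^{-1}(B)} m_{ij}.\]

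Grouping pair contributions by the $f$-colors of $i$ and $j$ defines a multigraph $G$ on vertex set $[1..r]$ such that, with $u_B:=\LLT^{\cospin}((\nnu,f)^B)$, one has $u(\LLT)_B=q^{e_B(G)}u_B=u_B^G$. Then \cref{theo:G-cumu} expresses $\ka_{\LLT}(\nnu,f)=\ka^{(q)}_{[1..r]}(\uu^G)$ as a $q$-positive combination of cumulants $\ka^{(q)}_{\pi}(\uu(\pi))$, and each of these is identified with the $\LLT^{\cospin}$-cumulant $\ka_{\LLT^{\cospin}}(\nnu,\pi\circ f)$ of the coarser coloring obtained by merging $f$-colors within the blocks of $\pi\in\PPP([1..r])$. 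By \cref{conj:LLTSchurPos}, every such coarser cumulant is Schur-positive in $q$, so $\ka_{\LLT}(\nnu,f)$ is too; the same argument with $a$ in place of $\min\inv$ yields Schur-positivity of $\ka_{\LLT^{\Mac}}(\nnu,f)$ for ribbon tuples. Finally, \cref{theo:MacCumuIntoLLTCumu} exhibits $\ka(\la^1,\dots,\la^r)$ as a $\Z_{\geq 0}[t]$-linear combination of ribbon $\ka_{\LLT^{\Mac}}(\nnu,f_{\cc})$'s, whence Schur-positivity in $q,t$ follows at once.

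The step I expect to be most delicate is the two-locality of $\min\inv$: a priori this is a global minimum over semistandard fillings, and its additive decomposition over shape pairs is not evident, so the argument really hinges on feeding the SSW identity into a careful pair-by-pair analysis of $\Inv_{\cospin}$. Nonnegativity $m_{ij}\geq 0$ then comes essentially for free, since single shapes carry no attacks and hence the pair minimum automatically dominates the vanishing singleton contributions.
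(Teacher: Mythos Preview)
Your proposal is correct and follows essentially the same approach as the paper: both use the Schilling--Shimozono--White identity to show that $\min\inv$ and $a$ are two-local over shape pairs (hence arise from multigraphs on the color set), apply \cref{theo:G-cumu} to express $\ka_{\LLT}$ and $\ka_{\LLT^{\Mac}}$ as $q$-positive combinations of $\LLT^{\cospin}$-cumulants of coarser colorings, and then invoke \cref{theo:MacCumuIntoLLTCumu}. The paper additionally verifies the inequality $a(\nnu)\leq\min\inv(\nnu)$ (establishing the hierarchy $\LLT^{\cospin}\Rightarrow\LLT^{\Mac}\Rightarrow\LLT$ mentioned before the theorem), but this is not needed for the theorem as stated, so your omission of it is harmless.
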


\begin{proof}
Recall the definiton \cref{eq:defLLTkumu} of LLT cumulants. We will
show that there exist graphs $G \subset G'$ such that
\begin{align}
  \label{eq:pomocnicze1}
  \kumu_{[1..r]}^{(q)}(\uu(\LLT)) &=
  \kumu_{[1..r]}^{(q)}(\uu(\LLT^{\cospin})^{G'}),\\
  \kumu_{[1..r]}^{(q)}(\uu(\LLT^{\Mac})) &=
                                           \kumu_{[1..r]}^{(q)}(\uu(\LLT^{\cospin})^{G}).
                                           \label{eq:pomocnicze2}
\end{align}
Then the statements
  follow directly from \cref{theo:G-cumu} and \cref{eq:MacCumuIntoLLT}.

  Notice that the family of nonnegative
  integers $(e_B)_{B \subset V}$ indexed by subsets of the set $V$ is the
  number of edges in some graph $G = (V,E)$ linking vertices in $B$ if and only if
  \begin{equation}
    \label{eq:GraphCondition}
    e_B \geq \sum_{b \in B} e_{\{b\}}\ \ \ \text{ and }\ \ \ e_B = \sum_{\substack{B'\subset B,\\|B'| = 2}} e_{B'} - (|B|-2) \sum_{b
      \in B} e_{\{b\}}.
  \end{equation}
  Indeed, the inequality corresponds to $e_B$ counting all the loops on vertices from $B$, and the equality counts the edges between each pair of vertices from $B$ minus the overcounted loops.
  
  We first prove that there exist graphs $G,G'$ such that
  \eqref{eq:pomocnicze1} and \eqref{eq:pomocnicze2} hold. To show \eqref{eq:pomocnicze1}, consider $e_B = \min_{T \in \SSYT((\nnu,f)^B)} \inv(T) =
  |\Inv(T)\setminus \Inv_{\cospin}(T)|$, which does not depend on the
  choice of $T \in \SSYT((\nnu,f)^B)$.
  Then the conditions in \eqref{eq:GraphCondition} are satisfied
  since for a pair of boxes $\square \in
  (\nnu,f)^{\{i\}}$ and $\square' \in
  (\nnu,f)^{\{j\}}$, one has $(\square,\square') \in \Inv(T)\setminus
  \Inv_{\cospin}(T)$ if and only if $(\square,\square') \in \Inv(T_{\{i,j\}})\setminus
  \Inv_{\cospin}(T_{\{i,j\}})$, where $T_{\{i,j\}}$ is a tableau $T$
  restricted to $(\nnu,f)^{\{i,j\}}$.

  Similarly, for
  \[e_B = a((\nnu,f)^B) = \sum_{\square \in \Des((\nnu,f)^B)}|\{\square':
    c(\square')=c(\square), \tilde{c}(\square')>\tilde{c}(\square)\}|,\]
  one has
  \begin{multline}
    \label{eq:pomoc''}
    \sum_{\square \in \Des((\nnu,f)^B)}|\{\square'\colon
    c(\square')=c(\square), \tilde{c}(\square')>\tilde{c}(\square)\}| =
    \\  \sum_{b \in B}\sum_{\square \in \Des((\nnu,f)^{\{b\}})}\sum_{b'
      \in B}|\{\square' \in (\nnu,f)^{\{b'\}}\colon
    c(\square')=c(\square), \tilde{c}(\square')>\tilde{c}(\square)\}|.
  \end{multline}
  Note that for any subset $A \subset B$ and for each pair of boxes
  $(\square,\square') \in (\nnu,f)^{A}$, there is a uniquely associated
  pair of boxes $(\square,\square') \in (\nnu,f)^{B}$ and their
  contents are identical in $(\nnu,f)^{A} $ and $(\nnu,f)^{B}$, while
  their shifted contents might be different but the relation
  $\tilde{c}(\square')>\tilde{c}(\square)$ is again the same in both
  $(\nnu,f)^{A} $ and $(\nnu,f)^{B}$.
This observation together with \eqref{eq:pomoc''} implies that the
quantities $e_B$ satisfy \eqref{eq:GraphCondition}. This proves \eqref{eq:pomocnicze2}.

Finally, we prove that $G \subset G'$, which is equivalent to proving that $a((\nnu,f)^{B}) \leq \min_{T \in
  \SSYT((\nnu,f)^{B})} \inv(T)$. Let $\square \in
\Des((\nnu,f)^{B})$ and $\square' \in (\nnu,f)^{B}$ be such that
$c(\square')=c(\square), \tilde{c}(\square')>\tilde{c}(\square)$. For any
$T \in
  \SSYT((\nnu,f)^{B})$, we necessarily have $T(\square)>
  T(\square_{\downarrow})$. Therefore, either $(\square,\square') \in \Inv(T)$ or
  $(\square',\square_{\downarrow}) \in \Inv(T)$ (or both). Summing
  over all $\square \in
\Des((\nnu,f)^{B})$ and \[\square' \in \{\square' \in (\nnu,f)^{B}:
  c(\square')=c(\square), \tilde{c}(\square')>\tilde{c}(\square)\},\]
we
have that $a((\nnu,f)^{B}) \leq \inv(T)$ for any $T \in
  \SSYT((\nnu,f)^{B})$. Thus, $a((\nnu,f)^{B}) \leq \min_{T \in
  \SSYT((\nnu,f)^{B})} \inv(T)$, which is equivalent to the fact that
$G \subset G'$. This implies that $[s_\nu]\ka_{\LLT^{\Mac}}(\nnu,f)$ and $[s_\nu]\ka_{\LLT}(\nnu,f)$ are indeed polynomials in $q$, which finishes the proof.
\end{proof}

\section{Graph colorings}
\label{sec:GraphCol}

In the following, we interpret LLT polynomials as the generating
functions of colorings of certain directed graphs. This viewpoint
provides a natural graph-theoretic interpretation of LLT cumulants as
well as various positivity properties generalizing some recent results~\cite{AlexanderssonSulzgruber2022,Dolega2019}.



\subsection{LLT graphs and cumulants of $r$-colored LLT graphs}
\label{subsec:LLTGraphs}

\begin{definition} \label{def:LLTgraph}
	We call $G$ an \emph{LLT graph} if it is a finite directed graph with
        three types of edges, visually depicted as $\rightarrow$,
        $\twoheadrightarrow$, and $\Rightarrow$, which we call
        \emph{edges of type I}, \emph{of type II}, and \emph{double
          edges}, respectively. Denote the corresponding sets of edges
        by $E_1(G)$, $E_2(G)$, and $E_d(G)$. Additionally, write $\mathscr{G}$ for the $\Z[q]$-module spanned by LLT graphs and $\mathscr{G}_1 < \mathscr{G}$ for the
        submodule generated by LLT graphs with only edges of type II ($E_1(G) = E_d(G) = \emptyset$).
      \end{definition}

Let $\QSym$ denote the ring of quasi-symmetric functions over
$\mathbb{Z}[q]$. We recall that a quasisymmetric function $f$ is a
power series in variables $x_1,x_2,\dots$ of a bounded degree such
that for any sequence of positive integers $(\alpha_1,\dots,\alpha_n)$
the coefficients of the monomial $[x_{i_1}^{\alpha_1}\cdots
x_{i_n}^{\alpha_n}]f$ in $f$ is the same for all possible choices of
indices $i_1 < \dots < i_n$ (see~\cite{Gessel1984} for more details on
$\QSym$). With an LLT graph $G$ we associate its
\emph{LLT polynomial}:
	\begin{equation} \label{def:graphllt}
	\LLT(G):=\sum\limits_{f:V(G)\rightarrow\mathbb{N}} \left(\prod\limits_{(u,v)\in E(G)} \varphi_f(u,v)\right)\cdot \left(\prod\limits_{v\in V(G)} x_{f(v)}\right),
	\end{equation}
	with
	\begin{equation} \label{def:graphcol}
	\varphi_f(u,v)=\begin{cases} [f(u)>f(v)] & \text{for } (u,v)\in E_1(G); \\
	[f(u)\ge f(v)] & \text{for } (u,v)\in E_2(G); \\
	q[f(u)>f(v)]+[f(u)\le f(v)] & \text{for } (u,v)\in E_d(G), \end{cases}
	\end{equation}
	where $[A]$ is the characteristic function of condition $A$, i.e., is equal to $1$ if $A$ is true and $0$ otherwise.

There is an obvious way to associate an LLT graph $G_{\nnu}$ to a
sequence of skew shapes $\nnu$ such that $\LLT(G_{\nnu}) =
\LLT(\nnu)$. To be precise, vertices correspond to cells, edges of
type I go from a cell $\square$ to  $ \square_{\downarrow}$, edges of type II go from a cell $\square$ to $\square_{\leftarrow}$, and double edges connect cells that correspond to inversions (see \cref{fig:lltgraph}).

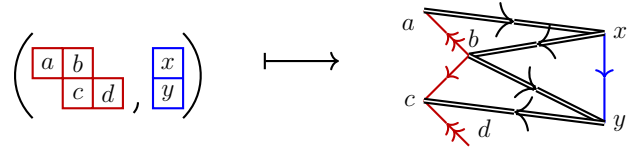
\begin{figure}
	\centering
	\begin{tikzpicture}[scale=.4, every node/.style={scale=0.8}]
	\draw[thick, red] (0,0) -- (-1,0) -- (-1,-1) -- (2,-1) -- (2,-2) -- (0,-2) -- (0,0) -- (1,0) -- (1,-2);
	\draw[thick, blue] (3,-1) -- (3,0) -- (4,0) -- (4,-2) -- (3,-2) -- (3,-1) -- (4,-1);
	\node at (-1,-3) {};
	\draw[thick] (2.5,-1.8) arc (0:-45:.5);
	\draw[thick] (-1,.5) arc (135:225:2);
	\draw[thick] (4,.5) arc (45:-45:2);
	
	\node at (-.5,-.5) {$a$};
	\node at (.5,-.5) {$b$};
	\node at (.5,-1.5) {$c$};
	\node at (1.5,-1.5) {$d$};
	\node at (3.5,-.5) {$x$};
	\node at (3.5,-1.5) {$y$};
	\end{tikzpicture}
	\begin{tikzpicture}[scale=1]
	\draw[thick, |->] (0,0) -- (1,0);
	\node at (-.5,-1) {};
	\end{tikzpicture}
	\hspace{.5cm}
	\begin{tikzpicture}[scale=0.6, every node/.style={scale=0.8}]
	\draw[thick, red] (0,0) -- (.5,-.5);
	\draw[thick, red, <<-] (.5,-.5) -- (1,-1);
	\draw[thick, red, ->] (1,-1) -- (.5,-1.5);
	\draw[thick, red] (.5,-1.5) -- (0,-2);
	\draw[thick, red] (0,-2) -- (.5,-2.5);
	\draw[thick, red, <<-] (.5,-2.5) -- (1,-3);
	
	\draw[thick, blue, ->] (4,-.5) -- (4,-1.5);
	\draw[thick, blue] (4,-1.5) -- (4,-2.5);
	
	\draw[thick, double, ->] (0,0) -- (2,-.25);
	\draw[thick, double] (2,-.25) -- (4,-.5);
	\draw[thick, double, ->] (4,-.5) -- (2.5,-.75);
	\draw[thick, double] (2.5,-.75) -- (1,-1);
	\draw[thick, double, ->] (1,-1) -- (2.5,-1.75);
	\draw[thick, double] (2.5,-1.75) -- (4,-2.5);
	\draw[thick, double, ->] (4,-2.5) -- (2,-2.25);
	\draw[thick, double] (2,-2.25) -- (0,-2);

	\node[below left] at (0,0) {$a$};
	\node[above] at (1.1,-1) {$b$};
	\node[left] at (0,-2) {$c$};
	\node[above right] at (1,-3) {$d$};
	\node[right] at (4,-.5) {$x$};
	\node[right] at (4,-2.5) {$y$};
	\end{tikzpicture}
	\caption{The LLT graph corresponding to ((3,2)/(1), (1,1)).}
	\label{fig:lltgraph}
      \end{figure}

 Let $G$ be an LLT graph and let $\vec{e_i} \in E_i(G)$ for $i \in \{1,d\}$. Define the local
 transformation
 \[\pi_{\vec{e_i}}(G) = \begin{cases} G\setminus\{\vec{e_1}\} -
    G_{\vec{e_1} \to \cev{e_2}}&\text{ for } i=1,\\ qG\setminus\{\vec{e_d}\} +
    (1-q)G_{\vec{e_d} \to \cev{e_2}}&\text{ for } i=d,\end{cases}\]
  where $\vec{e_i} \to \cev{e_j}$ ($\vec{e_i} \to \vec{e_j}$,
  respectively) denotes replacing the directed edge $\vec{e_i}$ of type $i$ by the
  edge of type $j$ with the opposite (the same, respectively)
  direction.
  
  \begin{example}
  	We have
  	\[
  	\pi_{(1,2)}\left(\vcenter{\hbox{
  			\begin{tikzpicture}[scale=.8]
  			\draw[thick,-{stealth}] (-.5,0) -- (-.125,.66);
  			\draw[thick] (-.125,.66) -- (0,.88);
  			\draw[thick,-{stealth}{stealth}] (0,.88) -- (.375,.22);
  			\draw[thick] (.375,.22) -- (.5,0);
  			\draw[thick,double] (0,0) -- (.5,0);
  			\draw[thick,double,-{stealth}] (-.5,0) -- (.16,0);
  			\node[below left] at (-.5,0) {$1$};
  			\node[above] at (0,.88) {$2$};
  			\node[below right] at (.5,0) {$3$};
  			\end{tikzpicture}}}\right)
  	=
  	\vcenter{\hbox{
  			\begin{tikzpicture}[scale=.8]
  			\draw[thick,-{stealth}{stealth}] (0,.88) -- (.375,.22);
  			\draw[thick] (.375,.22) -- (.5,0);
  			\draw[thick,double] (0,0) -- (.5,0);
  			\draw[thick,double,-{stealth}] (-.5,0) -- (.16,0);
  			\node[below left] at (-.5,0) {$1$};
  			\node[above] at (0,.88) {$2$};
  			\node[below right] at (.5,0) {$3$};
  			\end{tikzpicture}}}
  	-
  	\vcenter{\hbox{
  			\begin{tikzpicture}[scale=.8]
  			\draw[thick,-{stealth}{stealth}] (0,.88) -- (-.375,.22);
  			\draw[thick] (-.375,.22) -- (-.5,0);
  			\draw[thick,-{stealth}{stealth}] (0,.88) -- (.375,.22);
  			\draw[thick] (.375,.22) -- (.5,0);
  			\draw[thick,double] (0,0) -- (.5,0);
  			\draw[thick,double,-{stealth}] (-.5,0) -- (.16,0);
  			\node[below left] at (-.5,0) {$1$};
  			\node[above] at (0,.88) {$2$};
  			\node[below right] at (.5,0) {$3$};
  			\end{tikzpicture}}},\]
  	\[\pi_{(1,3)}\left(\vcenter{\hbox{
  			\begin{tikzpicture}[scale=.8]
  			\draw[thick,-{stealth}] (-.5,0) -- (-.125,.66);
  			\draw[thick] (-.125,.66) -- (0,.88);
  			\draw[thick,-{stealth}{stealth}] (0,.88) -- (.375,.22);
  			\draw[thick] (.375,.22) -- (.5,0);
  			\draw[thick,double] (0,0) -- (.5,0);
  			\draw[thick,double,-{stealth}] (-.5,0) -- (.16,0);
  			\node[below left] at (-.5,0) {$1$};
  			\node[above] at (0,.88) {$2$};
  			\node[below right] at (.5,0) {$3$};
  			\end{tikzpicture}}}\right)
  	=
  	q\vcenter{\hbox{
  			\begin{tikzpicture}[scale=.8]
  			\draw[thick,-{stealth}] (-.5,0) -- (-.125,.66);
  			\draw[thick] (-.125,.66) -- (0,.88);
  			\draw[thick,-{stealth}{stealth}] (0,.88) -- (.375,.22);
  			\draw[thick] (.375,.22) -- (.5,0);
  			\node[below left] at (-.5,0) {$1$};
  			\node[above] at (0,.88) {$2$};
  			\node[below right] at (.5,0) {$3$};
  			\end{tikzpicture}}}
  	+
  	(1-q)\vcenter{\hbox{
  			\begin{tikzpicture}[scale=.8]
  			\draw[thick,-{stealth}] (-.5,0) -- (-.125,.66);
  			\draw[thick] (-.125,.66) -- (0,.88);
  			\draw[thick,-{stealth}{stealth}] (0,.88) -- (.375,.22);
  			\draw[thick] (.375,.22) -- (.5,0);
  			\draw[thick,-{stealth}{stealth}] (.5,0) -- (-.16,0);
  			\draw[thick] (0,0) -- (-.5,0);
  			\node[below left] at (-.5,0) {$1$};
  			\node[above] at (0,.88) {$2$};
  			\node[below right] at (.5,0) {$3$};
  			\end{tikzpicture}}}.\]
  \end{example}
  
  We define
  \[ \pi(G) := \bigg(\prod_{\vec{e} \in E_1(G)\cup
      E_d(G)}\pi_{\vec{e}}\bigg)(G)\]
  as the concatenation of local transformations over all edges of type
  I and double edges (these transformations are commutative so their order does
  not matter and this concatenation is well-defined). Note that local transformations kill all edges of type I and $d$ and thus, the map  $\pi\colon \mathscr{G}\to \mathscr{G}_1$ is well-defined.
In fact, we claim that the map $\LLT\colon \mathscr{G} \to \QSym$ is a
well-defined surjective
homomorphism such that $\LLT(G) = \LLT\circ\pi(G)$ for every LLT graph.

\begin{lemma} \label{lem:arrowrel}
  For $\mathscr{G}$ and $\mathscr{G}_1$ as in \cref{def:LLTgraph}, the following diagram is commutative:
 \[
  \begin{tikzcd}
    \mathscr{G} \arrow{r}{\pi} \arrow[swap]{dr}{\LLT} & \mathscr{G}_1 \arrow{d}{\LLT} \\
     & \QSym
  \end{tikzcd}
\] 
\end{lemma}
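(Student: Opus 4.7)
The plan is to reduce the statement to a single-edge local identity and then iterate. Specifically, I will show that for every LLT graph $G$ and every edge $\vec{e} \in E_1(G) \cup E_d(G)$, one has $\LLT(G) = \LLT(\pi_{\vec{e}}(G))$, where $\LLT$ is extended $\Z[q]$-linearly to $\mathscr{G}$. Given this, commutativity of the diagram follows by straightforward induction on $|E_1(G)| + |E_d(G)|$: each application of a local transformation $\pi_{\vec{e}}$ strictly reduces this quantity (by removing or converting $\vec{e}$ into a type II edge) in every resulting summand, and the definition $\pi(G) = \bigl(\prod_{\vec{e}} \pi_{\vec{e}}\bigr)(G)$ makes sense because the $\pi_{\vec{e}}$ commute pairwise: each $\pi_{\vec{e}}$ modifies only the edge $\vec{e}$ and leaves the rest of the graph intact, so applying two of them to distinct edges produces the same four-term combination regardless of the order.

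The single-edge identities reduce, after fixing a coloring $f \colon V(G) \to \N$ and extracting the factor $\varphi_f(\vec{e})$ from the defining sum \eqref{def:graphllt}, to elementary identities between indicator functions. For a type I edge $\vec{e_1} = (u,v)$, the identity
\[
[f(u) > f(v)] \;=\; 1 - [f(u) \leq f(v)] \;=\; 1 - [f(v) \geq f(u)]
\]
shows that the contribution of $\vec{e_1}$ in $\LLT(G)$ decomposes as the contribution to $\LLT(G \setminus \{\vec{e_1}\})$ minus the contribution to $\LLT(G_{\vec{e_1} \to \cev{e_2}})$. Summing over $f$, this gives $\LLT(G) = \LLT(G \setminus \{\vec{e_1}\}) - \LLT(G_{\vec{e_1} \to \cev{e_2}}) = \LLT(\pi_{\vec{e_1}}(G))$. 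For a double edge $\vec{e_d} = (u,v)$, the analogous identity is
\[
q\,[f(u) > f(v)] + [f(u) \leq f(v)] \;=\; q + (1-q)\,[f(v) \geq f(u)],
\]
which yields $\LLT(G) = q\,\LLT(G \setminus \{\vec{e_d}\}) + (1-q)\,\LLT(G_{\vec{e_d} \to \cev{e_2}}) = \LLT(\pi_{\vec{e_d}}(G))$ in exactly the same fashion.

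There is no substantial obstacle here: the whole proof is a bookkeeping argument built on two one-line indicator-function identities, together with the commutativity of local operations. The only point that deserves mention is that $\LLT$ must first be shown to be well-defined on $\mathscr{G}$, which is immediate since \eqref{def:graphllt} extends $\Z[q]$-linearly from LLT graphs to their module span, and on $\mathscr{G}_1$ the range lies in $\QSym$ because fillings of graphs with only type II edges are controlled entirely by weak-inequality constraints.
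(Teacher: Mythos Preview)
Your proof is correct and follows the same approach as the paper: establish invariance of $\LLT$ under each local transformation $\pi_{\vec{e}}$ and iterate. The paper's proof is a brief sketch that declares this invariance ``straightforward from the definition'' and cites F\'eray~\cite{Feray2015} for the fact that $\LLT\colon \mathscr{G}_1 \to \QSym$ is a well-defined surjective homomorphism; you have simply unpacked the indicator-function identities that make the invariance explicit. Your closing remark about why the image lies in $\QSym$ is a bit hand-wavy---weak-inequality constraints alone do not automatically yield quasisymmetry---so it would be cleaner to cite F\'eray there, as the paper does.
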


\begin{proof}
  Let $G$ be an LLT graph. It was proved by F\'eray~\cite{Feray2015} that $\LLT\colon \mathscr{G}_1
  \to \QSym$ is a well-defined surjective homomorphism. Moreover, it
  is straightforward from the definition of the map $\LLT$ that it is
  invariant under the local
  transformations, i.e.,~for every $\vec{e} \in E_1(G)\cup
      E_d(G)$, one has $\LLT(\pi_{\vec{e}}(G)) = \LLT(G)$. Thus,
      $\LLT(G) = \LLT(\pi(G))$, which finishes the proof.	
      \end{proof}

      \begin{remark}
        \label{rem:LLTRel}
Let $\widehat{\mathscr{G}}_1<\mathscr{G}_1$ be a
       submodule of $\mathscr{G}_1 $ spanned by acyclic graphs. The main result of F\'eray~\cite{Feray2015} is an explicit description of the kernel of
        the map $\LLT\colon \widehat{\mathscr{G}}_1 \to \QSym$ by using the \emph{cyclic inclusion-exclusion principle}. This
        description together with \cref{lem:arrowrel} can be a priori
        used to describe the kernel of the morphism
        $\LLT\colon \mathscr{G} \to \QSym$, thus to understand all the
        relations between LLT graphs under the $\LLT$ morphism. Additionally, $\mathscr{G}$ seems
        to carry a natural Hopf algebra structure. Studying various
        relations between LLT polynomials is a very
       active topic
       recently and it proved to be useful in understanding the
       combinatorial structure of LLT
    polynomials~\cite{Lee2021,HuhNamYoo2020,AbreuNigro2021,AlexanderssonSulzgruber2022,Tom2021}. We
        believe that further studies in the direction of understanding
        the algebraic structure of the pair $(\mathscr{G},\LLT)$ might bring
        better understanding of the combinatorial structure of LLT
        polynomials, and we leave this problem for future research.
      \end{remark}

As a consequence of \cref{lem:arrowrel} and its proof, we obtain two identities
expressing the LLT polynomial of a given LLT graph $G$ in terms of two
important LLT graphs, which do not have any double edges. For any subset $E
\subset E_d(G)$, we define $G^E$ and $\tilde{G}^E$ as follows:
	\begin{itemize}
		\item $V(\tilde{G}^E) = V(G^E) = V(G)$,
		\item $E_d(\tilde{G}^E) = E_d(G^E) = \emptyset$,
		\item $E_1(\tilde{G}^E) = E_1(G^E) = E_1(G) \cup E$,
		\item $E_2(\tilde{G}^E) = E_2(G) \cup \{(u,v)\mid (v,u)\in E_d\setminus E\}$, and $E_2(G^E) = E_2(G)$.
	\end{itemize}

\begin{example}
	For
	\[G = 
	\vcenter{\hbox{
			\begin{tikzpicture}[scale=0.8]
			\draw[thick,-{stealth}] (0,0) -- (0,1);
			\draw[thick] (0,1) -- (0,2);
			\draw[thick,double,red] (2,0) -- (2,2);
			\draw[thick,double,red,-{stealth}] (2,0) -- (2,1);
			\draw[thick,double] (2,0) -- (0,2);
			\draw[thick,double,-{stealth}] (2,0) -- (1,1);
			\draw[thick,-{stealth}{stealth}] (0,0) -- (1.2,0);
			\draw[thick] (1.2,0) -- (2,0);
			\draw[thick,-{stealth}{stealth}] (2,2) -- (0.8,2);
			\draw[thick] (0.8,2) -- (0,2);
			\draw[thick,-{stealth}] (2,0) -- (3,0);
			\draw[thick] (3,0) -- (4,0);
			\draw[thick,double,red] (4,0) -- (2,2);
			\draw[thick,double,red,-{stealth}] (4,0) -- (3,1);
			\end{tikzpicture}}}\]
	and $E\subset E_d(G)$ equal to the set of the red edges above, we have
	\[G^E = 
	\vcenter{\hbox{
			\begin{tikzpicture}[scale=0.8]
			\draw[thick,-{stealth}] (0,0) -- (0,1);
			\draw[thick] (0,1) -- (0,2);
			\draw[thick] (2,0) -- (2,2);
			\draw[thick,-{stealth}] (2,0) -- (2,1);
			\draw[thick,-{stealth}{stealth}] (0,0) -- (1.2,0);
			\draw[thick] (1.2,0) -- (2,0);
			\draw[thick,-{stealth}{stealth}] (2,2) -- (0.8,2);
			\draw[thick] (0.8,2) -- (0,2);
			\draw[thick,-{stealth}] (2,0) -- (3,0);
			\draw[thick] (3,0) -- (4,0);
			\draw[thick] (4,0) -- (2,2);
			\draw[thick,-{stealth}] (4,0) -- (3,1);
			\end{tikzpicture}}},
	\hspace{2.5cm}
	\tilde{G}^E = 
	\vcenter{\hbox{
			\begin{tikzpicture}[scale=0.8]
			\draw[thick,-{stealth}] (0,0) -- (0,1);
			\draw[thick] (0,1) -- (0,2);
			\draw[thick] (2,0) -- (2,2);
			\draw[thick,-{stealth}] (2,0) -- (2,1);
			\draw[thick] (2,0) -- (0,2);
			\draw[thick,-{stealth}{stealth}] (0,2) -- (1.2,.8);
			\draw[thick,-{stealth}{stealth}] (0,0) -- (1.2,0);
			\draw[thick] (1.2,0) -- (2,0);
			\draw[thick,-{stealth}{stealth}] (2,2) -- (0.8,2);
			\draw[thick] (0.8,2) -- (0,2);
			\draw[thick,-{stealth}] (2,0) -- (3,0);
			\draw[thick] (3,0) -- (4,0);
			\draw[thick] (4,0) -- (2,2);
			\draw[thick,-{stealth}] (4,0) -- (3,1);
			\end{tikzpicture}}}.\]
\end{example}

\begin{corollary} \label{cor:LLTgraphnodoubleedges}
	For any LLT graph $G$, we have
	\[\LLT(G) = \sum_{E \subseteq E_d(G)} q^{|E|}\LLT(\tilde{G}^E) = \sum_{E \subseteq E_d(G)}(q-1)^{|E|}\LLT(G^E).\]
\end{corollary}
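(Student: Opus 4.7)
The plan is to prove both identities by directly expanding the definition~\eqref{def:graphllt} of $\LLT(G)$ and splitting the product of weights over double edges in two complementary ways. The key observation is that the weight of a double edge $(u,v)\in E_d(G)$ admits the two elementary rewritings
\[
  \varphi_f(u,v) \;=\; q\,[f(u)>f(v)] \;+\; [f(v)\geq f(u)] \;=\; 1 \;+\; (q-1)\,[f(u)>f(v)].
\]
In the first rewriting, $q\,[f(u)>f(v)]$ is $q$ times the weight of a type~I edge $(u,v)$, while $[f(v)\geq f(u)]$ is the weight $\varphi_f(v,u)$ of a type~II edge $(v,u)$, i.e., the reverse of the original double edge. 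In the second rewriting, one summand is trivial and the other is $(q-1)$ times a type~I weight.

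For the first identity, I would expand $\prod_{(u,v)\in E_d(G)}\varphi_f(u,v)$ using the first rewriting, obtaining a sum over subsets $E\subseteq E_d(G)$ which record the edges that contribute the ``type~I'' summand. Substituting into~\eqref{def:graphllt} and interchanging the order of summation over $f$ and $E$, the contribution from a fixed $E$ factors as $q^{|E|}$ times the LLT polynomial of the graph obtained from $G$ by keeping all original edges unchanged, promoting each edge of $E$ to type~I, and adding the reverse of each edge in $E_d(G)\setminus E$ as a type~II edge. Matching this set by set with the definition of $\tilde{G}^E$ yields the first identity.

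For the second identity, I would run the same expansion using the second rewriting: the summand indexed by $E$ now carries a coefficient $(q-1)^{|E|}$, the edges in $E$ become type~I edges, and the remaining double edges contribute the trivial factor $1$, imposing no constraint on $f$ and hence effectively disappearing. Comparing with the definition of $G^E$ gives the second identity.

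The proof is essentially bookkeeping once the two decompositions of $\varphi_f$ are in hand, so I do not anticipate any serious obstacle; the only care needed is to verify that the two modified graphs produced by the expansion coincide edge set by edge set with $\tilde{G}^E$ and $G^E$. An alternative route via iterated application of the local transformation $\pi_{\vec{e_d}}$ from \cref{lem:arrowrel} is possible but less clean, because it naturally produces a formula involving simultaneously reversed and deleted edges rather than the stated form, so one would still need an additional inclusion-exclusion step to recover the two identities.
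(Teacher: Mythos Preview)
Your proof is correct and is essentially the same argument as the paper's: the paper introduces two auxiliary local transformations $\pi'_{\vec{e_d}}(G) = G_{\vec{e_d}\to \cev{e_2}} + qG_{\vec{e_d} \to \vec{e_1}}$ and $\pi''_{\vec{e_d}}(G) = G\setminus\{\vec{e_d}\} + (q-1)G_{\vec{e_d} \to \vec{e_1}}$, which are precisely your two rewritings of $\varphi_f$ lifted to the module $\mathscr{G}$, and then applies them to all double edges. Your closing remark slightly mischaracterizes the alternative route: the paper does not use the original $\pi_{\vec{e_d}}$ from \cref{lem:arrowrel} (which, as you note, would produce a mixed formula), but rather these two new transformations tailored exactly to $\tilde{G}^E$ and $G^E$, so no extra inclusion--exclusion is needed.
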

\begin{proof}
  Note that
  \[\bigg(\prod_{\vec{e} \in E_d(G)}\pi'_{\vec{e}}\bigg)(G) = \sum_{E
      \subseteq E_d(G)}q^{|E|}\tilde{G}^E, \ \ \bigg(\prod_{\vec{e} \in E_d(G)}\pi''_{\vec{e}}\bigg)(G) = \sum_{E
      \subseteq E_d(G)}(q-1)^{|E|}G^E,\]
  where $\pi'_{\vec{e_d}}(G) = G_{\vec{e_d}\to \cev{e_2}} +
    qG_{\vec{e_d} \to \vec{e_1}}$ and $\pi''_{\vec{e_d}}(G) = G\setminus\{\vec{e_d}\} +
    (q-1)G_{\vec{e_d} \to \vec{e_1}}$ for $e_d \in E_d(G)$. Moreover, $\LLT(G) = \LLT(\pi'_{\vec{e_d}}(G))$ follows from the definition, and $\LLT(G) = \LLT(\pi''_{\vec{e_d}}(G))$ follows from 
\[ \LLT(G) = \LLT(G_{\vec{e_d}\to \cev{e_2}}) + q\LLT(G_{\vec{e_d} \to
    \vec{e_1}}) = \LLT(G\setminus\{\vec{e_d}\}) +
    (q-1)\LLT(G_{\vec{e_d} \to \vec{e_1}})\]
since we have that $\LLT(G_{\vec{e_d}\to
  \cev{e_2}}) = \LLT(G\setminus\{\vec{e_d}\})-\LLT(G_{\vec{e_d}\to
  \vec{e_1}})$.
  \end{proof}

The definition of LLT cumulants of $r$-colored tuples of skew-shapes
generalizes naturally to the definition of LLT cumulants of $r$-colored LLT
graphs.

\begin{definition}
	We say that $(G,f)$ is an \emph{$r$-colored LLT graph} if $G$ is an LLT graph and $f\in V(G) \to [1..r]$ is a surjective coloring of vertices of $G$ such that both endpoints of edges in $E_1(G)\cup E_2(G)$ have the same color. For any subset $B \subset [1..r]$, we define the vertex set $V_B := \{v \in V(G)\colon f(v) \in B\}$ and for any subset $V' \subset V(G)$, we define $G|_V$ as the subgraph of $G$ obtained by restricting its set of vertices to $V'$. Then, we define the \emph{LLT cumulant} of an $r$-colored LLT graph $(G,f)$ as the $q$-partial cumulant $\ka^{(q)}_{[1..r]}(\uu)$ for the family defined by
	\[ u_B := \LLT(G|_{V_B}).\]
      \end{definition}

	Observe that the first equation in
        \cref{cor:LLTgraphnodoubleedges} is, in fact, a special case
        of a more general formula:
        \begin{corollary}
          \label{cor:LLTgraphnodoubleedgesPartition}
          For any set-partition $\pi \in \PPP([1..r])$, one has
	\begin{equation} \label{eq:LLTPartition}
		\prod_{B \in \pi}\LLT(G|_{V_B}) = \sum_{E \subseteq
		E_d(G)}\LLT(\tilde{G}^E)\prod_{B \in \pi}q^{|E_B|},
	\end{equation}
	where $E_B \subset E$ is the subset of edges with both endpoints in
	$B$.
      \end{corollary}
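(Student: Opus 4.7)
The plan is to reduce both sides to sums of LLT polynomials of graphs of a uniform shape, using that in an $r$-colored LLT graph the coloring constraint forces every cross-block edge to be a double edge (type I and II edges are monochromatic by definition). Let $G_\pi$ denote the subgraph of $G$ obtained by deleting all cross-block double edges. Then $G_\pi = \bigsqcup_{B\in\pi} G|_{V_B}$ as LLT graphs, so by multiplicativity of $\LLT$ on disjoint unions (which is immediate from the definition of the LLT polynomial as a weighted sum over colorings) one has $\prod_{B\in\pi}\LLT(G|_{V_B}) = \LLT(G_\pi)$. Writing $E_d^{\mathrm{intra}} := E_d(G_\pi)$, the first identity of \cref{cor:LLTgraphnodoubleedges} applied to $G_\pi$ yields
\[\prod_{B\in\pi}\LLT(G|_{V_B}) = \sum_{F\subseteq E_d^{\mathrm{intra}}} q^{|F|}\,\LLT(\widetilde{G_\pi}^{\,F}).\]

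To match this with the right-hand side of \eqref{eq:LLTPartition}, set $E_d^{\mathrm{cross}} := E_d(G)\setminus E_d^{\mathrm{intra}}$. Every $E\subseteq E_d(G)$ decomposes uniquely as $E=F\sqcup E'$ with $F\subseteq E_d^{\mathrm{intra}}$ and $E'\subseteq E_d^{\mathrm{cross}}$, and since cross-block edges do not lie in any single block of $\pi$, one has $\prod_{B\in\pi} q^{|E_B|} = q^{|F|}$. The claim therefore reduces to proving, for each fixed $F\subseteq E_d^{\mathrm{intra}}$, the identity
\[\LLT(\widetilde{G_\pi}^{\,F}) = \sum_{E'\subseteq E_d^{\mathrm{cross}}} \LLT(\widetilde{G}^{F\sqcup E'}). \qquad (\star)\]

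I plan to prove $(\star)$ by induction on $|E_d^{\mathrm{cross}}|$, iterating the three-term identity $\LLT(H_{\vec{e_d}\to\cev{e_2}}) + \LLT(H_{\vec{e_d}\to\vec{e_1}}) = \LLT(H\setminus\{\vec{e_d}\})$ used inside the proof of \cref{cor:LLTgraphnodoubleedges}. Fix $e^*\in E_d^{\mathrm{cross}}$ and pair summands $E'$ and $E'\cup\{e^*\}$: the graphs $\widetilde{G}^{F\sqcup E'}$ and $\widetilde{G}^{F\sqcup E'\cup\{e^*\}}$ agree everywhere except on $e^*$ (which appears as reversed type II and as type I, respectively), so their sum equals $\LLT(\widetilde{G\setminus\{e^*\}}^{\,F\sqcup E'})$. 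Since $G_\pi$ is insensitive to the removal of any cross-block double edge, the inductive hypothesis applied to $G\setminus\{e^*\}$ then closes the step; the base case $E_d^{\mathrm{cross}}=\emptyset$ is trivial since then $G = G_\pi$ and both sides of $(\star)$ collapse to $\LLT(\widetilde{G}^{\,F})$. The main technical obstacle is precisely the bookkeeping in this inductive step, namely verifying that the iterated pairing really yields the version of $(\star)$ for $G\setminus\{e^*\}$ with the correct family of subsets on the right and the correct graph on the left; this is routine once one notes that the three-term identity is strictly local in $e^*$ and that $\widetilde{G_\pi}^{\,F}$ by construction ignores cross-block double edges.
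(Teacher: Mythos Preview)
Your proof is correct and follows essentially the same approach as the paper's. The paper packages the argument as a single product of local transformations: it applies $\pi'_{\vec{e}}$ (the $q$-weighted split) to every intra-block double edge and a new transformation $\pi'''_{\vec{e}}(H) := H_{\vec{e}\to\vec{e_1}} + H_{\vec{e}\to\cev{e_2}}$ (your three-term identity, adding back a deleted edge with total coefficient $1$) to every cross-block double edge, then observes that $\LLT$ is invariant under both. Your version does the same thing in two stages---first invoking \cref{cor:LLTgraphnodoubleedges} on $G_\pi$ for the intra-block edges, then peeling off cross-block edges one at a time by induction---but the underlying identities and the resulting bookkeeping are identical.
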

      \begin{proof}
        Formula \eqref{eq:LLTPartition} is proved similarly to
        \cref{cor:LLTgraphnodoubleedges}, so we only sketch the proof.
        Let $G_\pi := \bigoplus_{B\in\pi} G|_{V_B}$, where $G_1\oplus G_2$ is a disjoint union of the LLT graphs
        $G_1$ and $G_2$. Then $\prod_{B \in \pi}\LLT(G|_{V_B}) =
        \LLT(G_\pi)$. Note that $G_\pi$ is obtained from $G$ by removing all the double edges connecting vertices with colors lying in different
blocks of $\pi$. Consider two local transformations:
$\pi'_{\vec{e}}(G_\pi)$ for $\vec{e} \in E_d(G_\pi)$ and $\pi'''_{\vec{e}}(G_\pi) := (G_\pi)_{\vec{e} \to
  \vec{e_1}}+(G_\pi)_{\vec{e} \to \cev{e_2}}$ for any orientation $\vec{e}$ of $e
\notin E(G_\pi)$. Notice that
\[\bigg(\prod_{\vec{e} \in E_d(G_\pi) \atop \vec{\tilde{e}} \in
    E_d(G)\setminus E_d(G_\pi)}\pi'_{\vec{e}}\pi'''_{\vec{\tilde{e}}}\bigg)(G_\pi) = \sum_{E
    \subseteq E_d(G)}\tilde{G}^E \prod_{B \in \pi}q^{|E_B|}.\]
Finally, recall that $\LLT$ is invariant under taking the local
transformation $\pi'_{\vec{e}}$ and notice that $\LLT(G)=\LLT(\pi'''_{\vec{e}}(G))$ for any orientation $\vec{e}$ of $e
\notin E(G)$. This finishes the proof.
        \end{proof}

In the following, we prove that the LLT cumulant of the $r$-colored LLT
graph $(G,f)$ can be naturally expressed as a sum of LLT
polynomials of so-called $f$-connected graphs.

\begin{definition} \label{def:kaconnected}
	Let $(G,f)$ be an $r$-colored LLT graph. We say
        that it is
        \emph{$f$-connected} if the graph $G_f$
        obtained from $G$ by identifying vertices of the same color
        is connected. In other words, the graph $G$ is $f$-connected
        if for every pair $i,j \in [1..r]$, there exists
        $i=i_0\neq i_1 \cdots \neq i_k=j\in [1..r]$ and vertices
        $v_0,\dots,v_k \in V(G)$ colored by $i_0,\dots,i_k$
        respectively such that $v_{i-1}$ is connected to $v_i$ for
        every $1 \leq i \leq k$. 
      \end{definition}

      Note that when $f$ is a bijection
        then the graph $G$ is $f$-connected if and only if $G$ is
        connected, and if $f$ is a $1$-coloring then the condition of
        being $f$-connected is empty (it is always satisfied). We have the following
        combinatorial interpretation of an LLT cumulant of an $r$-colored LLT
        graph $(G,f)$.

\begin{theorem} \label{thm:cumasconnected}
	Let $(G,f)$ be an $r$-colored LLT graph and denote $E_d = E_d(G)$. Then:
	\begin{align}
		\ka_{\LLT}(G,f)(q+1) =
          \sum\limits_{\substack{E\subseteq E_d \\ G^{E} \text{
          $f$-connected}}} q^{|E|-r+1}\LLT(G^{E})(q+1).  \label{eq:cumasconnected}
\end{align}
\end{theorem}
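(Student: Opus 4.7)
The plan is to expand $\ka_{\LLT}(G,f)$ using its definition as a $q$-partial cumulant, substitute $q \to q+1$ so that the prefactor $(q-1)^{1-r}$ becomes $q^{1-r}$, and then replace each product $\prod_{B\in\pi}\LLT(G|_{V_B})$ by a sum over subsets of double edges. I want the $G^E$-version of \cref{cor:LLTgraphnodoubleedgesPartition} rather than the $\tilde G^E$-version stated there, which follows by applying the local transformation $\pi''_{\vec{e}}(H) = H\setminus\{\vec{e}\} + (q-1)H_{\vec{e}\to\vec{e_1}}$ (used in the proof of \cref{cor:LLTgraphnodoubleedges}) to each double edge of $G_{\pi} := \bigoplus_{B\in\pi} G|_{V_B}$. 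The key observation is that type~I and type~II edges of an $r$-colored LLT graph are monochromatic, so only double edges of $G$ can cross blocks of $\pi$, and $G_\pi$ differs from $G$ exactly by deletion of those cross-block double edges. This yields
\[
\prod_{B\in\pi}\LLT(G|_{V_B}) = \sum_{E \subseteq E_d^{\pi}}(q-1)^{|E|}\LLT(G^E),
\]
where $E_d^\pi$ denotes the double edges of $G$ with both endpoints in a single block of $\pi$.

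Substituting $q \to q+1$ turns $(q-1)^{|E|}$ into $q^{|E|}$, and interchanging the order of summation over $\pi$ and $E$ gives
\[
\ka_{\LLT}(G,f)(q+1) = q^{1-r} \sum_{E \subseteq E_d(G)} q^{|E|}\LLT(G^E)(q+1) \sum_{\pi \,:\, E\subseteq E_d^\pi} (-1)^{|\pi|-1}(|\pi|-1)!.
\]
For fixed $E$, let $\sigma_E \in \PPP([1..r])$ be the partition of colors induced by $E$, i.e., two colors lie in the same block iff they are joined by a chain of edges of $E$ after identifying vertices of the same color. The condition $E \subseteq E_d^\pi$ is then exactly $\pi \geq \sigma_E$ in the refinement order, so the inner sum equals a sum over set-partitions of the $|\sigma_E|$ blocks. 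The standard partition-lattice identity $\sum_{\tau \in \PPP([1..k])}(-1)^{|\tau|-1}(|\tau|-1)! = [k=1]$, a consequence of Möbius inversion on $\PPP([1..k])$, collapses the sum to the terms with $|\sigma_E|=1$. Since the remaining type~I and type~II edges of $G^E$ are monochromatic and thus become loops after contracting vertices of the same color, the condition $|\sigma_E|=1$ is precisely the $f$-connectedness of $G^E$; combining the prefactors $q^{1-r}$ and $q^{|E|}$ gives $q^{|E|-r+1}$, which is the claimed formula.

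The two points that need the most care are establishing the $G^E$-version of the product formula (since only the $\tilde G^E$-version is recorded in the text) and matching, for fixed $E$, the set of contributing $\pi$ with the coarsenings of $\sigma_E$ in the partition lattice. Both reduce to the local-transformation calculus developed in \cref{subsec:LLTGraphs} combined with the monochromaticity of non-double edges; the Möbius identity is standard and I would cite rather than reprove it.
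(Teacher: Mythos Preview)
Your proposal is correct and follows essentially the same approach as the paper: expand the cumulant, apply the second identity of \cref{cor:LLTgraphnodoubleedges} to each $G_\pi$ (after the shift $q\to q+1$ so that $(q-1)^{|E|}$ becomes $q^{|E|}$), and collapse the resulting sum over $\pi\ge\sigma_E$ via the Möbius identity on the partition lattice. The only cosmetic difference is that the paper phrases the last step as ``fix a graph $G'=G_\sigma^E$ with $\sigma$ minimal and compute its coefficient'', whereas you interchange the two sums directly; since $(G_\pi)^E=G^E$ whenever $E\subseteq E_d^\pi$, the two formulations coincide, and your observation that monochromaticity of type~I/II edges forces $|\sigma_E|=1$ to be equivalent to $f$-connectedness of $G^E$ is exactly the paper's argument.
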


\cref{thm:cumasconnected} essentially shows the structure behind the,
a priori, algebraic definition of a cumulant: it kills all
$f$-disconnected summands in the expansion and preserves the
$f$-connected ones. Furthermore, we note that we formulate the
statement with the polynomials evaluated at $q+1$ to highlight the
$\LLT$-positivity of the cumulant after the shift $q\longmapsto q+1$:
an operation that is also relevant in the context of the $e$-positivity phenomenon (see \cref{sec:epositivity}).





\begin{proof}[Proof of \cref{thm:cumasconnected}]
	We have
	\begin{align*}
\ka_{\LLT}(G,f)(q+1) :&= q^{1-r}\sum_{\pi\in\PPP([1..r])} (-1)^{|\pi|-1}(|\pi|-1)! \prod_{B\in\pi}\LLT(G|_{V_B}) \\
	&= q^{1-r}\sum_{\pi\in\PPP([1..r])} (-1)^{|\pi|-1}(|\pi|-1)!\LLT\left(G_\pi\right), \end{align*}
	where we recall that $G_\pi := \bigoplus_{B\in\pi} G|_{V_B}$.
	
	By \cref{cor:LLTgraphnodoubleedges}, for each $B\in\mathcal{P}([1..r])$, we get
	\begin{equation}
          \label{eq:kumu2}
          \LLT\left(G_\pi\right)(q+1) = \sum_{E \subseteq
            E_d(G_\pi)}q^{|E|}\LLT(G_\pi^E)(q+1)
        \end{equation}
        so that
        	\begin{align}
          \label{eq:kumu1}\ka_{\LLT}(G,f)(q+1) = q^{1-r}\sum_{\pi\in\PPP([1..r])} (-1)^{|\pi|-1}(|\pi|-1)! \sum_{E \subseteq
            E_d(G_\pi)}q^{|E|}\LLT(G_\pi^E)(q+1). \end{align}
	
	Now consider an LLT graph $G' = G_\sigma^E$ for some
        $\sigma\in\PPP([1..r])$ and $E\subseteq
        E_d(G_\sigma)$. For a fixed $G'$ of this form, pick $\sigma$ to be
        minimal, i.e.~ pick $\sigma$ such that for every block $B \in
        \sigma$, the graph $G'|_{V_B}$ is $f$-connected. Note that $G'$ is $f$-connected if
        and only if $\sigma = \{[1..r]\}$. We compute the
        contribution of the graph $G'$ to the RHS of the formula
        \eqref{eq:kumu1}.
	
	Note that $G'$ appears in a summand corresponding to a
        partition $\pi$ if and only if for every $B\in\sigma$, there
        exists $C\in\pi$ such that $B\subseteq C$. This is known as
        the containment relation $\sigma \leq \pi$ on the set of
        set-partitions. Therefore, we have
	\begin{align*}[\LLT(G')(q+1)]\ka_{\LLT}(G,f) &= q^{|E|-r+1}\sum_{\sigma\le\pi} (-1)^{|\pi|-1}(|\pi|-1)! = q^{|E|-r+1}\delta_{\sigma,\{[1..r]\}}. \end{align*}
	The last equality comes from the well-known fact that
        $(-1)^{|\pi|-1}(|\pi|-1)!$ is equal to the M\"{o}bius function
        $\mu(\pi,\{[1..r]\})$ on the poset of set-partitions
        $(\PPP([1..r]),\leq)$ and the sum of the
        M\"{o}bius function $\mu(\pi,\{[1..r]\})$ over the
        interval $\pi \in [\sigma,\{[1..r]\}]$ is non-zero (and equal to $1$)
        only if $\sigma = \{[1..r]\}$ (see, e.g., \cite{Weisner1935}). This finishes the proof as
        $\sigma = \{[1..r]\}$ if and only if $G'$ is $f$-connected.
      \end{proof}

      \subsection{Various positivity results}
      \label{subsec:Positivity}

      The purpose of this section is to derive various
      combinatorial formulae for an LLT cumulant of an $r$-colored
      LLT graph and proving certain positivity results. We start by a
      quick review on $G$-inversion polynomials and their different
      interpretations.

      \subsubsection{$G$-inversion polynomials and Tutte polynomials}
      \label{subsubsec:Tutte}

      Let $G$ be a multigraph (with possible multiedges and
      multiloops, as previously) on the set of vertices $[1..r]$. We
      say that $T$ is a \emph{spanning tree} of $G$ if it is a
      subgraph of $G$ with the same set of vertices $[1..r]$ and it is a tree (it is connected and has no cycles). A pair $(i,j)$ is called
      an \emph{inversion} of a spanning tree $T$ of $G$ if $i,j \neq 1$ and
      if $i$ is an ancestor of $j$ and $i>j$. An inversion $(i,j)$ is a \emph{$\ka$-inversion} if, additionally, $j$ is adjacent to the parent of $i$ in $G$. A $G$-inversion polynomial is a generating function of spanning trees of $G$ counted with respect to the number of $\ka$-inversions.

Let $\tilde{G}$ be a graph obtained
from $G$ by replacing all multiple edges by single ones.
We recall that for any subset $B \subset V$ we denote the number of
edges linking vertices in $B$ by $e_B$. The \emph{$G$-inversion
  polynomial} is given by
\begin{equation}
\label{eq:GesselSagan'}
\mathcal{I}_G(q) = q^{\text{number of loops in } G}\sum_{T \subset \tilde{G}}
q^{\ka(T)}\prod_{\{i,j\} \in T}[e_{\{i,j\}}(G)]_q,
\end{equation}
where the sum runs over all spanning trees of $\tilde{G}$, 
\begin{equation}
\label{eq:ka(T)}
\ka(T) = \sum_{\{i,j\} - \ka-\text{inversion in }
  T}e_{\{\text{parent}(i),j\}} (G),
\end{equation}
and we use the standard notation $[n]_q := \frac{q^n-1}{q-1} =
1+q+\cdots+q^{n-1}$. As we already mentioned in the introduction, $\mathcal{I}_G(q) =
\Tu(1,q)$, where $\Tu(x,y)$ is the Tutte polynomial of $G$ (a
classical graph invariant introduced by Tutte in~\cite{Tutte1954}):
\begin{equation}
\label{eq:DefTutte}
\Tu_G(x,y) = \sum_{H \subset G}(x-1)^{c(H)-1}\ (y-1)^{|E(H)|-|V|+c(H)}.
\end{equation}
The summation index above runs over all (possibly disconnected) subgraphs of $G$, $c(H)$ denotes the
number of connected components of $H$, and $E(H)$ is the set of edges
of $H$. In fact, we have the following lemma, which is essentially due
to Gessel~\cite{Gessel1995} and Josuat-Vergès~\cite{Josuat-Verges2013} (see
also~\cite{Dolega2019} for treating both frameworks in the setting of multigraphs).

\begin{lemma}
  \label{lem:Tut}
  Let $G$ be a multigraph with the vertex set $V = [1..r]$ and let
  $\uu$ be a family indexed by subsets of $[1..r]$ defined as $u_B
  := q^{e_B}$ for every $B \subset [1..r]$. Then we have the following
  equalities between the generating series:
  \begin{equation}
  \mathcal{I}_G(q)  = \Tu_G(1,q) = \ka^{(q)}(\uu).
    \end{equation}
  \end{lemma}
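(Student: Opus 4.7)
The plan is to split the lemma into two independent claims: the combinatorial identity $\mathcal{I}_G(q) = \Tu_G(1,q)$, and the algebraic identity $\Tu_G(1,q) = \ka^{(q)}(\uu)$. The first is classical, established by Gessel for simple graphs and extended to multigraphs (with the $q$-numbers $[e_{\{i,j\}}(G)]_q$ appearing in \eqref{eq:GesselSagan'}) by Josuat-Verg\`es; for this part I would only check that formula \eqref{eq:GesselSagan'} agrees with the statement in those works and then cite them directly. The second identity is the one that can be proved cleanly here by M\"obius inversion on the partition lattice.

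For $\Tu_G(1,q) = \ka^{(q)}(\uu)$, I would start from \eqref{EqDefCumulants}, substitute $u_B = q^{e_B}$, and expand each factor via $q = 1 + (q-1)$:
$$q^{e_B} = \sum_{A \subseteq E_B(G)} (q-1)^{|A|},$$
where $E_B(G)$ denotes the set of edges of $G$ with both endpoints inside the block $B$ (loops at a vertex of $B$ being automatically included). Taking the product over $B \in \pi$ packages this into
$$\prod_{B \in \pi} q^{e_B} = \sum_{A \subseteq E(G),\ \pi_A \leq \pi} (q-1)^{|A|},$$
where $\pi_A$ is the partition of $V$ into connected components of the spanning subgraph $(V,A)$ (loops are invisible to $\pi_A$). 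Interchanging the order of summation yields
$$\ka^{(q)}(\uu) = (q-1)^{1-r} \sum_{A \subseteq E(G)} (q-1)^{|A|} \sum_{\pi \geq \pi_A} (-1)^{|\pi|-1}(|\pi|-1)!.$$
The coefficient $(-1)^{|\pi|-1}(|\pi|-1)!$ is the M\"obius value $\mu(\pi, \hat{1})$ on the partition lattice, so the inner sum collapses to $\delta_{\pi_A, \hat{1}}$ by the defining property of $\mu$. Only connected spanning subgraphs survive, giving
$$\ka^{(q)}(\uu) = \sum_{\substack{A \subseteq E(G)\\ (V,A)\text{ connected}}} (q-1)^{|A|-r+1}.$$
Specializing \eqref{eq:DefTutte} at $x=1$ kills every summand with $c(H) > 1$ and matches exactly this expression, proving $\Tu_G(1,q) = \ka^{(q)}(\uu)$.

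The main obstacle is the first equality $\mathcal{I}_G(q) = \Tu_G(1,q)$: the multigraph extension of Gessel's theorem with the $q$-number weights $[e_{\{i,j\}}(G)]_q$ is not immediate and requires either a delicate bijection between spanning trees (weighted by $\kappa$-inversions) and connected spanning subgraphs (weighted by $|A|-|V|+1$), or a matching deletion-contraction recurrence on both sides together with a careful treatment of loops. This is the content of the cited works of Gessel and Josuat-Verg\`es, and I would invoke those results rather than reprove them.
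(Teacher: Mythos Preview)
Your proposal is correct. The paper itself does not prove \cref{lem:Tut}: it simply attributes the result to Gessel and Josuat-Verg\`es (with \cite{Dolega2019} for the multigraph extension) and moves on. Your treatment agrees with this for the equality $\mathcal{I}_G(q)=\Tu_G(1,q)$, which you rightly identify as the substantive combinatorial step and defer to the literature.

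Where you differ is that you actually supply a self-contained argument for $\Tu_G(1,q)=\ka^{(q)}(\uu)$, whereas the paper leaves this to the references as well. Your M\"obius-inversion proof is clean and correct: the expansion $q^{e_B}=\sum_{A\subseteq E_B(G)}(q-1)^{|A|}$ and the identification of $\prod_{B\in\pi}q^{e_B}$ with a sum over edge sets $A$ satisfying $\pi_A\le\pi$ are exactly right (loops are handled automatically, since they lie in $E_{\{v\}}(G)$ and do not affect $\pi_A$), and the collapse via $\sum_{\pi\ge\pi_A}\mu(\pi,\hat{1})=\delta_{\pi_A,\hat{1}}$ leaves precisely the connected spanning subgraphs, matching the $x=1$ specialization of \eqref{eq:DefTutte}. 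So your write-up is strictly more informative than what the paper provides, while remaining fully consistent with its intended attribution.
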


      \subsubsection{Monomial positivity}
      \label{subsubsec:monomial}

      Here, we prove the following theorem implying positivity of LLT
      cumulants for arbitrary $r$-colored LLT graphs in the
      quasi-symmetric monomial basis (this is a refinement of the main result
      from~\cite{Dolega2019}):

  \begin{theorem} \label{thm:kumuMonoPos}
	Let $(G,f)$ be an $r$-colored LLT graph and denote $E_d = E_d(G)$. Then:
	\begin{align}
          \ka_{\LLT}(G,f)(q) =
          \sum\limits_{\substack{E\subseteq E_d \\ \hat{G}^{E} \text{
          $f$-connected}}} \I_{(\hat{G}^E)_f}(q)\LLT(\tilde{G}^{E})(q),  \label{eq:Tutte}
\end{align}
	where $\hat{G}^{E}$ is obtained from $\tilde{G}^E$ by removing
        all the edges of type II (i.e. $E(\hat{G}^{E}) =
        E(\tilde{G}^{E})\setminus E_2(\tilde{G}^E)$).
      \end{theorem}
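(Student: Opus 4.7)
The plan is to combine the two structural results established in the preceding discussion: \cref{cor:LLTgraphnodoubleedgesPartition}, which expands each product $\prod_{B\in\pi}\LLT(G|_{V_B})$ as a weighted sum of $\LLT(\tilde{G}^E)$ over subsets $E\subseteq E_d$, and \cref{lem:Tut}, which identifies the $q$-partial cumulant of a family of the form $(q^{e_B})_B$ with a $G$-inversion polynomial. Starting from the definition
\[\ka_{\LLT}(G,f)(q) = (q-1)^{1-r}\sum_{\pi\in\PPP([1..r])}(-1)^{|\pi|-1}(|\pi|-1)!\prod_{B\in\pi}\LLT(G|_{V_B}),\]
I would apply \cref{cor:LLTgraphnodoubleedgesPartition} termwise to replace $\prod_{B\in\pi}\LLT(G|_{V_B})$ with $\sum_{E\subseteq E_d}\LLT(\tilde{G}^E)\prod_{B\in\pi}q^{|E_B|}$, and then swap the two summations so that $E\subseteq E_d$ becomes the outer variable.

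After the interchange, the coefficient of $\LLT(\tilde{G}^E)$ is precisely the $q$-partial cumulant $\ka^{(q)}_{[1..r]}(\uu^E)$ of the family $u^E_B:=q^{|E_B|}$. To apply \cref{lem:Tut}, I would view $(|E_B|)_B$ as the edge-count data of a multigraph on the color set $[1..r]$ obtained by collapsing $\hat{G}^E$ along the coloring $f$: namely, each edge of $E$ joining vertices of colors $i,j$ contributes one edge between $i$ and $j$ in $(\hat{G}^E)_f$ (a loop if $i=j$), and each type I edge of $G$ contributes a loop. \cref{lem:Tut} then rewrites the inner cumulant as $\I_{(\hat{G}^E)_f}(q)=\Tu_{(\hat{G}^E)_f}(1,q)$.

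Finally, I would invoke the classical vanishing $\Tu_H(1,q)=0$ for any disconnected multigraph $H$, which is immediate from \eqref{eq:DefTutte} since every spanning subgraph of a disconnected $H$ has $c(\cdot)\ge 2$, killing the factor $(x-1)^{c(\cdot)-1}$ at $x=1$. Since $(\hat{G}^E)_f$ is connected exactly when $\hat{G}^E$ is $f$-connected in the sense of \cref{def:kaconnected}, only these terms survive, producing \eqref{eq:Tutte}. The main obstacle I expect is not the logical structure---which is merely a rearrangement of sums followed by an application of \cref{lem:Tut}---but the careful bookkeeping required to match the algebraic family $(q^{|E_B|})_B$ with the edge counts $(q^{e_B((\hat{G}^E)_f)})_B$ of the color-quotiented multigraph, in particular ensuring that the loops arising from type I edges of $G$ and from equal-colored type $d$ edges in $E$ are tracked consistently on both sides.
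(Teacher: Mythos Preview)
Your proposal is correct and follows essentially the same route as the paper: apply \cref{cor:LLTgraphnodoubleedgesPartition} inside the defining sum of the $q$-partial cumulant, interchange the order of summation, recognize the inner sum as $\ka^{(q)}_{[1..r]}$ of the family $u_B=q^{|E_B|}$, and invoke \cref{lem:Tut}. The only cosmetic difference is that the paper argues the vanishing for $f$-disconnected $\hat G^E$ via the spanning-tree description of $\mathcal{I}_G$ (no spanning trees in a disconnected graph), whereas you use the equivalent fact $\Tu_H(1,q)=0$ from \eqref{eq:DefTutte}; your caution about matching $|E_B|$ with $e_B((\hat G^E)_f)$ and tracking the loops coming from $E_1(G)$ is well placed and is a point the paper's proof passes over quickly.
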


      \begin{proof}

Let $\hat{G}^{E}$ be a graph obtained from $\tilde{G}^E$ by removing
        all the edges of type II and we recall that $(\hat{G}^{E})_f$
        is a graph obtained from $\hat{G}^{E}$ by identifying vertices
        of the same color, i.e. $v\sim w$ if $f(v) = f(w)$. Note that the
        vertex set of $(\hat{G}^{E})_f$ is equal to $[1..r]$ and
        $|E_B|$ in the previous formula is equal to the number of edges
        of $(\hat{G}^{E})_f$ with both endpoints belonging to $B$
        (that we denote by $e_B$ to be consistent with the previous notation).
Therefore, following \eqref{eq:LLTPartition}, we end up with the formula
\begin{align*}\ka_{\LLT}(G,f) &= (q-1)^{1-r}\sum_{E \subseteq
    E_d(G)}\LLT(\tilde{G}^E)\bigg(\sum_{\pi \in
    \PPP([1..r])}(-1)^{|\pi|-1}(|\pi|-1)!\prod_{B \in \pi}
                                q^{|E_B|}\bigg) \\
  &= \sum_{E \subseteq
  E_d(G)}\ka^{(q)}_{[1..r]}(\uu)\LLT(\tilde{G}^E),
  \end{align*}
where $u_B := q^{e_B}$. This can be rewritten as
\[ \sum_{E \subseteq
    E_d(G)}\I_{(\hat{G}^E)_f}(q)\LLT(\tilde{G}^{E})(q)\]
thanks to \cref{lem:Tut}. Finally, $\I_{(\hat{G}^E)_f}(q)=0$ whenever
$(\hat{G}^E)_f$ is not connected (because disconnected graphs have no spanning trees), which is
the very definition of being $f$-connected for $\hat{G}^{E}$. This finishes
the proof.
\end{proof}

\subsubsection{Fundamental quasisymmetric functions and
  \cref{conj:LLTSchurPos} for hooks}
\label{subsub:Fundamental}
    
    For any non-negative integer $n$ and a subset $A\subset [n-1]$, we define the \emph{fundamental quasisymmetric function} $F_{n,A}(\xx)$ to be the expression
    \[F_{n,A}(\xx) := \sum_{\substack{i_1\le\dots\le i_n \\ j\in A\Longrightarrow i_j < i_{j+1}}} x_{i_1}\dots x_{i_n}.\]
    
    We say that a tableau $T\in\SSYT(\nnu)$ of a
    sequence $\nnu$ with $|\nnu|=n$ is \emph{standard} if
    $T:\nnu \rightarrow [n]$ is a bijection, and denote that fact
    by $T\in\SYT(\nnu)$. We also define the set of
    descents $\Des(T)$ of $T$ (note that this is not the same as the
    set of descents of a tuple of skew shapes $\nnu$, which
    appeared in the definition of Macdonald polynomials) as the set of $i\in[1..n]$ such that
    $\tilde{c}(T^{-1}(i+1)) < \tilde{c}(T^{-1}(i))$.

    \vspace{5pt}

    In \cite{HaglundHaimanLoehr2005}, Haglund, Haiman and Loehr
    implicitly\footnote{instead of LLT polynomials they expanded Macdonald
      polynomials into fundamental quasisymmetric functions, but their arguments can be directly
      applied to LLT polynomials yielding \eqref{eq:lltinquasisymmetric}} proved
    the following formula for the expansion of LLT polynomials in the fundamental quasisymmetric functions.
    
    \begin{theorem}[\cite{HaglundHaimanLoehr2005}]
    	For a sequence of skew shapes $\nnu$ with $|\nnu| = n$, we have
    	\begin{equation} \label{eq:lltinquasisymmetric}
    	\LLT(\nnu) = \sum\limits_{T\in\SYT(\nnu)}
        q^{\inv(T)} F_{n,\Des(T)}(\xx).
    	\end{equation}
      \end{theorem}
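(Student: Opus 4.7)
The plan is to prove \eqref{eq:lltinquasisymmetric} by a standardization argument, grouping the semistandard fillings appearing in the definition \eqref{eq:LLT-def} of $\LLT(\nnu)$ according to an associated standard filling, and then recognizing each inner sum as a fundamental quasisymmetric function. For $T \in \SSYT(\nnu)$ with $|\nnu|=n$, I would define the standardization $\mathrm{std}(T) \in \SYT(\nnu)$ as follows: for each value $v$ appearing in $T$, list the cells with entry $v$ in order of strictly increasing shifted content $\tilde{c}$, and assign them consecutive labels immediately following those used for cells of smaller value. This tiebreak, which uses the same shifted content governing attacking pairs, is what allows the semistandardness and the inversion statistic to be controlled simultaneously by one consistent choice.

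First I would verify that $\mathrm{std}(T) \in \SYT(\nnu)$. Inside a single component $\la^i/\mu^i$, cells of $T$ sharing a value $v$ form a horizontal strip, and ordering them by increasing shifted content coincides with the left-to-right order within each row, so the rows of $\mathrm{std}(T)$ are strictly increasing; since values along any column of $T$ are already strictly increasing, column-strictness is automatic. Second, I would check that $\inv(\mathrm{std}(T)) = \inv(T)$. If $(\square,\square')$ is an attacking pair with $T(\square) > T(\square')$, then every label assigned to cells of value $T(\square)$ exceeds every label assigned to cells of value $T(\square')$, so $\mathrm{std}(T)(\square) > \mathrm{std}(T)(\square')$ and the pair remains an inversion. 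Conversely, suppose $(\square,\square')$ is an inversion of $\mathrm{std}(T)$; if $T(\square) = T(\square')$, then by the definition of the standardization, $\mathrm{std}(T)(\square) > \mathrm{std}(T)(\square')$ would force $\tilde{c}(\square) > \tilde{c}(\square')$, contradicting the attacking condition $0 < \tilde{c}(\square') - \tilde{c}(\square) < \ell$; hence $T(\square) > T(\square')$ and the pair is already an inversion of $T$.

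To conclude, I would describe the fiber $\mathrm{std}^{-1}(S)$ for a fixed $S \in \SYT(\nnu)$. An SSYT $T$ satisfies $\mathrm{std}(T) = S$ precisely when the values $i_j := T(S^{-1}(j))$ form a weakly increasing sequence with $i_j < i_{j+1}$ whenever $j \in \Des(S)$: weak monotonicity is forced because labels are assigned in increasing order of value, and strictness at a descent is forced because equal consecutive values would be standardized in increasing shifted-content order, contradicting $\tilde{c}(S^{-1}(j+1)) < \tilde{c}(S^{-1}(j))$. Since $\xx^T = x_{i_1}\cdots x_{i_n}$ for such $T$ and $\inv(T) = \inv(S)$ throughout the fiber by the previous step, summing over the fiber yields exactly $q^{\inv(S)} F_{n,\Des(S)}(\xx)$, and \eqref{eq:lltinquasisymmetric} follows by reorganizing \eqref{eq:LLT-def}. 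The main obstacle is verifying the compatibility between the tiebreak rule and the inversion statistic; once the shifted content is identified as the correct ordering for both semistandardness and attacking, the rest is routine combinatorial bookkeeping.
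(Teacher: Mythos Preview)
Your standardization argument is correct and is precisely the approach of \cite{HaglundHaimanLoehr2005}; the paper itself does not supply a proof here but simply cites that reference, noting that the formula is implicit in their treatment of Macdonald polynomials. One small point of presentation: your claim that cells of a fixed value in a single component ``form a horizontal strip, and ordering them by increasing shifted content coincides with the left-to-right order within each row'' is slightly imprecise (the strip may span several rows), but all you actually need is the local statement that adjacent cells in a row with equal value are ordered correctly, and that is clear since the right neighbor has shifted content larger by $\ell$.
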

  
  What is more, we can obtain a similar result in our language and notation.
  
  	\begin{corollary}
  		For any $r$-colored tuple $(\nnu,f)$ of size $n$ and for any set partition $\pi\in\PPP([1..r])$, we have
  		\begin{equation} \label{eq:prodoflltinquasisymmetric}
  		\prod_{B \in \pi}\LLT((\nnu,f)^B) = \sum\limits_{T\in\SYT(\nnu)}
  		q^{\inv_{\pi}(T)} F_{n,\Des(T)}(\xx),
  		\end{equation}
  		where $\inv_{\pi}(T)$ denotes the number of inversions in $T$
  		with both boxes in the same block of $\pi$.
  	\end{corollary}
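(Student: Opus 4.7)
The plan is to adapt the standardization argument behind \eqref{eq:lltinquasisymmetric} to the product setting, replacing $\inv$ by its partition-refined version $\inv_\pi$. I would proceed in three steps.

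First, I would establish the semistandard identity
\[
\prod_{B \in \pi}\LLT((\nnu,f)^B) = \sum_{T \in \SSYT(\nnu)} q^{\inv_\pi(T)} \xx^T.
\]
Expanding each factor via \eqref{eq:LLT-def}, the product becomes a sum over tuples $(T_B)_{B \in \pi}$ with $T_B \in \SSYT((\nnu,f)^B)$. Since $\pi$ partitions $[1..r]$ and the subsets $f^{-1}(B)$ partition $[1..\ell(\nnu)]$, such a tuple is precisely a filling $T \in \SSYT(\nnu)$. The $q$-weight contributed is $\sum_{B \in \pi} \inv(T|_B)$, and I claim this equals $\inv_\pi(T)$. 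The nontrivial point is that the ``attacks'' relation inside the sub-tuple $(\nnu,f)^B$ (which uses the shifted content with $\ell$ equal to the sub-tuple's length) must agree with the restriction of the attacks relation in $\nnu$ to boxes of that sub-tuple. A short case analysis on the content difference shows this: an attacking pair has contents differing by $0$ or $1$, so only the relative order of the original component indices enters the condition, and this order survives the passage to any sub-tuple regardless of the change in $\ell$.

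Second, I would apply the standardization map $\mathrm{std}\colon \SSYT(\nnu) \to \SYT(\nnu)$ underlying the HHL proof of \eqref{eq:lltinquasisymmetric}: one processes values $1, 2, \dots$ in increasing order and relabels boxes carrying the same value by consecutive integers in order of increasing shifted content. Two properties matter. (i) For each $U \in \SYT(\nnu)$, the monomial sum $\sum_{\mathrm{std}(T) = U} \xx^T$ equals $F_{n, \Des(U)}(\xx)$; this is the classical fact underlying \eqref{eq:lltinquasisymmetric}. (ii) Standardization preserves inversions pairwise: a pair $(\square, \square')$ is an inversion of $T$ if and only if it is an inversion of $\mathrm{std}(T)$, the only delicate case being $T(\square) = T(\square')$, in which the tie-breaking rule by increasing shifted content precisely matches the attack direction. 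Since $\inv_\pi$ counts inversions subject to a condition that depends only on the pair of boxes (namely, that their components lie in the same block of $\pi$), property (ii) upgrades for free to $\inv_\pi(T) = \inv_\pi(\mathrm{std}(T))$.

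Combining both steps gives
\[
\prod_{B \in \pi}\LLT((\nnu,f)^B) = \sum_{U \in \SYT(\nnu)} q^{\inv_\pi(U)} \sum_{\mathrm{std}(T)=U} \xx^T = \sum_{U \in \SYT(\nnu)} q^{\inv_\pi(U)} F_{n, \Des(U)}(\xx),
\]
which is the desired identity. The only genuinely new point compared with \eqref{eq:lltinquasisymmetric} is the attacks-preservation check in step one, and I expect this to be the main---though still routine---obstacle; everything else is a direct refinement of the HHL standardization procedure, with $\inv_\pi$ simply disregarding those inversions not permitted by the set-partition $\pi$.
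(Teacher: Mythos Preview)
Your proposal is correct and follows precisely the approach the paper intends: the paper's proof is the single sentence ``The result is a straightforward application of the arguments used in~\cite{HaglundHaimanLoehr2005}'', and what you have written is exactly that application spelled out---the semistandard product identity combined with the HHL standardization argument, noting that $\inv_\pi$ is preserved because standardization preserves inversions pairwise. Your attacks-preservation check in step one is the only point requiring care, and your content-difference analysis handles it correctly.
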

    \begin{proof}
    	The result is a straightforward application of the arguments used in ~\cite{HaglundHaimanLoehr2005}.
    \end{proof}
      
      Applying the same proof as in \cref{thm:kumuMonoPos} to \eqref{eq:prodoflltinquasisymmetric}, we obtain the following result (see also
      \cite[Section 5]{Dolega2019} for an analogous argument applied
      to Macdonald cumulants):
      
    \begin{theorem}
      \label{theo:LLTFundamental}
    	Let $(\nnu,f)$ be an $r$-colored sequence of skew shapes
        of size $n$. Then:
    	\begin{equation} \label{eq:lltcumulantinquasisymmetric}
    	\ka_{\LLT}(\nnu,f)(q) = \sum\limits_{T \in \SYT(\nnu)} \sum \I_{(\widehat{G_{\nnu}}^{E^T})_f}(q)F_{n,\Des(T)}(\xx),
    	\end{equation}
    	where the second sum runs over all subsets $E^T\subseteq E_d(G_{\nnu})$ for which $\widehat{G_{\nnu}}^{E^T}$ is $f$-connected and $T(i) > T(j)$ whenever $(i,j)\in E(\widehat{G_{\nnu}}^{E^T})$.
    \end{theorem}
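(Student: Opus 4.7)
The plan is to repeat the argument of \cref{thm:kumuMonoPos} verbatim, but with the identity \eqref{eq:prodoflltinquasisymmetric} playing the role of \eqref{eq:LLTPartition}. First, I substitute \eqref{eq:prodoflltinquasisymmetric} into the definition of the $q$-partial cumulant,
\[
\ka_{\LLT}(\nnu,f)(q) = (q-1)^{1-r}\sum_{\pi \in \PPP([1..r])}(-1)^{|\pi|-1}(|\pi|-1)! \sum_{T \in \SYT(\nnu)} q^{\inv_\pi(T)} F_{n,\Des(T)}(\xx),
\]
and interchange the two summations so that each standard tableau $T \in \SYT(\nnu)$ contributes $F_{n,\Des(T)}(\xx)$ times an inner sum over set-partitions weighted by $q^{\inv_\pi(T)}$.

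Second, I use the decomposition $\inv_\pi(T) = \sum_{B\in \pi} e_B$, where $e_B$ counts the inversions of $T$ both of whose cells lie in the color set $B$. This exhibits the inner sum as a $q$-partial cumulant of the family $u_B = q^{e_B}$, which by \cref{lem:Tut} equals the $G$-inversion polynomial $\I_H(q)$ of the multigraph $H$ on $[1..r]$ whose edge (respectively loop) multiplicities encode the inversions of $T$ between distinct colors (respectively within the same color). Since each inversion of $T$ is a double edge of $G_\nnu$, this multigraph can be realized as $(\widehat{G_\nnu}^{E^T})_f$ after choosing a subset $E^T \subseteq \Inv(T)$ of double edges, and the resulting inversion polynomial rewrites as the sum $\sum_{E^T} \I_{(\widehat{G_\nnu}^{E^T})_f}(q)$ appearing in the statement.

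Third, I match the combinatorial conditions: the requirement $T(i) > T(j)$ for all $(i,j) \in E(\widehat{G_\nnu}^{E^T}) = E_1(G_\nnu) \cup E^T$ is automatic on the structural edges $E_1(G_\nnu)$ for any standard tableau, and on $E^T$ amounts exactly to $E^T \subseteq \Inv(T)$; simultaneously, disconnected multigraphs have no spanning trees, so their inversion polynomial vanishes, which restricts the sum to the $f$-connected graphs $\widehat{G_\nnu}^{E^T}$. The main obstacle is the faithful bookkeeping in the second step -- translating the abstract cumulant output from \cref{lem:Tut} into the explicit sum over $f$-connected subsets $E^T$ in the LLT-graph language, which requires checking carefully how the original type~I edges of $G_\nnu$ interact with the Tutte-polynomial expansion; this parallels the analogous translation carried out in the proof of \cref{thm:kumuMonoPos}.
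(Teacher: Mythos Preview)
Your overall strategy is precisely the one the paper indicates: substitute \eqref{eq:prodoflltinquasisymmetric} into the defining formula of the $q$-partial cumulant, interchange the two summations, recognise for each fixed $T$ the inner sum as $\ka^{(q)}_{[1..r]}(\uu)$ with $u_B=q^{e_B}$, and invoke \cref{lem:Tut}. Through the first half of your second step this is carried out correctly and gives
\[
\ka_{\LLT}(\nnu,f)(q)=\sum_{T\in\SYT(\nnu)}\I_{H_T}(q)\,F_{n,\Des(T)}(\xx),
\]
where $H_T$ is the multigraph on $[1..r]$ with one edge $\{f(\square),f(\square')\}$ for every inversion $(\square,\square')\in\Inv(T)$. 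This is exactly the computation the paper has in mind when it says ``applying the same proof as in \cref{thm:kumuMonoPos} to \eqref{eq:prodoflltinquasisymmetric}''.

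The gap is in the last sentence of your second step, where you assert that $\I_{H_T}(q)$ ``rewrites as the sum $\sum_{E^T}\I_{(\widehat{G_{\nnu}}^{E^T})_f}(q)$'' over \emph{all} $f$-connected $E^T\subseteq\Inv(T)$. This identity is false. Take $\nnu=((1),(1),(1))$, $f=\id$, and the tableau $T$ with $T(\square_i)=4-i$, so that $\Inv(T)=E_d(G_{\nnu})$ consists of all three double edges. Then $H_T=K_3$ and $\I_{K_3}(q)=\Tu_{K_3}(1,q)=q+2$, whereas summing $\I_{(\widehat{G_{\nnu}}^{E^T})_f}(q)$ over the four connected spanning subgraphs of $K_3$ (three two-edge paths and $K_3$ itself) gives $3\cdot 1+(q+2)=q+5$.

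What the argument actually produces is a \emph{single} graph per tableau, namely the one with $E^T=\Inv(T)$; equivalently $H_T=(\widehat{G_{\nnu}}^{\Inv(T)})_f$ up to the $E_1$-loops (which become loops under $f$ and are harmless). Your third step is entirely correct for this specific $E^T$: the condition $T(i)>T(j)$ on $E_1(G_{\nnu})$ is automatic for standard tableaux, on $E^T$ it says $E^T\subseteq\Inv(T)$, and the $f$-connectedness restriction is redundant since $\I$ vanishes on disconnected multigraphs. The mismatch with the literal ``sum over all subsets $E^T$'' phrasing in the statement is an imprecision in the paper's formulation rather than a missing idea in your method; you should not try to manufacture a genuine sum over subsets out of \cref{lem:Tut}, because none arises.
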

    
    In \cite{Dolega2019}, we were able to find an explicit formula for
    the coefficients of Schur symmetric functions indexed by
    \emph{hooks}, i.e.,~ partitions of the form $(k,1^{n-k})$, in
    Macdonald cumulants, thanks to the arguments
    from~\cite{HaglundHaimanLoehr2005}. Here, we will use a very nice
    theorem of Egge, Loehr and
    Warrington~\cite{EggeLoehrWarrington2010} which gives a
    combinatorial description of Schur coefficients of any symmetric
    function when given an expansion in fundamental quasisymmetric
    functions.

    \begin{theorem}[\cite{EggeLoehrWarrington2010}] \label{thm:quasitoschurs}
    	Suppose that
    	\[\sum\limits_{\la\vdash n} c_\la s_\la =
          \sum\limits_{\alpha\models n} d_\alpha F_{n,A(\alpha)},\]
        where $A(\alpha) = (\alpha_1,\alpha_1+\alpha_2,\dots,\sum_{i=1}^{\ell(\alpha)-1}\alpha_i)$.
    	Then we have $c_{(k,1^{n-k})} = d_{(k,1^{n-k})}$ for all $1\le k\le n$.
    \end{theorem}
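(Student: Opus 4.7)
The plan is to reduce the theorem to a uniqueness statement for standard Young tableaux with a prescribed descent set. The starting point is the classical Schur expansion in the fundamental quasisymmetric basis,
\[ s_\lambda = \sum_{T \in \SYT(\lambda)} F_{n, \Des(T)}, \]
valid for any $\lambda \vdash n$. Substituting this on the left-hand side of the hypothesis and matching coefficients (using that $\{F_{n,S} : S \subseteq [n-1]\}$ is a basis of the degree-$n$ component of $\QSym$) yields, for every composition $\alpha \models n$,
\[ d_\alpha = \sum_{\lambda \vdash n} c_\lambda \cdot \bigl|\{T \in \SYT(\lambda) : \Des(T) = A(\alpha)\}\bigr|. \]
Specializing to $\alpha = (k, 1^{n-k})$, for which $A(\alpha) = \{k, k+1, \dots, n-1\}$, it suffices to prove that among all partitions of $n$, only the hook $(k, 1^{n-k})$ admits a SYT with this descent set, and that such a tableau is unique.

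To establish this combinatorial claim, I would analyze the placement of consecutive entries in any $T \in \SYT(\lambda)$ with $\Des(T) = \{k, k+1, \dots, n-1\}$. The absence of a descent at $i$ places $i+1$ in a row of index at most that of $i$, while a descent at $i$ forces $i+1$ into a strictly higher row. Starting from $1$ at position $(1,1)$, iterating the no-descent condition for $i = 1, \dots, k-1$ pins the entries $1, 2, \dots, k$ to the bottom row; by the row-increasing property they must occupy columns $1, \dots, k$ in that row, in order. Applying the descent conditions for $i = k, \dots, n-1$ forces the rows occupied by $k+1, k+2, \dots, n$ to form a strictly increasing sequence of indices starting from $2$. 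Because every entry exceeding $k$ lies strictly above the bottom row, and each $k+j$ must sit in row $\geq 1+j$, the sole occupant of row $1+j$ is $k+j$ itself; this both forces $\lambda = (k, 1^{n-k})$ and uniquely determines $T$.

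Combining the two steps gives $d_{(k,1^{n-k})} = c_{(k,1^{n-k})} \cdot 1 = c_{(k,1^{n-k})}$, as desired. The main subtlety, though not a serious obstacle, is the careful pairing of the ``no descent'' and ``descent'' constraints to simultaneously pin down both the shape of $\lambda$ and the location of every entry. Once this bookkeeping is performed, the resulting SYT is immediately the unique ``canonical'' hook tableau with first row $1,2,\dots,k$ and with $k+1, k+2, \dots, n$ filling the column above $(1,1)$.
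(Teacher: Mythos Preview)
The paper does not supply its own proof of this statement; it is quoted as a theorem of Egge, Loehr and Warrington, and the paper explicitly refers the reader to~\cite{EggeLoehrWarrington2010} for the argument, noting that only the hook case is required. Your proposal is a correct, self-contained proof of exactly that hook case. The two ingredients you use---Gessel's expansion $s_\lambda=\sum_{T\in\SYT(\lambda)}F_{n,\Des(T)}$ and the combinatorial fact that $\{k,k+1,\dots,n-1\}$ is realized as a descent set by a unique standard Young tableau, namely the canonical tableau of the hook $(k,1^{n-k})$---are standard, and your deduction from them is sound. The full result in~\cite{EggeLoehrWarrington2010} describes $c_\lambda$ for arbitrary $\lambda$ via a signed ``modified inverse Kostka'' rule, but for hook shapes all cancellation disappears and one is left with precisely the single-tableau count you obtain; so your direct argument is both simpler and sufficient for the paper's purposes.
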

    
The original result from~\cite{EggeLoehrWarrington2010} gives a
description of the coefficients $c_\la$ for a general $\la\vdash
n$. However, since we only need the case in the statement (i.e., when
$\la$ is a hook), we refer interested readers
to~\cite{EggeLoehrWarrington2010} for the general version, which is
slightly more complicated.

The following theorem is an immediate corollary of \cref{theo:LLTFundamental} and \cref{thm:quasitoschurs}:
    
    \begin{theorem}
      \label{theo:LLTShurHooks}
    	Let $(\nnu,f)$ be an $r$-colored sequence of skew shapes
        of size $n$. Then for any $1 \leq k \leq n$
    	\[[s_{(k,1^{n-k})}]\ka_{\LLT}(\lla) = \sum\limits_{\substack{T\in\SYT(\lla) \\ \Des(T) = \{k,k+1,\dots,n-1\}}} \sum \I_{(\widehat{G_{\nnu}}^{E^T})_f}(q),\]
    	where the second sum runs over all subsets $E^T\subseteq E_d(G_{\nnu})$ for which $\widehat{G_{\nnu}}^{E^T}$ is $f$-connected and $T(i) > T(j)$ whenever $(i,j)\in E(\widehat{G_{\nnu}}^{E^T})$.
    \end{theorem}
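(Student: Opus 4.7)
The proof is essentially a direct combination of the two results mentioned in the statement, so the plan is to just carefully check that the index set in \cref{thm:quasitoschurs} matches the descent set appearing in \cref{theo:LLTFundamental}.

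First, I would start from the expansion provided by \cref{theo:LLTFundamental}:
\[
\ka_{\LLT}(\nnu,f)(q) \;=\; \sum_{T\in\SYT(\nnu)} \Bigg(\sum_{E^T} \I_{(\widehat{G_{\nnu}}^{E^T})_f}(q)\Bigg) F_{n,\Des(T)}(\xx),
\]
where the inner sum runs over the subsets $E^T\subseteq E_d(G_{\nnu})$ specified in \cref{theo:LLTFundamental}. Grouping the terms by the subset $A = \Des(T)\subseteq [1..n-1]$ gives an expression of the form $\ka_{\LLT}(\nnu,f) = \sum_{A\subseteq [1..n-1]} d_A\, F_{n,A}$, with coefficients
\[
d_A \;=\; \sum_{\substack{T\in\SYT(\nnu)\\ \Des(T) = A}} \sum_{E^T} \I_{(\widehat{G_{\nnu}}^{E^T})_f}(q).
\]

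Next, I would convert between compositions and descent sets. For the composition $\alpha = (k,1^{n-k})$ of $n$, the definition $A(\alpha) = (\alpha_1, \alpha_1+\alpha_2, \dots, \sum_{i=1}^{\ell(\alpha)-1}\alpha_i)$ yields $A(\alpha) = \{k, k+1, \dots, n-1\}$. Hence indexing compositions by their associated subsets, the coefficient $d_{(k,1^{n-k})}$ appearing in \cref{thm:quasitoschurs} is precisely $d_{\{k,k+1,\dots,n-1\}}$.

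Finally, I would apply \cref{thm:quasitoschurs} with $c_\lambda = [s_\lambda]\ka_{\LLT}(\nnu,f)$ and the coefficients $d_A$ computed above. The theorem directly gives $[s_{(k,1^{n-k})}]\ka_{\LLT}(\nnu,f) = d_{(k,1^{n-k})}$, which is exactly the claimed formula. No obstacle is expected: this is a purely formal corollary, the only content-bearing step being the verification that $A((k,1^{n-k})) = \{k,\dots,n-1\}$.
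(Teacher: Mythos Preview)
Your proposal is correct and follows exactly the paper's approach: the paper states this theorem as an immediate corollary of \cref{theo:LLTFundamental} and \cref{thm:quasitoschurs} without further proof, and you have simply spelled out that deduction, including the one computation that $A((k,1^{n-k})) = \{k,k+1,\dots,n-1\}$.
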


      \subsubsection{e-positivity} \label{sec:epositivity}

      Let $(e_\lambda)$ be the basis of elementary symmetric functions,
      i.e.~ $e_\lambda := \prod_{i=1}^{\ell(\lambda)}e_{\lambda_i}$,
      where $e_{i} := \sum_{j_1 < \cdots <j_i}x_{j_1}\cdots x_{j_i}$ is the $i$-th \emph{elementary
        symmetric function}. $e$-positivity of a given symmetric
      function $f$ is a stronger property than Schur-positivity and it
      suggests a specific interpretation of the function $f$ in terms
      of the representation
      theory of the symmetric group, and in algebro-geometric context. This observation recently generated a
      lot of research in studying $e$-positive symmetric functions,
      and after a series of conjectures
      \cite{Bergeron2017,AlexanderssonPanova2018,GarsiaHaglundQiuRomero2019},
      it was clear that $e$-positivity of a big class of symmetric
      functions would be a consequence of $e$-positivity for
      \emph{vertical-strip LLT polynomials} after the shift $q \to q+1$, i.e. for $\LLT(\nnu)(q+1)$ where
      $(\nnu)_i = (1^{n_i+k_i})/(1^{k_i})$ for each $1 \leq i \leq
      \ell(\nnu)$ and some nonnegative integers $n_i,k_i$. An explicit combinatorial formula for the
      coefficients of vertical-strip LLT polynomials in the basis of
      elementary functions was independently conjectured in
      \cite{GarsiaHaglundQiuRomero2019,Alexandersson2021}\footnote{in
        fact, these interpretations are not identical, since the
        authors use slightly different framework in their works, but
        it is possible to show that they
        are equivalent} and shortly afterwards the positivity (without
      proving the combinatorial interpretation) was proved in
      \cite{DAdderio2020} and subsequently \cite{AlexanderssonSulzgruber2022}
      finalized the picture by proving the combinatorial
      interpretation. In the following, we reformulate this
      combinatorial interpretation in our current framework.

      Let $\nnu$ be a tuple of vertical-strips and let $G =
      G_{\nnu}$ be the associated LLT-graph. We recall that a vertex
      $v \in V(G)$ is associated with a box $\square(v) \in \nnu$ and the vertices are
      naturally labeled by the shifted contents of the corresponding
      boxes $\tilde{c}(v) := \tilde{c}(\square(v))$. Fix $E \subset
      E_d(G)$ and define $G' = G^E$. Since $G'$ is a directed graph
      (note that the condition that $\nnu$ is a tuple of vertical
      strips implies that $G'$ has only edges of type I),
      some of the vertices of $G'$ have only outgoing edges -- such
      vertices are called \emph{sources}. We define the following
      equivalence relation on the set of vertices $V(G')$: the
      vertices $v\sim w$ are in the same equivalence class if the source
      $\theta(v)$ with the smallest label from which there exists a
      directed path to $v$ is the same as the source
      $\theta(w)$ with the smallest label from which there exists a
      directed path to $w$. The partition $\lambda(G')$ is defined
      as the partition whose parts are sizes of the equivalence
      classes in this relation. See \cref{fig:eGraphs} for an
      example.

      \begin{figure}
  \includegraphics[width=\linewidth]{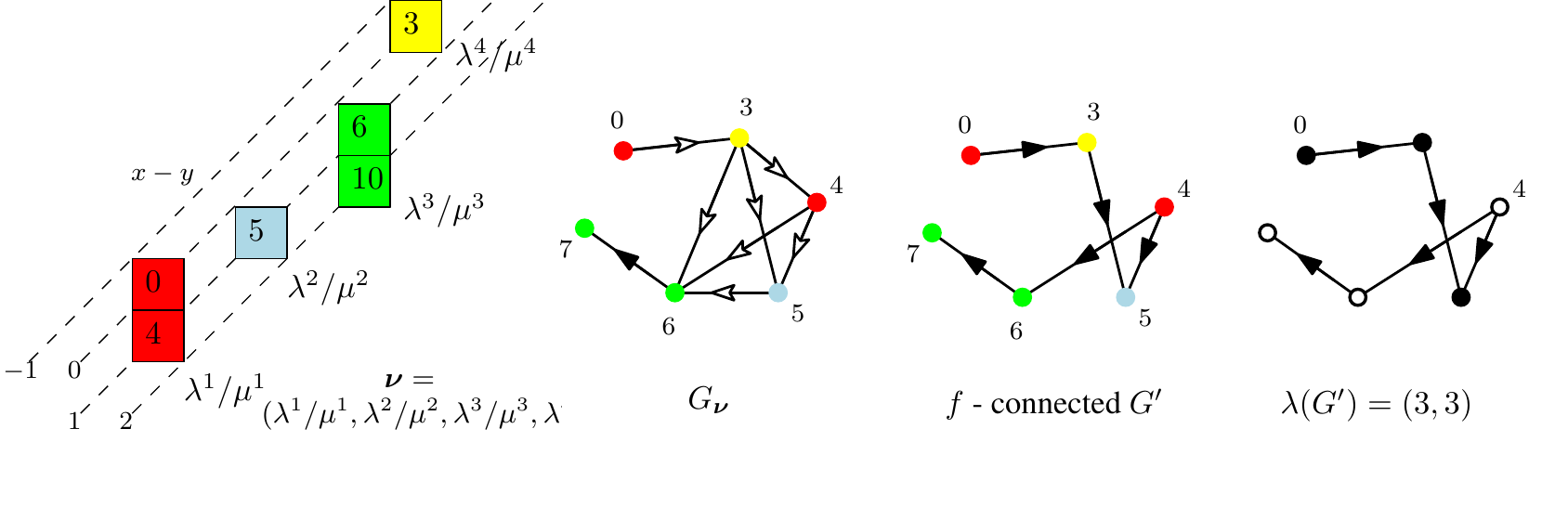}
  \caption{A tuple $\nnu$ of vertical strips and the
    associated LLT graph $G_{\nnu}$. The labels of vertices are
    the shifted contents of the corresponding boxes. Graph $G' = G^E$
  for $E = \{(0,3),(3,5),(4,5),(4,6)\}$ is $f$-connected, where
  $f(i)=i$. In the last picture, the displayed labels are the labels of
the sources of $G'$ and the corresponding two equivalence classes
$\{0,3,5\}$ and $\{4,6,7\}$ are
depicted by the whole and the empty vertices, respectively.}
\label{fig:eGraphs}
\end{figure}

\begin{theorem}{\cite{AlexanderssonSulzgruber2022}}
  \label{theo:AS}
        Let $\nnu$ be a tuple of vertical-strips and let $G =
      G_{\nnu}$ be the associated LLT-graph. Then 
      \begin{equation}
        \label{eq:AS}
        \LLT(\nnu)(q+1) = \sum_{E \subseteq
          E_d(G)}q^{|E|}e_{\lambda(G^E)}.
        \end{equation}
      \end{theorem}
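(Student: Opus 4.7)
The starting point of my plan is to exploit \cref{cor:LLTgraphnodoubleedges} with the substitution $q \mapsto q+1$. Since $G^E$ has no double edges, $\LLT(G^E)$ depends only on the $\xx$ variables, so we obtain
\[
\LLT(\nnu)(q+1) = \sum_{E \subseteq E_d(G)} q^{|E|}\LLT(G^E).
\]
Comparing with the right-hand side of the theorem, the claim reduces to
\[
\sum_{E \subseteq E_d(G)} q^{|E|}\LLT(G^E) = \sum_{E \subseteq E_d(G)} q^{|E|}e_{\lambda(G^E)},
\]
and, matching coefficients of $q^k$, to the identity
\[
\sum_{E \subseteq E_d(G),\, |E|=k}\LLT(G^E) = \sum_{E \subseteq E_d(G),\, |E|=k}e_{\lambda(G^E)}
\]
for every $k \geq 0$.

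Next, I would unfold both sides as weighted enumerations. The left-hand side counts triples $(E, f)$ with $|E|=k$ and $f\colon V(G)\to\N$ a valid coloring of $G^E$ (strict decrease along every edge), with weight $\xx^f$. The right-hand side counts pairs $(E, (S_C)_C)$ with $|E|=k$ and $S_C \subset \N$ a distinct-value subset of size $|C|$ indexed by the equivalence classes $C$ of $G^E$, with weight $\prod_C \prod_{i \in S_C} x_i$. The candidate bijection sends $(E, f)$ to $(E, (\{f(v)\colon v \in C\})_C)$ whenever $f$ is injective on every equivalence class of $G^E$.

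The tricky cases are the ``degenerate'' left-hand pairs $(E, f)$ where $f$ takes a repeated value on some class of $G^E$, and symmetrically the right-hand data $(E, (S_C)_C)$ that cannot arise from any injective $f$. I would pair off these excess contributions by a sign-reversing involution on the disjoint union of degenerate objects. The natural attempt is to toggle a single edge of $E_d(G)$: adding or removing a judiciously chosen attack edge modifies the equivalence class structure of $G^E$ in a controlled way, and the edge to toggle should be determined canonically by the degeneracy (for instance, using the smallest-labelled source of the class on which $f$ fails to be injective).

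The main obstacle lies precisely in designing this toggle. The correct edge to flip depends both on $f$ and on the source structure of $G^E$; small computations show that the partner of a pair $(E, f)$ can live under an entirely different edge set $E' \neq E$ of the same size, so the involution is not at all a local operation on $E$. Making it well-defined, involutive, and weight-preserving relies on structural features of vertical-strip LLT graphs: vertices carry a natural linear order by shifted content, and the double edges form (after relabelling) a unit-interval digraph, so the toggle can be guided by the linear order combined with the smallest-labelled source of the offending class. This is the combinatorial heart of the argument, and I expect it to require the full structural analysis carried out in \cite{AlexanderssonSulzgruber2022}; the reductions above using \cref{cor:LLTgraphnodoubleedges} are essentially formal.
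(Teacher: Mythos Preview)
First, note that the paper does not prove \cref{theo:AS}: it is quoted from \cite{AlexanderssonSulzgruber2022} (merely reformulated in the LLT-graph language) and then used as a black box in the proof of \cref{theo:e-positivity}. So there is no in-paper argument to compare your proposal against.

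Your reduction via \cref{cor:LLTgraphnodoubleedges} to the $q$-free identity $\sum_{|E|=k}\LLT(G^E)=\sum_{|E|=k}e_{\lambda(G^E)}$ is correct and natural. The gap is in the bijective scheme you set up on top of it. Your ``candidate bijection'' $(E,f)\mapsto\bigl(E,(\{f(v):v\in C\})_C\bigr)$ is not injective even on the pairs you call non-degenerate. Take $\nnu$ to be three single boxes (shifted contents $0,1,2$) and $E=\{(0,1),(0,2)\}\subset E_d(G)$; then $G^E$ has type-I edges $0\to 1$ and $0\to 2$, the unique equivalence class is $\{0,1,2\}$, and for any $3$-element set $S$ there are \emph{two} valid injective colorings with value set $S$ (vertices $1$ and $2$ are incomparable in $G^E$). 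So your ``easy'' half already overcounts the right-hand side, and the discrepancy to be cancelled is not confined to $f$-non-injective pairs on the left versus non-image tuples $(S_C)_C$ on the right; it genuinely mixes different edge sets $E$ of the same size. You are right that the hard work lives in \cite{AlexanderssonSulzgruber2022}, but the nice/degenerate dichotomy you propose does not isolate it --- the matching has to be built from scratch rather than as a correction to your candidate map. (Also, both sides are manifestly monomial-positive, so what you need is a weight-preserving bijection between the two sides rather than a sign-reversing involution.)
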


      In the following, we show that the vertical-strip LLT
      cumulants preserve $e$-positivity, which refines \cref{theo:AS},
      but, most importantly, shows that $e$-positivity of
      vertical-strip LLT polynomials
      naturally decomposes into $f$-connected components, each
      corresponding to the vertical-strip LLT cumulant. In other
      terms, heuristically, the e-positivity of vertical-strip LLT
      polynomials is ``built'' from e-positivity of LLT cumulants, which
      naturally decompose LLT polynomials from the graph-coloring
      point of view.

\begin{theorem}
  \label{theo:e-positivity}
        Let $(\nnu,f)$ be an $r$-colored tuple of vertical-strips and let $G =
      G_{\nnu}$ be the associated LLT-graph. Then 
      \begin{equation}
        \label{eq:KumuInE}
        \ka_{\LLT}(\nnu,f) (q+1) = \sum\limits_{\substack{E\subseteq E_d \\ G^E \text{ $f$-connected}}} q^{|E|+1-r}e_{\lambda(G^E)}.
        \end{equation}
      \end{theorem}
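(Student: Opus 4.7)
The strategy is to combine \cref{thm:cumasconnected} with the Alexandersson--Sulzgruber formula \cref{theo:AS}, interpreted term-by-term. Applying \cref{thm:cumasconnected} to the $r$-colored LLT graph $(G_{\nnu},f)$ gives
\[
\ka_{\LLT}(\nnu,f)(q+1) = \sum_{\substack{E \subseteq E_d \\ G^E \text{ $f$-connected}}} q^{|E|-r+1}\, \LLT(G^E)(q+1).
\]
Since $G^E$ has no double edges by construction, the polynomial $\LLT(G^E)$ is independent of $q$, and hence $\LLT(G^E)(q+1) = \LLT(G^E)$. Thus it suffices to establish the per-term identity $\LLT(G^E) = e_{\lambda(G^E)}$ for every $E \subseteq E_d$; substituting this into the display above immediately yields \eqref{eq:KumuInE}.

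To obtain the per-term identity from \cref{theo:AS}, I would proceed by induction on the size of $F$. For each subset $F \subseteq E_d(G)$, let $G(F)$ denote the LLT graph obtained from $G$ by keeping only the double edges in $F$ and discarding the others. Then $(G(F))^E = G^E$ for every $E \subseteq F$, so comparing \cref{theo:AS} applied to $G(F)$ with the second identity of \cref{cor:LLTgraphnodoubleedges} applied to $G(F)$ yields
\[
\sum_{E \subseteq F} q^{|E|}\, \LLT(G^E) = \sum_{E \subseteq F} q^{|E|}\, e_{\lambda(G^E)}.
\]
The base case $F = \emptyset$ gives $\LLT(G^{\emptyset}) = e_{\lambda(G^{\emptyset})}$ directly, and for $|F| > 0$ all terms with $E \subsetneq F$ cancel by the inductive hypothesis, leaving $\LLT(G^F) = e_{\lambda(G^F)}$.

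The main technical hurdle is the applicability of \cref{theo:AS} to the auxiliary graphs $G(F)$: these might not be the canonical LLT graph of a vertical-strip tuple, since the double edges of $G_{\nnu'}$ are determined by the attack relation on the shifted contents of $\nnu'$. However, one can realize $G(F)$ as the LLT graph of an appropriate vertical-strip tuple by spreading the columns of $\nnu$ far enough apart so that only the attacks corresponding to edges of $F$ survive among the new shifted contents. Alternatively, and perhaps more cleanly, the per-term identity $\LLT(G^E) = e_{\lambda(G^E)}$ should be extractable directly from the sign-reversing involution used in the proof of \cref{theo:AS} in \cite{AlexanderssonSulzgruber2022}, thereby bypassing the inductive comparison entirely.
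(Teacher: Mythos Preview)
Your reduction via \cref{thm:cumasconnected} is correct, but the per-term identity $\LLT(G^E)=e_{\lambda(G^E)}$ that you then try to establish is \emph{false}. Take $\nnu$ to be four single boxes at the origin, so that $G=G_{\nnu}$ is the complete graph on vertices $v_1,v_2,v_3,v_4$ (labelled by shifted contents $0,1,2,3$) with all six edges double and no edges of type I or II. For $E=\{(v_1,v_2),(v_1,v_3),(v_2,v_4)\}$ the only source of $G^E$ is $v_1$, and every vertex is reachable from it, so $\lambda(G^E)=(4)$. However, the coefficient of $x_1x_2x_3x_4$ in $\LLT(G^E)$ counts bijections $f\colon\{v_1,\dots,v_4\}\to\{1,\dots,4\}$ with $f(v_1)>f(v_2)$, $f(v_1)>f(v_3)$ and $f(v_2)>f(v_4)$, of which there are three; the same coefficient in $e_4$ is $1$. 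Hence $\LLT(G^E)\neq e_{\lambda(G^E)}$.

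This also shows why neither of your proposed workarounds can succeed. For (a), the auxiliary graph $G(F)$ with $F=E$ above is \emph{not} the LLT graph of any vertical-strip tuple: the attack relation on shifted contents is of unit-interval type, so one cannot have $v_1$ attack $v_3$ without $v_1$ attacking $v_2$ and $v_2$ attacking $v_3$ as well. No amount of ``spreading columns apart'' realises an arbitrary $F\subseteq E_d$. For (b), since the per-term identity is simply not true, the involution of \cite{AlexanderssonSulzgruber2022} cannot possibly respect individual subsets $E$; what it yields is only the aggregate identity $\sum_{E}q^{|E|}\LLT(G^E)=\sum_{E}q^{|E|}e_{\lambda(G^E)}$. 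The paper proves the required \emph{restricted} aggregate (summing only over $f$-connected $E$) by a different mechanism: it expresses $\LLT(\nnu)(q+1)$ in two ways, once via cumulant inversion $\sum_{\pi}q^{r-|\pi|}\prod_{B\in\pi}\ka_{\LLT}((\nnu,f)^B,f|_B)(q+1)$ and once via \cref{theo:AS} after grouping the subsets $E$ by the set-partition $\pi$ recording their $f$-connected components, and then inducts on $r$ to isolate the contribution of $\pi=\{[1..r]\}$.
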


      \begin{proof}
        We recall that the $q$-partial cumulant of the family $(\uu)$
        is defined by the formula \eqref{def:QCumu}. One can invert
        this formula in order to express $u_I$ in terms of the
        $q$-partial cumulants:
        \[ u_I = \sum_{\pi \in \PPP(I)}(q-1)^{|I|-|\pi|}\prod_{B \in
            \pi}\ka_B^{(q)}(\uu).\]
        Applying this to our setting, we obtain that for any $r \geq
        1$ and for any $r$-colored tuple $(\nnu,f)$, one has
        \[ \LLT(\nnu)(q+1) = \sum_{\pi \in \PPP([1..r])}q^{r-|\pi|}\prod_{B \in
            \pi}\ka_{\LLT}((\nnu,f)^B,f|_B)(q+1),\]
        where $f|_B$ is the $|B|$-coloring of
        $(\nnu,f)^B$ obtained from $f$ by restricting it to the
        preimage of $B$, i.e., $f|_B\colon f^{-1}(B) \to B$.
        
        We prove \eqref{eq:KumuInE} by induction
        on $r$. For $r=1$, the LHS of \eqref{eq:KumuInE} is equal to
        $\LLT(\nnu)(q+1)$, while the RHS of \eqref{eq:KumuInE}
        coincides with the RHS of \eqref{eq:AS}, because every
        $1$-colored graph is trivially $f$-connected. Let
        $(\nnu,f)$ be an $r$-colored tuple of vertical-strips
        with $r >1$. Let $G' = G^{E}$ for some $E \subseteq
          E_d(G)$. Note that decomposing $G'$ into $f$-connected
          components, we find a set-partition $\pi \in \PPP([1..r])$
          such that each $f$-connected component has a vertex set $V_B
          := \{v \in V(G') \text{ colored by } b \in B\}$ for some $B
          \in \pi$. Therefore, we can rewrite \eqref{eq:AS} as follows

\[\LLT(\nnu)(q+1) = \sum_{\pi \in \PPP([1..r])}\prod_{B \in
    \pi}\bigg(\sum\limits_{\substack{E_B\subseteq E_d(G_{B}) \\
      G_B^{E_B} \text{ $f|_B$-connected}}}
  q^{|E_B|}\bigg)e_{\lambda(\bigoplus_B G_{B}^{E_B})}.\]

      Notice also that $e_{\lambda(\bigoplus_BG|_{V_B}^{E_B})} =
      \prod_{B \in \pi}e_{\lambda(G|_{V_B}^{E_B})}$, which is
      immediate from the definition of $\lambda(G')$. Indeed, the
      whole equivalence class has to be contained in the connected
      component of $G$, which is further contained in the
      $f$-connected component. Using the obvious identity
      \[ q^{r-|\pi|} = \prod_{B \in \pi}q^{|B|-1},\]
      we obtain
      \begin{align*} &\ka_{\LLT}(\nnu,f)(q+1) = \sum_{\pi \in \PPP([1..r])}q^{1-|\pi|}\prod_{B
          \in \pi}\bigg(\sum\limits_{\substack{E_B\subseteq E_d(G_{B})\colon \\
      G_B^{E_B} \text{ is $f|_B$-connected}}}
  q^{|E_B|}\bigg)e_{\lambda(\bigoplus_B G_{B}^{E_B})}\\
        -&\sum\limits_{\substack{\pi \in \PPP([1..r]) \\ \pi \neq \{[1..r]\}}} q^{r-|\pi|}\prod_{B \in
        \pi}\ka_{\LLT}((\nnu,f)^B,f|_B)(q+1) 
        = \sum\limits_{\substack{E\subseteq E_d \\ G^{E} \text{ $f$-connected}}} q^{|E|+1-r}e_{\lambda(G^E)},
      \end{align*}
      where the last equality follows from the inductive
      hypothesis, and the proof is finished.
        \end{proof}

        \section{Concluding remarks and questions}
        \label{sec:Coclude}

        We conclude by proving \cref{conj:LLTSchurPos} for some special cases and stating some more general open questions.
        
        We start by showing that \cref{conj:LLTSchurPos} holds true when
        $\ell(\nnu) = 2$.

        \begin{proposition}
          Let $\nnu = ((\la^1/\mu^1,\la^2/\mu^2),f)$ be an
          $r$-colored pair of
          skew Young diagrams. Then, for every partition $\la$ the coefficient
          \[ [s_\la]\ka_{\LLT^{\cospin}}(\nnu,f) \in
            \mathbb{Z}_{\ge 0}[q]\]
          is a polynomial in $q$ with nonnegative integer
          coefficients.
        \end{proposition}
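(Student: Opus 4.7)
The plan is to split into two cases according to the number of colors $r$. When $r=1$, the coloring $f$ is the unique surjection $[1..2]\to\{1\}$, and by \cref{def:QCumu} together with \eqref{eq:defLLTkumu} we have $\ka_{\LLT^{\cospin}}(\nnu,f) = u_{\{1\}} = \LLT^{\cospin}(\nnu)$, whose Schur-positivity is exactly the theorem of Grojnowski and Haiman.

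For $r=2$ the coloring $f$ is a bijection, so formula \eqref{EqDefCumulants} gives
\[
\ka_{\LLT^{\cospin}}(\nnu,f) = \frac{\LLT^{\cospin}(\la^1/\mu^1,\la^2/\mu^2) - \LLT^{\cospin}(\la^1/\mu^1)\cdot\LLT^{\cospin}(\la^2/\mu^2)}{q-1}.
\]
I would then record two elementary observations. First, for a single skew shape ($\ell=1$) the attack condition $0<\tilde{c}(\square')-\tilde{c}(\square)<1$ is vacuous (shifted contents are integers), so $\Inv_{\cospin}$ is empty and $\LLT^{\cospin}(\la^i/\mu^i)=s_{\la^i/\mu^i}$. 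Second, the semistandard constraints in $\nnu$ decouple for $\ell(\nnu)=2$, yielding a bijection $\SSYT(\nnu)\cong\SSYT(\la^1/\mu^1)\times\SSYT(\la^2/\mu^2)$; specialising \eqref{eq:LLT-cospin-def2} at $q=1$ then gives $\LLT^{\cospin}(\nnu)|_{q=1}=s_{\la^1/\mu^1}\,s_{\la^2/\mu^2}$.

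Combining these with the Grojnowski--Haiman expansion $\LLT^{\cospin}(\nnu)=\sum_\la c_\la(q)\,s_\la$, where $c_\la(q)\in\Z_{\geq 0}[q]$, one obtains
\[
\ka_{\LLT^{\cospin}}(\nnu,f) = \sum_\la \frac{c_\la(q)-c_\la(1)}{q-1}\,s_\la,
\]
and the proof concludes with the purely algebraic identity: if $c_\la(q)=\sum_i a_{\la,i}\,q^i$ with $a_{\la,i}\in\Z_{\geq 0}$, then
\[
\frac{c_\la(q)-c_\la(1)}{q-1} = \sum_{i\geq 1} a_{\la,i}\,(1+q+\cdots+q^{i-1}) \in \Z_{\geq 0}[q].
\]

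There is no real bottleneck in the argument: the deep input is already the Schur-positivity of $\LLT^{\cospin}$, and the passage from it to the cumulant relies only on the elementary observation that a Schur-positive polynomial in $q$, minus its specialisation at $q=1$, is automatically divisible by $q-1$ with a $\Z_{\geq 0}[q]$ quotient. In particular, this case uses none of the graph-theoretic machinery of \cref{sec:GraphCol}, but it matches the heuristic picture that $\ka_{\LLT^{\cospin}}$ captures precisely the part of $\LLT^{\cospin}(\nnu)$ that measures the ``$q$-interaction'' between the color classes of $f$ and vanishes at $q=1$.
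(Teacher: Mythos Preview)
Your proof is correct and follows essentially the same approach as the paper: both handle the $r=1$ case by invoking Grojnowski--Haiman directly, and for $r=2$ both use the identity $\LLT^{\cospin}(\la^1/\mu^1)\cdot\LLT^{\cospin}(\la^2/\mu^2)=\LLT^{\cospin}(\nnu)\big|_{q=1}$ together with the elementary fact that $(c_\la(q)-c_\la(1))/(q-1)=\sum_{i\ge 1}a_{\la,i}[i]_q\in\Z_{\ge 0}[q]$. The only cosmetic difference is that you spell out why the product identity holds (via $\LLT^{\cospin}(\la^i/\mu^i)=s_{\la^i/\mu^i}$ and the factorisation of $\SSYT(\nnu)$), whereas the paper simply states it.
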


        \begin{proof}
          We know that LLT polynomials are Schur positive, i.e
          \[ \LLT^{\cospin}(\nnu)(q) = \sum_\la c_{\la^1/\mu^1,
              \la^2/\mu^2}^\la(q)s_\la,\]
          where $c_{\la^1/\mu^1,
            \la^2/\mu^2}^\la(q)  = \sum_{i=0}^{d_{\la^1/\mu^1,
            \la^2/\mu^2}^\la}c_{\la^1/\mu^1,
            \la^2/\mu^2}^{\la;i}q^i\in \mathbb{Z}_{\ge 0}[q]$ and we know that
        \[ \LLT^{\cospin}(\la^1/\mu^1)(q)
          \LLT^{\cospin}(\la^2/\mu^2)(q) =
          \LLT^{\cospin}(\nnu)(1). \]
        Therefore, the case of $2$-coloring gives us
        \[ \ka_{\LLT^{\cospin}}(\nnu,f) = \sum_\la \sum_{i=1}^{d_{\la^1/\mu^1,
              \la^2/\mu^2}^\la}c_{\la^1/\mu^1,
            \la^2/\mu^2}^{\la;i}[i]_qs_\la.\]
        Since the LLT cumulant of $1$-colored tuple is simply an LLT
        polynomial (which is Schur positive by the result of
        Grojnowski and Haiman~\cite{GrojnowskiHaiman2007}) and there
        are no other $r$-colorings of a pair of skew partitions, the
        proof is finished.
      \end{proof}

      \begin{remark}
        Note that in this simple case, the coefficient $[s_\la]\ka_{\LLT^{\cospin}}(\nnu,f)$
        is explicit assuming that the coefficient
        $[s_\la]\LLT^{\cospin}(\nnu)$ is known. In our setting,
        this coefficient was described combinatorially in terms of
        inversions of Yamanouchi tableaux by Roberts~\cite{Roberts2014},
        which, in effect, provides also the combinatorial interpretation
        of the coefficient $[s_\la]\ka_{\LLT^{\cospin}}(\nnu,f)$.
      \end{remark}

      An explicit expression for $\LLT(\nnu)$ in the Schur basis
      exists also for $\ell(\nnu)=3$ due to
      Blasiak~\cite{Blasiak2016} but it is much more complicated and,
      as noticed by Blasiak, there are serious
      difficulties in going beyond the case $\ell(\nnu)=
      3$. Let us recover Blasiak’s result here \cite[Corollary 4.3]{Blasiak2016}, so that we can state our conjecture connected to its cumulant counterpart.
      
      Let $\nnu = (\la^1/\mu^1, \la^2/\mu^2, \la^3/\mu^3)$. Blasiak proved that

      \begin{equation} \label{eq:LLTof3}
\LLT(\nnu)(q) = \sum_\la c_{\nnu}^\la(q)s_\la, \hspace{1.5cm} \text{where} \hspace{1.5cm}
c_{\nnu}^\la(q) = \sum_{\substack{T\in\RSST(\la) \\ \Des_3'(T) = D'(\nnu) \\ \tilde{c}(\nnu) \text{ - entries of } T}} q^{\inv_3'(T)},
\end{equation}
and
\begin{itemize}
	\item $\RSST(\la)$ is the set of \emph{restricted square strict tableaux} of shape $\la$, i.e., fillings of $\la$ whose columns strictly increase upwards, rows strictly increase rightwards, and the filling of the cell $(x,y)$ is smaller by at least $3$ than that of $(x',y')$ with $x'>x$ and $y'>y$;
	\item $\Des'_3(T)$ is the multiset of pairs $(T(x,y),T(x',y'))$ with $(x,y),(x',y')\in\sh(T)=\lambda$, such that $T(x,y) - T(x',y') = 3$, and either $y > y'$ and $x \le x'$, or $x = x' +1$, $y = y' + 1$, and $T(x',y) = T(x,y) - 1$;
	\item $D'(\nnu)$ is the multiset of pairs $(\tilde{c}(x,y),\tilde{c}(x',y'))$ with $(x,y),(x',y')\in\nnu$, such that $\tilde{c}(x,y) = \tilde{c}(x',y') + 3$ and $y < y'$ and $x \le x'$;
	\item $\tilde{c}(\nnu)$ is the sequence of shifted contents of $\nnu$; and
	\item $\inv'_3(T)$ is the number of pairs $((x,y),(x',y'))$ with $(x,y),(x',y')\in\sh(T)$ with $0 < T(x,y) - T(x',y') < 3$, such that $y > y'$ and $x \le x'$.
\end{itemize}

Note that the sets $\Des_3'(T)$ and $D'(\nnu)$ are indeed multisets. For instance, for $\nnu = ((3,3,3),(1),(1))$, we have
\[D'(\nnu) =
  \{(6,3),(3,0),(3,0),(3,0),(0,-3),(0,-3),(0,-3),(-3,-6)\}.\]
The point $(3,0)$ in $D'(\nnu)$ counted with multiplicity $3$ comes
from the following pairs $(x,y),(x',y')\in\nnu$: $(2,1),(2,2) \in
\nnu_1$, $(2,1),(3,3) \in
\nnu_1$, and $(3,2),(3,3) \in
\nnu_1$.

\begin{example}
Let $\lambda^1/\mu^1 = \lambda^3/\mu^3 = (1,1)$ and $\lambda^2/\mu^2 =
(2,2)/(2)$ and consider $[s_{(3,2,1)}]\LLT(\nnu)$ for $\nnu
= (\lambda^1/\mu^1, \lambda^2/\mu^2, \lambda^3/\mu^3)$. According to
\eqref{eq:LLTof3}, it is counted by restricted square
strict tableaux of shape $(3,2,1)$ with some additional
constraints. On the left hand side of \cref{fig:Blasiak}, we show
$\nnu$ with its shifted contents and we give an example of a
restricted square
strict tableau $T$ of shape $(3,2,1)$, which satisfies the constraint
$\Des_3'(T) = D'(\nnu)$. We colored the boxes of $\la^i/\mu^i$,
therefore the pairs counting $\inv'_3(T)$ can be represented as the
edges of a graph on three vertices (which is shown on two drawings on
the right hand side).
\begin{figure}[h!]
  \includegraphics[width=\linewidth]{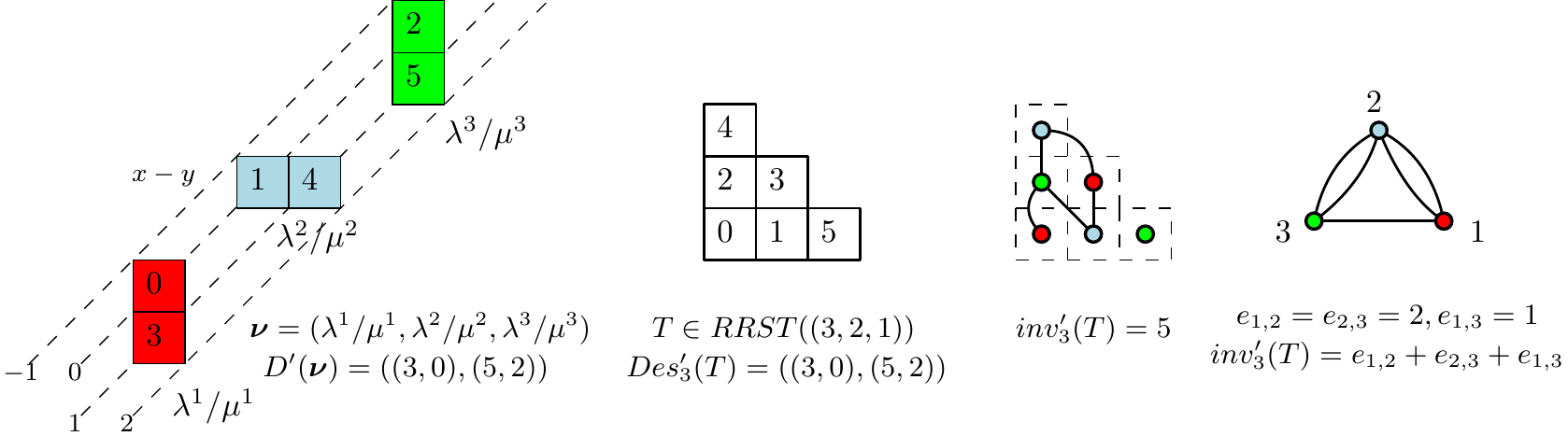}
  \caption{Restricted square strict tableau corresponding to
    Schur-expansion of an LLT polynomial of three skew shapes.}
\label{fig:Blasiak}
\end{figure}
\end{example}

Using the notation from \cref{fig:Blasiak}, let $e_{i,j}(T)$ denote the
number of pairs $(\square,\square')$ contributing to $\inv_3'(T)$ with
$T(\square)\equiv i$ and $T(\square')\equiv j$ modulo $3$, so that
\[\inv_3'(T) = e_{1,2}(T) + e_{1,3}(T) + e_{2,3}(T).\]
We believe that the following is true:
\begin{conjecture} \label{conj:auxiliaryforcumulantof3}
	For any triple of skew diagrams $\nnu = (\la^1/\mu^1,\la^2/\mu^2,\la^3/\mu^3)$ and every triple $\{i,j,k\} = \{1,2,3\}$ with $i < j$, we have
	\begin{equation} \label{eq:prop3shapes}
	\LLT(\la^i/\mu^i,\la^j/\mu^j)(q)\cdot \LLT(\la^k/\mu^k)(q) = \sum_\la \left(\sum_{\substack{T\in\RSST(\la) \\ \Des_3'(T) = D'(\beta) \\ c(\nnu) \text{ - entries of } T}} q^{e_{i,j}(T)}\right) s_\la.
	\end{equation}
      \end{conjecture}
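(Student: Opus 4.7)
The plan is to reduce \cref{conj:auxiliaryforcumulantof3} to a multi-graded refinement of Blasiak's formula~\eqref{eq:LLTof3}, and then to attack that refinement as an extension of Blasiak's bijective proof. The first step is to reformulate the left-hand side in the graph-theoretic framework of~\cref{subsec:LLTGraphs}. Writing $G := G_\nnu$ for the LLT graph of the triple, one checks directly from the definitions that the attack relation between cells of $\nnu_i$ and $\nnu_j$ (with $i < j$) is identical whether computed in the pair $(\nnu_i,\nnu_j)$ with $\ell = 2$ or in the triple $\nnu$ with $\ell = 3$: in both cases $\square \in \nnu_i$ attacks $\square' \in \nnu_j$ iff $c(\square) = c(\square')$, while $\square \in \nnu_j$ attacks $\square' \in \nnu_i$ iff $c(\square') = c(\square) + 1$. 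Consequently $G_{\nnu_{i,j}} = G|_{V_{\{i,j\}}}$ as LLT graphs, and applying~\cref{cor:LLTgraphnodoubleedgesPartition} to the partition $\pi = \{\{i,j\},\{k\}\}$ yields
\begin{equation*}
\LLT(\nnu_{i,j})\cdot\LLT(\nnu_k) \;=\; \sum_{E \subseteq E_d(G)} q^{|E_{\{i,j\}}|}\,\LLT(\tilde{G}^E),
\end{equation*}
where the factor $q^{|E_{\{k\}}|}$ vanishes trivially because no cell of $\nnu_k$ attacks another cell of $\nnu_k$. This mirrors the identity $\LLT(\nnu)(q) = \sum_E q^{|E|}\LLT(\tilde{G}^E)$ from~\cref{cor:LLTgraphnodoubleedges}, differing only by which double edges are tracked by the $q$-weight.

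Next, I would formulate a multi-graded refinement of Blasiak's formula using three formal variables $q_{1,2}, q_{1,3}, q_{2,3}$, one per unordered pair of shape indices, whose target identity is
\begin{equation*}
\sum_{E \subseteq E_d(G)} \prod_{\{a,b\}} q_{a,b}^{|E_{\{a,b\}}|}\,\LLT(\tilde{G}^E) \;=\; \sum_{\la} \biggl(\sum_{T} \prod_{\{a,b\}} q_{a,b}^{e_{a,b}(T)}\biggr) s_\la,
\end{equation*}
where on the right the inner sum runs over $T \in \RSST(\la)$ satisfying Blasiak's descent and entry conditions from~\eqref{eq:LLTof3}. Setting $q_{1,2} = q_{1,3} = q_{2,3} = q$ recovers Blasiak's formula~\eqref{eq:LLTof3} via~\cref{cor:LLTgraphnodoubleedges}, while setting $q_{i,j} = q$ and $q_{i,k} = q_{j,k} = 1$ recovers exactly \cref{conj:auxiliaryforcumulantof3}: the left-hand side becomes $\LLT(\nnu_{i,j})\cdot\LLT(\nnu_k)$ by the previous paragraph, and the right-hand side specializes to the claimed Schur expansion.

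The hard part will be the multi-graded refinement itself. Blasiak's bijective proof constructs a Schensted-like insertion $\Phi\colon\SSYT(\nnu)\to\RSST$ that sends the single statistic $\inv(S)$ to $\inv_3'(\Phi(S))$. The refinement demands that $\Phi$ be \emph{type-preserving}: since attacks in an SSYT of $\nnu$ are naturally partitioned by which unordered pair of shape indices their endpoints lie in, and inversions in an RSST are partitioned by the residues modulo $3$ of the two entries involved, one needs $\inv_{\{a,b\}}(S) = e_{a,b}(\Phi(S))$ to hold separately for every $\{a,b\}\subset\{1,2,3\}$. This appears plausible because the residue of each entry of $\Phi(S)$ is governed by the shape of the cell in $S$ from which it originates and both statistics count short-distance crossings, but verifying it rigorously requires careful bookkeeping through Blasiak's bumping steps.

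An alternative and potentially cleaner route is to bypass Blasiak's insertion entirely and establish the multi-graded identity using the $\LLT$-graph framework directly. Concretely, one could try to combine F\'eray's cyclic inclusion-exclusion principle (used in the proof of~\cref{lem:arrowrel}) with the type-refined subset-sum structure on $E_d(G)$, exploiting the Hopf-algebraic structure on $\mathscr{G}$ hinted at in~\cref{rem:LLTRel}. Either way, the central obstruction is understanding how the three types of double edges decouple in the Schur expansion, a decoupling that goes beyond the current combinatorial technology surrounding LLT positivity.
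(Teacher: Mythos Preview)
The statement you are attempting to prove is \emph{Conjecture}~\ref{conj:auxiliaryforcumulantof3}: the paper states it as an open problem and provides no proof. There is therefore no ``paper's own proof'' to compare against; the authors explicitly use the conjecture only conditionally, deriving the corollary that follows it under the assumption that it holds.

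Your proposal is honest about this. The reduction you carry out in the first two paragraphs is correct and is essentially the same manipulation the paper uses elsewhere (e.g.\ in the proof of \cref{thm:kumuMonoPos}): the identification $G_{(\nnu_i,\nnu_j)} = G|_{V_{\{i,j\}}}$ is valid because the attack relation between cells in distinct shapes depends only on the contents and the relative order of the shape indices, and your application of \cref{cor:LLTgraphnodoubleedgesPartition} with $\pi=\{\{i,j\},\{k\}\}$ is exactly right. Reformulating the conjecture as a three-variable refinement of Blasiak's identity is a natural and clarifying move.

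However, you then correctly flag the gap yourself: the multi-graded refinement of Blasiak's formula is not proved, and neither of the two routes you sketch (tracking residue-typed inversions through Blasiak's insertion, or a graph-theoretic argument via cyclic inclusion--exclusion) is carried out. The phrase ``goes beyond the current combinatorial technology'' is an accurate self-assessment. In particular, Blasiak's proof in \cite{Blasiak2016} does not proceed by a single Schensted-type insertion but through a more intricate combination of Haiman's mixed insertion, rotation operators, and $D$-graph/dual-equivalence arguments; verifying that each of these steps preserves the three separate inversion counts $e_{1,2},e_{1,3},e_{2,3}$ is a substantial undertaking, not mere bookkeeping. So what you have is a well-motivated reduction of one open problem to another of comparable difficulty, not a proof.
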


\begin{corollary}
	Assume that \cref{conj:auxiliaryforcumulantof3} holds
        true. Then \cref{conj:LLTSchurPos} holds true for all
        $r$-colored triples of skew shapes.
\end{corollary}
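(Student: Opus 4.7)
The plan is to analyze the Schur expansion of $\ka_{\LLT^{\cospin}}(\nnu,f)$ term-by-term using \eqref{eq:LLTof3} for $\LLT(\nnu)$ and \cref{conj:auxiliaryforcumulantof3} for each product $\LLT(\la^i/\mu^i,\la^j/\mu^j)\cdot\LLT(\la^k/\mu^k)$. The case $r=1$ is the theorem of Grojnowski--Haiman, while for $r\in\{2,3\}$ one enumerates the surjective colorings of $[1..3]$ and expands the $q$-partial cumulant explicitly.

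The crucial combinatorial input is the additivity identity
\[
m(\nnu)=m(\la^1/\mu^1,\la^2/\mu^2)+m(\la^1/\mu^1,\la^3/\mu^3)+m(\la^2/\mu^2,\la^3/\mu^3),
\]
already used implicitly in the proof of \cref{theo:LLT->Mac}: it is the cross-shape decomposition of the Schilling--Shimozono--White ``cospin deficit'' $|\Inv(T)\setminus\Inv_{\cospin}(T)|$. Combined with Grojnowski--Haiman Schur-positivity of $\LLT^{\cospin}(\nnu)$ and of each $\LLT^{\cospin}(\la^i/\mu^i,\la^j/\mu^j)\cdot s_{\la^k/\mu^k}$, it forces $A_T := a(T)-m(\nnu)\ge 0$ and $B^{ij}_T := e_{i,j}(T)-m(\la^i/\mu^i,\la^j/\mu^j)\ge 0$ for every $T\in\RSST(\la)$ contributing to Blasiak's sum, and moreover yields $A_T=B^{12}_T+B^{13}_T+B^{23}_T$.

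Passing to the $\LLT$ normalization via $\LLT^{\cospin}=q^{-m}\LLT$ and substituting \eqref{eq:LLTof3} and \cref{conj:auxiliaryforcumulantof3}, the Schur coefficient of $s_\la$ in $\ka_{\LLT^{\cospin}}(\nnu,f)$ becomes a sum over $T$. For the $r=2$ coloring with blocks $\{i,j\},\{k\}$ each summand simplifies to $(q-1)^{-1}(q^{A_T}-q^{B^{ij}_T})=q^{B^{ij}_T}[B^{ik}_T+B^{jk}_T]_q\in\mathbb{Z}_{\ge 0}[q]$; for $r=3$ with $f=\id$ the five-term inclusion-exclusion collapses via the elementary identity
\[
\frac{q^{a+b+c}-q^a-q^b-q^c+2}{(q-1)^2}=[a]_q[b]_q+[c]_q[a+b]_q\qquad(a,b,c\in\mathbb{Z}_{\ge 0})
\]
applied with $(a,b,c)=(B^{12}_T,B^{13}_T,B^{23}_T)$, to $[B^{12}_T]_q[B^{13}_T]_q+[B^{23}_T]_q[B^{12}_T+B^{13}_T]_q\in\mathbb{Z}_{\ge 0}[q]$. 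Summing over $T$ and $\la$ yields Schur-positivity of $\ka_{\LLT^{\cospin}}(\nnu,f)$ in all remaining cases. The main obstacle is extracting the additivity identity for $m$ cleanly from the Schilling--Shimozono--White framework and recording it as an auxiliary lemma; once this is in hand, the remainder of the argument is routine manipulation of $q$-integers.
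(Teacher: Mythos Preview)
Your argument is correct and is essentially a hand-unrolled version of the paper's proof. The paper observes that, once \cref{conj:auxiliaryforcumulantof3} is assumed, the triple $(e_{1,2}(T),e_{1,3}(T),e_{2,3}(T))$ defines a multigraph $G^T$ on three vertices for each contributing $T$, and then invokes the argument of \cref{thm:kumuMonoPos} together with \cref{lem:Tut} to obtain
\[
[s_\la]\ka_{\LLT}(\nnu,f)=\sum_T \mathcal{I}_{(G^T)_f}(q)\in\Z_{\ge 0}[q].
\]
Your explicit identity $\tfrac{q^{a+b+c}-q^a-q^b-q^c+2}{(q-1)^2}=[a]_q[b]_q+[c]_q[a+b]_q$ is precisely the $G$-inversion polynomial of the triangle with edge multiplicities $a,b,c$ (use $[c]_q[a+b]_q=[a]_q[c]_q+q^a[b]_q[c]_q$ and compare with \eqref{eq:GesselSagan'}), and your $r=2$ formula $q^{B^{ij}_T}[B^{ik}_T+B^{jk}_T]_q$ is likewise $\mathcal{I}_{(G^T)_f}(q)$ for the two-vertex quotient. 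So the two proofs compute the same quantity; the paper's packaging has the advantage of scaling uniformly to larger tuples (as in \cref{prob:Structure} and \cref{theo:Lollipop}), while yours is self-contained for $\ell(\nnu)=3$.

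One point where you are more careful than the paper: the displayed formula in the paper's proof is for $\ka_{\LLT}$, whereas \cref{conj:LLTSchurPos} is stated for $\ka_{\LLT^{\cospin}}$. You correctly note that passing to cospin replaces $e_{i,j}(T)$ by $B^{ij}_T=e_{i,j}(T)-m(\la^i/\mu^i,\la^j/\mu^j)$ and that the same argument goes through provided (i) $m(\nnu)=\sum_{i<j}m(\la^i/\mu^i,\la^j/\mu^j)$ and (ii) each $B^{ij}_T\ge 0$. For (i), note that this is exactly the verification of \eqref{eq:GraphCondition} for $e_B=\min_{T}\inv(T)$ carried out in the proof of \cref{theo:LLT->Mac} (where $e_{\{i\}}=0$ since a single skew shape has no inter-shape inversions), so your ``main obstacle'' is already recorded in the paper. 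Your deduction of (ii) from Grojnowski--Haiman Schur-positivity of $\LLT^{\cospin}(\la^i/\mu^i,\la^j/\mu^j)\cdot s_{\la^k/\mu^k}$ is a clean way to close this gap.
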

\begin{proof}
	The proof follows the same argument as the one used in \cref{thm:kumuMonoPos} to show that
	\[ [s_\la]\ka_{\LLT}(\nnu,f) = \sum_{\substack{T\in\RSST(\la) \\
			\Des_3'(T) = D'(\beta) \\ c(\nnu) \text{ - entries of }
			T}}\mathcal{I}_{(G^T)_f}(q),\]
	where $G^T$ is an $f$-colored graph whose vertices are entries of $T$
	and we connect pairs contributing to $\inv_3'(T)$.
\end{proof}

Note that the above argument works for any $r$-colored tuple of shapes
$(\nnu,f)$ and thus, \cref{conj:LLTSchurPos} suggests the
following interesting structure of the coefficients of LLT-polynomials
in the Schur expansion.

\begin{problem}
  \label{prob:Structure}
  Let $\nnu = (\la^1/\mu^1,\dots,\la^r/\mu^r)$ be an $r$-tuple of
  skew Young diagrams. Is it true that for any partition $\la$ there
  exists a class of graphs $\mathcal{G}^{\nnu}_\la$ with the set
  of vertices $[1..r]$ such that for any set-partition $\pi \in
  \PPP([1..r])$ one has
  \[ [s_\la]\prod_{B \in \pi}\LLT^{\cospin}((\nnu)^B) = \sum_{G \in
      \mathcal{G}^{\nnu}_\la}q^{\sum_{B\in \pi}e_B},\]
  where $(\nnu)^B := (\nnu,\id_{[1..r]})^B$ and
  $\id_{[1..r]}\colon [1..r] \to [1..r]$ is the identity function?
\end{problem}

Note that the affirmative answer for this problem implies
\cref{conj:LLTSchurPos} providing its combinatorial interpretation:
\[ [s_\la]\ka_{\LLT^{\cospin}}(\nnu,f) = \sum_{G \in
    \mathcal{G}^{\nnu}_\la}\mathcal{I}_{(G)_f}(q).\]

In the next section, we show that \cref{prob:Structure} has an affirmative answer in
some special cases and thus, \cref{conj:LLTSchurPos} holds true for
them.

\subsection{Unicellular LLT and melting lollipops}
\label{subsec:UniLLT}

  \begin{figure}[]
  \includegraphics[width=\linewidth]{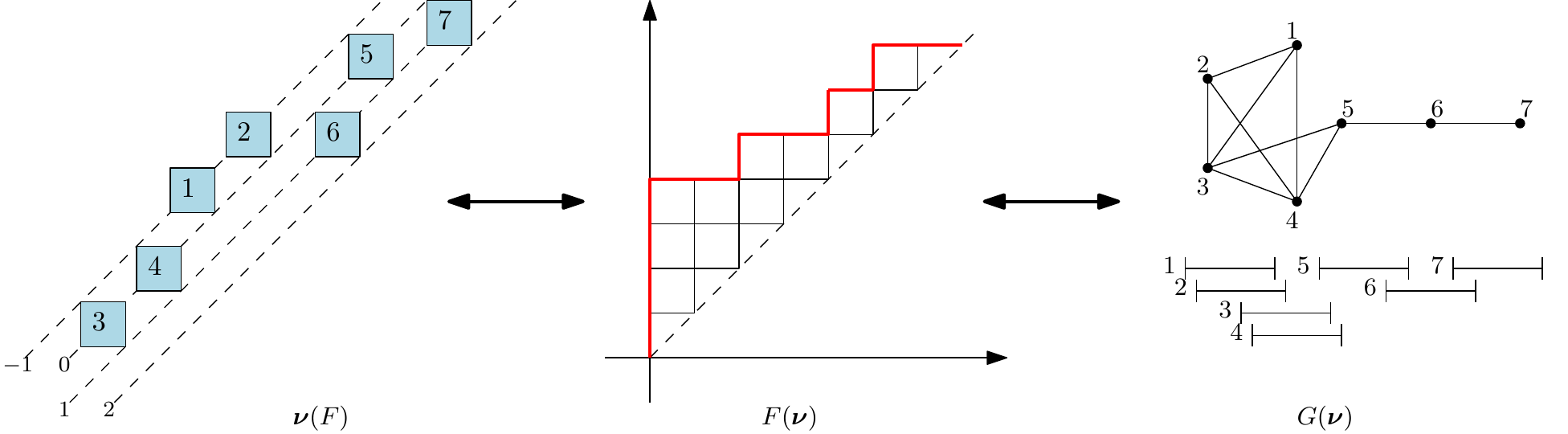}
  \caption{The correspondence between unicellular LLT polynomials,
    Dyck paths and unit interval graphs. The graph $G(\nnu)$ on
    the right is the melting lollipop graph $L^{(2)}_{(5,2)}$ and we
    display the arrangment of unit intervals which realizes it as the
    unit interval graph.}
\label{fig:Melting}
\end{figure}

  A \emph{Schr\"oder path} of length $n$ is a path from $(0,0)$ to $(n,n)$
  composed of steps $\uparrow\ = (0,1)$, $\rightarrow\ = (1,0)$, and
  $\nearrow\ = (1,1)$ (referred to as \textit{north}, \textit{east},
  and \textit{diagonal steps}, respectively), which stays above the
  main diagonal (i.e., it can touch it, but the diagonal steps lie
  strictly above it). Denote by $(i,j)$ the coordinates of the
  $1\times 1$ box with upper right vertex in $(i,j)$. It is well
  known~\cite{Haglund2008} that the vertical-shape LLT polynomials of
  homogenous degree $n$ are
  in bijection with Schr\"oder paths of length $n$: start from an
  $\ell$-tuple of vertical shapes $\nnu$ of size $n$, label its
  boxes by their shifted contents and standardize them, i.e. replace
  them (in the unique way) by labels in $[1..n]$ such that the order of new labels is the same as the order of shifted
  contents. Now construct a Schr\"oder path $F(\nnu)$ such that
  the box $(i,j)$ lies below the path if and only if the entry
  $i$ attacks the entry $j$ in $\nnu$ and the box $(i,j)$
  lies on the diagonal step if the entry $j$ lies directly below
  the entry $i$. This procedure is clearly invertible and we denote by
  $\nnu(F)$ the tuple of vertical strips associated with the
  Schr\"oder path $F$ (see the left side of~\cref{fig:Melting} and consult~\cite{Haglund2008}
  for more details).

  \vspace{5pt}
  
  A special case of a Schr\"oder path is a
  \emph{Dyck path}, that is a path with no diagonal steps. The
  corresponding $\ell$-tuple of vertical shapes $\nnu$ of size
  $n$ is a sequence of $n$ single boxes (i.e.~$\ell=n$) and its LLT
  polynomial is called \emph{unicellular}. It is remarkable that the
  LLT graphs associated with unicellular LLT polynomials are
  precisely \emph{unit interval graphs}, i.e. they can be realized as
  the intersection graphs of $n$ unit intervals on the line (see the right side of~\cref{fig:Melting}).

  Note that for every unit interval graph $G$ on $n$ vertices, one has
  \[ \LLT(G)(1) = e_1^n = \sum_{\lambda \vdash n}\sum_{T \in
      \SYT(\lambda)}s_\lambda.\]
  Therefore, it is natural to look for a statistic
  $a_G:\SYT \to \N$ such that
  \[ [s_\lambda]\LLT(G)(q) = \sum_{T \in \SYT(\lambda)}q^{a_G(T)}.\]
  Recall that the descent set $\Des(T)$ of a
  standard Young tableau $T \in \SYT(\lambda)$ is given by the values
  $i \in [1..n-1]$ for which the entry
  $i+1$ lies in $T$ in a row above the entry $i$\footnote{it is easy
    to check that this definition coincides with the previous
    definition of $\Des(T)$ given in \cref{subsub:Fundamental} in the special case of $\nnu = (\lambda)$ and $T \in \SYT(\nnu)$} and define
  \[ \overleftarrow{\Des(T)} := \{n+1-i\colon i \in \Des(T)\}.\]
  
  Let $m \geq 1,n$ be nonnegative integers and $0 \leq k \leq m-1$. A \emph{melting
  	lollipop} $L_{(m,n)}^{(k)}$ is a graph with the vertex set
  $[1..m+n]$, 
  built by joining the complete graph on vertices $[1..m]$ with the
  path on vertices $[m..m+n]$ (with edges of the form $(i,i+1)$) and
  erasing edges $(1,m),(2,m),\dots,(k,m)$. The unit interval graph
  depicted in~\cref{fig:Melting} is the melting lollipop
  $L^{(2)}_{(5,2)}$.
  
  Recently Huh, Nam
  and Yoo proved the following theorem~\cite{HuhNamYoo2020}:

  \begin{theorem}{\cite{HuhNamYoo2020}}
    \label{theo:HuhNamYoo}
    Let $\mathcal{F}_n$ be the family of unit interval graphs with $n$
    vertices such that
    \[ \LLT(G)(q) = \sum_{\lambda \vdash n}\sum_{T \in
      \SYT(\lambda)}q^{\sum_{i \in \overleftarrow{\Des(T)}}\deg^G_{\operatorname{in}}(i)}s_\lambda\]
    for each $G \in \mathcal{F}_n$ (here $\deg^G_{\operatorname{in}}(i)$
    denotes the number of edges in $G$ incoming to the vertex $i$). Then $\mathcal{F}_n$ contains
    melting lollipops and their disjoint unions.
    \end{theorem}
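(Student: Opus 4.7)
The plan is to handle the two parts of the statement separately: first, reduce arbitrary disjoint unions to connected melting lollipops by multiplicativity; second, prove the formula for a single melting lollipop by induction via LLT graph relations.

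For the disjoint-union step, suppose $G = G_1 \sqcup G_2$ where each $G_i$ satisfies the claimed formula, with vertex sets $[1..n_1]$ and $[n_1+1..n_1+n_2]$. Since $\LLT(G) = \LLT(G_1) \cdot \LLT(G_2)$, and since $\deg^G_{\operatorname{in}}(i)$ equals $\deg^{G_1}_{\operatorname{in}}(i)$ for $i \leq n_1$ and $\deg^{G_2}_{\operatorname{in}}(i)$ for $i > n_1$, expanding the product of Schur functions via the Littlewood--Richardson rule reduces the identity to a classical statement: the Robinson--Schensted-style correspondence packages a pair of standard Young tableaux of shapes $\mu, \nu$ together with an LR filling of $\lambda/\mu$ of content $\nu$ into a single $U \in \SYT(\lambda)$ in a way that splits $\Des(U)$ into $\Des(S)$ on $[1..n_1-1]$ and a shifted copy of $\Des(T)$ on $[n_1+1..n_1+n_2-1]$. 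Combined with the restriction property of $\deg^G_{\operatorname{in}}$, this yields the additivity of the statistic under the decomposition.

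For a single melting lollipop $L^{(k)}_{(m,n)}$, I would induct on $m+n$ with a secondary induction on $m-k$. Two base cases arise naturally: the path graph $P_{m+n}$ (when $k=m-1$), whose Schur expansion is classical, and the complete graph $K_m$ (when $n=0$ and $k=0$), for which the LLT polynomial is a $q$-Kostka-type sum that matches the claimed formula directly. For the inductive step I would apply the local transformation identity
\[ \LLT(G)(q) = \LLT(G \setminus \{\vec{e}\})(q) + (q-1)\,\LLT(G_{\vec{e} \to \vec{e_1}})(q), \]
which is valid for any double edge $\vec{e}$ by the argument in the proof of \cref{cor:LLTgraphnodoubleedges}, applied to the edge joining vertex $k+1$ to vertex $m$ in $G_{L^{(k)}_{(m,n)}}$. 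Deletion produces $L^{(k+1)}_{(m,n)}$, while orientation produces a graph that either is a smaller melting lollipop or reduces via further local transformations to a disjoint union of previously handled graphs. The inductive hypothesis then gives explicit Schur expansions on the right-hand side, so the task becomes matching Schur coefficients.

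The main obstacle I anticipate is exactly this matching step: one must exhibit a bijection on $\SYT(\lambda)$ that shifts the statistic $\sum_{i \in \overleftarrow{\Des(T)}} \deg^G_{\operatorname{in}}(i)$ by precisely the difference in in-degrees caused by the edge removal. Such bijections usually arise from elementary Knuth moves or from operations within dual-equivalence classes, but constructing one with the exact required weight-shift is delicate. An attractive alternative route is the chromatic-quasisymmetric viewpoint of Shareshian--Wachs, in which unicellular LLT polynomials specialize to chromatic quasisymmetric functions of unit interval graphs: if one establishes the claimed Schur expansion for the chromatic symmetric functions of melting lollipops (which admits a Gasharov-style $P$-tableau interpretation), the full $q$-deformed statement should follow from a careful refinement of that argument. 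Either route ultimately reduces the proof to an explicit tableau identity whose verification is the technical crux.
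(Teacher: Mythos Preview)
The paper does not contain a proof of this statement: \cref{theo:HuhNamYoo} is quoted verbatim from Huh, Nam and Yoo~\cite{HuhNamYoo2020} and used as a black box to deduce \cref{theo:Lollipop}. So there is no ``paper's own proof'' against which to compare your proposal; the authors simply cite the result and move on.

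As for the proposal itself, it is a plan rather than a proof. The disjoint-union step is plausible but not fully justified: the assertion that one can package $(S,T,\text{LR filling})$ into a single $U\in\SYT(\lambda)$ so that $\Des(U)$ splits cleanly into $\Des(S)$ and a shift of $\Des(T)$ needs an actual bijection, and the usual Littlewood--Richardson/RSK machinery does not hand you that descent-splitting property for free. More seriously, the inductive step for a single melting lollipop is left open by your own admission: you identify that the local-transformation recursion reduces everything to a weight-shifting bijection on $\SYT(\lambda)$, call it ``the technical crux,'' and then stop. That is exactly the content of the theorem, so the proposal does not yet constitute a proof. The alternative route you mention via chromatic quasisymmetric functions is closer to how Huh, Nam and Yoo actually argue in~\cite{HuhNamYoo2020}; if you want to pursue this, that is the reference to consult.
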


  Melting lollipops contain two extremal
  cases for which \cref{theo:HuhNamYoo} is a classical result:
  the complete graph $K_n = L_{(n,0)}^{(0)}$ and the path graph $P_n =
  L_{(1,n-1)}^{(0)} = L_{(2,n-2)}^{(0)}$.

  \begin{theorem}
    \label{theo:Lollipop}
    Let $G$ be a melting lollipop graph with $r$ vertices. Then for
    every set-partition $\pi \in \PPP([1..r])$, one has
      \[ [s_\la]\prod_{B \in \pi}\LLT^{\cospin}(G|_B) = \sum_{T \in
      \SYT(\la)}q^{\sum_{B\in \pi}e_B(G_{\operatorname{in}}^{\overleftarrow{\Des(T)}})},\]
    where $G_{\operatorname{in}}^A$ is a graph obtained from $G$ by
    removing all the edges which are not incoming to vertices in $A
    \subset V$.
    In particular, \cref{prob:Structure} and \cref{conj:LLTSchurPos} have an affirmative answer in
    this case and
    \[ [s_\la]\ka_{\LLT^{\cospin}}(G,f) = \sum_{T \in
      \SYT(\la)}\mathcal{I}_{(G_{\operatorname{in}}^{\overleftarrow{\Des(T)}})_f}(q).\]
\end{theorem}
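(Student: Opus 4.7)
The plan is to reduce the first formula to Huh–Nam–Yoo's theorem (\cref{theo:HuhNamYoo}) and then deduce the cumulant formula via the Tutte-polynomial interpretation of $q$-partial cumulants (\cref{lem:Tut}).

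I would first establish the structural lemma that for any melting lollipop $G = L_{(m,n)}^{(k)}$ and any subset $B \subseteq V(G) = [1..m+n]$, the induced subgraph $G|_B$ is a disjoint union of melting lollipops. This follows from a case analysis: the vertices $B \cap [1..m]$ form a clique with the edges to $m$ erased for elements in $B \cap [1..k]$, while the vertices $B \cap [m+1..m+n]$ decompose into maximal runs of consecutive integers, each giving a path component. If $m \in B$ and $B \cap [k+1..m-1]$ is non-empty, these pieces merge into a single melting lollipop with erasing parameter $|B \cap [1..k]|$; otherwise the clique part and the path segments form separate components, each itself a melting lollipop (possibly a single vertex or an edge).

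Next, since LLT polynomials of disjoint unions multiply, $\prod_{B \in \pi} \LLT^{\cospin}(G|_B) = \LLT\bigl(\bigsqcup_{B \in \pi} G|_B\bigr)$; here $\LLT^{\cospin}$ coincides with $\LLT$ on unicellular graphs, because the minimum inversion count over semistandard fillings of a tuple of single boxes is zero. The disjoint union $\bigsqcup_B G|_B$ is itself a disjoint union of melting lollipops and hence lies in Huh–Nam–Yoo's family $\mathcal{F}_r$. Invoking \cref{theo:HuhNamYoo}, the coefficient of $s_\lambda$ equals $\sum_{T \in \SYT(\lambda)} q^{\sigma(T)}$, where $\sigma(T) := \sum_{i \in \overleftarrow{\Des(T)}} \deg^{\sqcup}_{\mathrm{in}}(i)$ and $\deg^{\sqcup}_{\mathrm{in}}(i)$ is the in-degree of vertex $i$ in $\bigsqcup_B G|_B$. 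Since the edges of the disjoint union are precisely those edges of $G$ whose endpoints lie in the same block of $\pi$, the quantity $\deg^{\sqcup}_{\mathrm{in}}(i)$ counts the $G$-edges $(j,i)$ with $j < i$ and $j$ in the same block as $i$; regrouping $\sigma(T)$ block by block yields $\sum_{B \in \pi} e_B(G_{\mathrm{in}}^{\overleftarrow{\Des(T)}})$, which proves the first identity.

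For the cumulant formula, substitute the first identity into the definition of the $q$-partial cumulant and interchange the summations over $T \in \SYT(\lambda)$ and $\pi \in \PPP([1..r'])$. For fixed $T$, the inner sum over $\pi$ is the $q$-partial cumulant $\kappa^{(q)}_{[1..r']}(\uu^T)$ applied to the family $u^T_B := q^{e_{V_B}(G_{\mathrm{in}}^{\overleftarrow{\Des(T)}})}$. By \cref{lem:Tut}, this cumulant is exactly the $G$-inversion polynomial $\mathcal{I}_{(G_{\mathrm{in}}^{\overleftarrow{\Des(T)}})_f}(q)$ of the quotient graph on $[1..r']$ with edge multiplicities $e_B = e_{V_B}(\cdot)$, namely $(G_{\mathrm{in}}^{\overleftarrow{\Des(T)}})_f$ obtained by identifying same-colored vertices; summing over $T$ gives the stated formula and, via \cref{lem:Tut}, forces positivity.

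The delicate point is invoking Huh–Nam–Yoo on $\bigsqcup_B G|_B$ with the vertex labeling inherited from $G$: when the blocks of $\pi$ are interleaved in the ordering of $V(G)$, the inherited labeling need not be a canonical unit-interval-graph labeling of the disjoint union. Working around this—either by a careful reindexing that respects the block decomposition, or by a bijective argument grounded in the fundamental quasisymmetric expansion of \cref{eq:prodoflltinquasisymmetric} that tracks inversions within each block separately—is the core technical challenge, and it is here that the melting-lollipop structure (via its combinatorial interpretation in \cref{theo:HuhNamYoo}) is essential.
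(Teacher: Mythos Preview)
Your proposal is correct and follows essentially the same route as the paper: observe that each $G|_B$, and hence $G_\pi=\bigoplus_{B\in\pi}G|_B$, is a disjoint union of melting lollipops, apply \cref{theo:HuhNamYoo} to $G_\pi$, and rewrite the exponent via $\sum_{i\in A}\deg^{G_\pi}_{\mathrm{in}}(i)=\sum_{B\in\pi}e_B(G_{\mathrm{in}}^A)$. The paper's proof consists of exactly these two observations stated as bullet points and leaves the passage to the cumulant formula (your third paragraph, via \cref{lem:Tut}) implicit; the labeling subtlety you flag in your last paragraph is not discussed in the paper either---it simply invokes \cref{theo:HuhNamYoo} as stated, which already asserts the formula for disjoint unions of melting lollipops.
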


\begin{proof}
  It is enough to notice that
  \begin{itemize}
    \item for every set-partition $\pi \in \PPP([1..r])$ the
      graph $G_\pi := \bigoplus_{B \in \pi}G|_{V_B}$ is a disjoint union of melting
      lollipops so that
          \[ \prod_{B \in \pi}\LLT(G|_{V_B})(q) = \LLT(G_\pi)(q) = \sum_{\lambda \vdash n}\sum_{T \in
      \SYT(\lambda)}q^{\sum_{i \in \overleftarrow{\Des(T)}}\deg^{G_\pi}_{\operatorname{in}}(i)}s_\lambda;\]
  \item the identity
    \[\sum_{i \in A}\deg^{G}_{\operatorname{in}}(i) =
      |E(G_{\operatorname{in}}^A)|\]
     follows directly from the construction of $G_{\operatorname{in}}^A$.
    \end{itemize}
  \end{proof}
  
  \begin{remark}
Note that the class $\mathcal{F}_n$ is strictly smaller than the class
of unit interval graphs on $n$ vertices which can be seen already for
$n=4$: the unit interval graph $G = (V=[1..4],E)$ with $E =
\{(1,2),(2,3),(2,4),(3,4)\}$ does not belong to
$\mathcal{F}_n$. On the other hand, we were not able to find any graph
which belongs to $\mathcal{F}_n$ and is not a disjoint union of
melting lollipops, and it is tempting to conjecture that these two
classes of graphs coincide.
  \end{remark}

We finish by discussing a different approach to attacking
\cref{conj:LLTSchurPos}. One can try to find an
explicit formula for $\ka_{\LLT^{\cospin}}(\nnu,f)$ as a linear
combination of LLT polynomials with coefficients in
$\mathbb{Z}_{\ge 0}[q,q^{-1}]$. Note that Schur polynomials are a special case
  of LLT polynomials so \cref{conj:LLTSchurPos} claims that such an
  expression exists. Nevertheless, we want to stress out that LLT
  polynomials are not linearly independent so one can
  hope that some expressions are more natural and easier than others. One particular example
  where we observed such a natural combinatorial expression is the
  unicellular case corresponding to the complete graph,
  i.e., when $\nnu$ is an $r$-colored tuple of $r$ single boxes: $\la^i=(1), \mu^i = \emptyset$ for all
  $1 \leq i \leq r$. This case might seem to be trivial at first
  sight, but one can quickly convince oneself that this is
  a false impression. It turned
  out that the corresponding cumulant involves beautiful combinatorial objects such as parking functions and
  it has a form similar to the formula in the Shuffle Theorem, conjectured in~\cite{HaglundHaimanLoehrRemmelUlyanov2005} and
  proved by Carlson and Mellit~\cite{CarlssonMellit2018}. Before we
  show the formula we quickly explain what parking functions are.

    \begin{figure}[]
  \includegraphics[width=\linewidth]{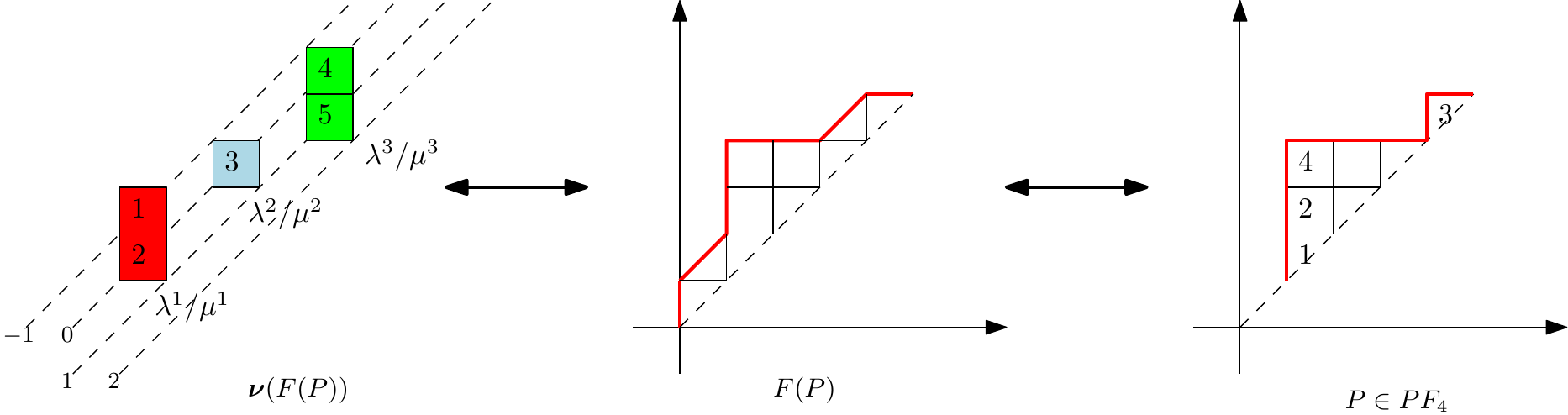}
  \caption{The correspondence between tuples of vertical shapes,
    Schr\"oder paths and parking functions.}
\label{fig:Schroder}
\end{figure}

  A \emph{parking function} $P \in \PF_{n-1}$ of size $n-1$ is a
  function $P\colon [1..n-1]\to [1..n-1]$ such that for each $i \in
  [1..n-1]$, one has $|P^{-1}(i)| \geq i$. One can represent a parking
  function by drawing a Dyck path from $(1,1)$ to $(n,n)$ and
  labeling the boxes to the right of north steps by distinct integers
  $[1..n-1]$ in such a way that the labels of boxes stacked in the same column
  are upward increasing. Starting from a parking function $P
  \in \PF_{n-1}$, convert the corresponding Dyck path of length $n-1$
  into a Schr\"oder path of length $n$ by adding steps
  $\uparrow,\rightarrow$ starting from $(0,0)$ and then replacing all the pairs of
  consecutive steps $(\rightarrow,\uparrow)$ by $\nearrow$, see the
  right side of~\cref{fig:Schroder}. The following formula
  was recently proved by the second author:                                              

  \begin{theorem}{\cite{Kowalski2020}} \label{thm:singcelldecomp}
Let $(\nnu,f) = (((1),\dots,(1)),f)$ be an $r$-colored tuple of
$r$ single boxes. Then
	\begin{equation} \label{thm:formulawithtrees}
	\ka_{\LLT}(\nnu,f)= \ka_{\LLT^{\cospin}}(\nnu,f) = \sum_{P\in \PF_{r-1}} \LLT(\nnu(F(P))),
	\end{equation}
	where we sum over all parking functions of size $r-1$.
      \end{theorem}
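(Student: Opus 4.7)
The plan is to derive the formula by specialising the graph-theoretic interpretation of LLT cumulants from \cref{thm:cumasconnected} to the unicellular case, and then identifying the resulting signed sum with an enumeration indexed by parking functions.

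In the unicellular case $\nnu=((1),\dots,(1))$ the coloring $f\colon[1..r]\to[1..r]$ is necessarily a bijection, and by relabelling we may take $f=\id$. Since constant fillings realise $\inv=0$, we have $\LLT=\LLT^{\cospin}$ on every sub-tuple $(\nnu,f)^B$, and consequently $\ka_{\LLT}(\nnu,f)=\ka_{\LLT^{\cospin}}(\nnu,f)$. The LLT graph $G_\nnu$ is the complete graph $K_r$ in which every pair $(i,j)$ with $i<j$ is joined by a double edge oriented from $i$ to $j$. Applying \cref{thm:cumasconnected} and substituting $q\mapsto q-1$ gives
\[
\ka_{\LLT}(\nnu,f)(q) \;=\; \sum_{\substack{E\subseteq \binom{[r]}{2}\\ ([r],E)\ \text{connected}}}(q-1)^{|E|-r+1}\LLT(G^E)(q),
\]
where each $G^E$ has only type-I edges, each oriented from a smaller to a larger endpoint; in particular $G^E$ is acyclic.

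The heart of the argument is to identify this signed sum with $\sum_{P\in\PF_{r-1}}\LLT(\nnu(F(P)))(q)$. I would first expand each $\LLT(G^E)$ as an explicit $\Z[q]$-linear combination of vertical-strip LLT polynomials $\LLT(\nnu(F))$ (those associated to Schr\"oder paths of length $r$), by iteratively applying the local transformations $\pi_{\vec{e}}$ and the relations in \cref{cor:LLTgraphnodoubleedges}. The right-hand side then becomes a $\Z[q]$-combination of $\LLT(\nnu(F))$'s indexed by Schr\"oder paths. It remains to prove the combinatorial identity that, for each Schr\"oder path $F$ of length $r$, the coefficient of $\LLT(\nnu(F))$ equals $|\{P\in\PF_{r-1}:F(P)=F\}|$; equivalently, all signs telescope and the surviving terms are parametrised exactly by parking functions. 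The equality $|\PF_{r-1}|=r^{r-2}=|\{\text{spanning trees of }K_r\}|$ suggests organising the cancellation around a canonically chosen spanning tree inside each connected subgraph $E$ (via Cayley).

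The main obstacle will be this last combinatorial identity: constructing the explicit telescoping cancellation, or equivalently a sign-reversing bijection on the pairs (connected subgraph, Schr\"oder expansion of its LLT polynomial) whose fixed points are parking functions. A natural alternative is to expand both sides in the fundamental quasisymmetric basis via \cref{theo:LLTFundamental} and match the resulting Tutte-polynomial weights coefficient by coefficient, indexed by descent sets of standard Young tableaux; this would convert the identity into a statement about $\I_{G'}(q)$'s summed over $f$-connected configurations, which is amenable to a deletion--contraction style induction. Either approach connects with the combinatorics behind the Shuffle Theorem of Carlsson--Mellit, in which parking functions arise precisely as the ``connected'' component of a $\nabla e_n$-expansion, strongly suggesting that the desired identity can be extracted by plethystic manipulation or by a direct bijective argument.
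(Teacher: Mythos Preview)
The paper does not contain its own proof of this theorem: it is stated with the attribution \cite{Kowalski2020} and credited to the second author's separate work, so there is no in-paper argument against which to compare your proposal.

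As for the proposal itself, it is not a proof but an outline with the central step explicitly left open. Your reduction via \cref{thm:cumasconnected} to the identity
\[
\sum_{\substack{E\subseteq\binom{[r]}{2}\\([r],E)\ \text{connected}}}(q-1)^{|E|-r+1}\LLT(G^E)(q)\;=\;\sum_{P\in\PF_{r-1}}\LLT\bigl(\nnu(F(P))\bigr)(q)
\]
is correct, as is the observation that $\LLT=\LLT^{\cospin}$ here. But from that point on you only say that you ``would'' expand each $\LLT(G^E)$ in vertical-strip LLT polynomials and that the resulting cancellation is ``the main obstacle''. That obstacle is essentially the entire theorem. The graphs $G^E$ (arbitrary connected subgraphs of $K_r$ with all edges converted to type~I) are not themselves LLT graphs of vertical-strip tuples, so the expansion you invoke is already nontrivial and no mechanism for it is given; neither the promised sign-reversing involution nor the fundamental-quasisymmetric comparison is carried out. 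The numerical coincidence $|\PF_{r-1}|=r^{r-2}=\#\{\text{spanning trees of }K_r\}$ is suggestive but only matches a $q\to 1$ count of terms and does not control the $q$-weighting or the Schr\"oder shapes $F(P)$ that must appear. In short, the setup is sound, but everything after the displayed identity is a statement of intent rather than an argument.
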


This formula gives a positive expression in terms
of vertical-shaped LLT polynomials, which are Schur-positive (by~\cite{GrojnowskiHaiman2007}) and $e$-positive after applying the shift
(by~\cite{DAdderio2020,AlexanderssonSulzgruber2022}). In particular,
\cref{thm:singcelldecomp} gives yet another proof of \cref{conj:LLTSchurPos} and also
\cref{conj:MacSchurPos} in
this special case. Although \cref{thm:singcelldecomp} might suggest
that there is a combinatorial formula expressing an LLT
cumulant as a positive combination of LLT polynomials, we were not able
to find a pattern allowing us to construct such a formula in general and
we leave this problem for further investigations in the future.

    \section*{Acknowledgments}
    MD would like to thank Erik Carlsson and Fernando Rodriguez
    Villegas for interesting discussion on possible
    connections between Macdonald cumulants and the work of Hausel,
    Letellier and Rodriguez
    Villegas~\cite{HauselLetellierRodriguezVillegas2011}.

\bibliographystyle{amsalpha}

\bibliography{biblio2015}

\end{document}